\numberwithin{equation}{section}
\theoremstyle{definition}
\newtheorem{ass}{Assumption}[section]
\newtheorem{thm}{Theorem}[section]
\newtheorem{prop}{Proposition}[section]
\newtheorem{lem}{Lemma}[section]
\newtheorem{rem}{Remark}[section]
\newtheorem{cor}{Corollary}[section]
\newtheorem{exam}{Example}[section]
\providecommand{\bra}[1]{\left( #1 \right)}
\providecommand{\seq}[1]{\left\{ #1 \right\}}
\providecommand{\abs}[1]{\left\lvert #1 \right\rvert}
\providecommand{\norm}[1]{\left\lVert #1 \right\rVert}
\renewcommand{\P}[1]{\mathbb{P}\left[ #1 \right]}
\providecommand{\E}[1]{\mathbb{E}\left[ #1 \right]}
\newcommand{\1}{\mbox{1}\hspace{-0.25em}\mbox{l}}
\newcommand{\supp}{\operatorname{supp}}
\newcommand{\Var}{\operatorname{Var}}
\newcommand{\R}{\mathbb{R}}
\newcommand{\al}{\alpha}
\newcommand{\be}{\beta}
\newcommand{\de}{\delta}
\newcommand{\De}{\Delta}
\newcommand{\ep}{\varepsilon}
\newcommand{\mA}{\mathcal{A}}
\newcommand{\mC}{\mathcal{C}}
\newcommand{\mG}{\mathcal{G}}
\newcommand{\mF}{\mathcal{F}}
\newcommand{\mN}{\mathcal{N}}
\providecommand{\eR}[1]{\mathcal{R}_n (#1, f_0)}
\providecommand{\heR}[1]{\mathcal{\widehat{R}}_n (#1, f_0)}
\providecommand{\weR}[1]{\mathcal{\widetilde{R}}_{n,s} (#1, f_0)}
\providecommand{\eQ}[1]{\mathcal{Q}_n (#1)}
\providecommand{\Psin}[1]{\Psi_n (#1)}
\providecommand{\PsiFn}[1]{\Psi_n^\mathcal{F} (#1)}
\newcommand{\Psibfn}{\bar{\Psi}_n (\hat{f}_n, \bar{f})}
\providecommand{\bemix}[1]{\beta_X (#1)}
\newcommand{\balpha}{\bm{\alpha}}
\newcommand{\bbeta}{\bm{\beta}}
\newcommand{\nmean}{\frac{1}{n} \sum_{k=0}^{n-1}}
\newcommand{\bd}{\mathbf{d}}
\newcommand{\bp}{\mathbf{p}}
\newcommand{\bt}{\mathbf{t}}
\newcommand{\bv}{\mathbf{v}}
\newcommand{\hf}{\hat{f}_n}
\newcommand{\tf}{\tilde{f}_n}
\renewcommand{\bf}{\bar{f}}
\newcommand{\hA}{\hat{A}}
\newcommand{\tA}{\tilde{A}}
\newcommand{\hM}{\hat{M}}
\newcommand{\tM}{\tilde{M}}
\newcommand{\tZ}{\tilde{Z}}
\newcommand{\X}{X_{k \De}}
\newcommand{\Y}{Y_{k \De}}
\newcommand{\I}{I_{k \De}}
\newcommand{\Z}{Z_{k \De}}
\newcommand{\ol}[1]{\overline{#1}}
\DeclareMathOperator{\Ep}{\mathbb{E}}
\def\besn#1{\begin{equation}\begin{split}#1\end{split}\end{equation}}
\title{Drift estimation for a multi-dimensional diffusion process using deep neural networks}
\author{Akihiro Oga\thanks{Graduate School of Mathematical Sciences, The University of Tokyo, 3-8-1 Komaba, Meguro-ku, Tokyo 153-8914 Japan}
\and
Yuta Koike\footnotemark[1]
}
\date{\today}
\begin{document}
\maketitle

\begin{abstract}
Recently, many studies have shed light on the high adaptivity of deep neural network methods in nonparametric regression models, and their superior performance has been established for various function classes. Motivated by this development, we study a deep neural network method to estimate the drift coefficient of a multi-dimensional diffusion process from discrete observations. We derive generalization error bounds for least squares estimates based on deep neural networks and show that they achieve the minimax rate of convergence up to a logarithmic factor when the drift function has a compositional structure. 
\smallskip

\noindent \textit{Keywords}: Deep learning; least squares estimation; minimax estimation; nonparametric drift estimation; oracle inequality.
\end{abstract}

\section{Introduction}

Let us consider a $d$-dimensional diffusion process $X=\bra{X_t}_{t \geq 0}$ satisfying the following stochastic differential equation (SDE):
\begin{align}
  dX_t &= b (X_t) dt + \Sigma (X_t) dw_t,\qquad
  X_0=\eta,
  \label{eq.mod} 
\end{align}
where $b(x) = \bra{b^{i}(x)}_{1\leq i \leq d}$ is a $d$-dimensional vector,
$\Sigma(x) = \bra{\Sigma_{ij}(x)}_{1\leq i,j \leq d}$ a $d\times d$ matrix,
$\eta$ a $d$-dimensional random vector and $w_t$ a $d$-dimensional standard Wiener process independent of $\eta$.
Our aim is to pursue nonparametric estimation for the drift function $b$, given discrete observations of $X$.


In the univariate case, nonparametric drift estimation has been studied by many researchers.
See Kutoyants \cite{ref.1}, Hoffmann \cite{ref.2}, Spokoiny \cite{Sp00}, Gobet et al. \cite{GHR04}, Dalalyan \cite{Da05}, Comte et al. \cite{ref.3}, Comte and Genon-Catalot \cite{CoGC21}, among others.
In contrast, nonparametric estimation for multi-dimensional diffusions, especially from discrete observations, was rarely studied.
There are relatively many studies in a setting with a continuous record of the process available; see Dalalyan and Rei\ss~\cite{ref.4}, Strauch \cite{St15,St16}, and Nickl and Ray \cite{NiRa20}. 
When only discrete observations are available, Schmisser \cite{ref.ES} proposed penalized least squares estimators based on B-splines and established oracle bounds for the expected empirical errors.
Another relevant work is Bandi and Moloche \cite{BaMo18} who develop point-wise consistency and asymptotic normality of Nadaraya-Watson type estimators. 
In this paper, we employ deep neural networks instead of these methods.

In the past few years, many researchers have contributed to understand theoretical advantages of deep neural network estimates for nonparametric regression models. 
Schmidt-Hieber \cite{ref.SH} considered a situation where the regression function is a composition of several functions and showed that the minimax estimation rate is not attained by wavelet series estimators but attainable (up to a logarithmic factor) by least squares estimators based on deep neural networks with the ReLU activation function. 
See also \cite{BaKo19} for a related result. 
Suzuki \cite{ref.Suzuki} showed that such estimators also achieve the minimax optimal rate over a class of functions in a Besov space and its variant with mixed smoothness for which (nonadaptive) linear estimators are suboptimal; see also \cite{SuNi19,TsSu21}. 
Imaizumi and Fukumizu \cite{ref.7} considered estimation for piece-wise smooth functions,
and they showed that estimators using deep neural networks are minimax optimal and outperform linear estimators. 
This type of result was further explored by \cite{HaSu20,ImFu20}. 
In addition, several authors showed that deep neural network methods adapt to an intrinsic dimension of covariates; see \cite{ClKl21,NaIm20,SH19} for details. 

In spite of recent progress in deep neural network methods, they are hardly applied to statistical estimation for diffusion processes, and almost nothing is known about their theoretical properties in this context. One exception is work by Ogihara \cite{Og19} who considers neural network estimates for the diffusion matrices and studies their estimation errors as misspecified models. 
Unlike this work, we focus on drift estimation and develop a theory in a nonparametric way. 
As already mentioned above, the literature on nonparametric drift estimation based on discrete observations is rather scarce, and this article also sheds new light on this area.   


In this paper, we assume that the process is discretely observed: the discrete observations $(X_0, X_{\De}, \dots, X_{n\De})$ have a sampling interval $\De>0$. 
We are mainly interested in high-frequency and long-term sampling schemes such that $\De = \De_n$ tends to 0 and $n\De$ tends to infinity, but most results are stated in finite samples. 
The statistical problem is to recover the unknown drift function $b$ over a compact set $\mathcal{K}$ from the sample $\bra{\X}_{k=0}^{n}$.
By scaling the model appropriately, it suffices to consider the case $\mathcal{K} = [0,1]^d$ without loss of generality. Also, it is enough to estimate $b$ for each component because we can combine component-wise estimators to construct an estimator for $b$ itself. 
Specifically, we consider the $\mathsf i$-th component of $b$ for a given $\mathsf i \in \seq{1, \dots, d}$ and restrict the domain of the component function to the compact set $[0, 1]^d$,
that is $f_0 := b^{\mathsf i} \1_{[0,1]^d} $, and our aim is to estimate this unknown function $f_0$. 
This is standard in the literature; see e.g.~\cite{ref.3,ref.2,ref.ES}.
We establish an oracle inequality for estimators belonging to a class of deep neural networks analogous to the one established in \cite{ref.SH} for nonparametric regression models. 
Then, under a composition assumption on $f_0$ discussed in \cite{ref.SH}, we derive generalization error bounds for least squares estimates based on deep neural networks and show that they attain the minimax rate of convergence up to a logarithmic factor. 
For the latter purpose, we develop a technique to control $\beta$-mixing coefficients uniformly over a certain class of diffusion processes, which will be of independent interest; see Lemma \ref{lemma:beta} and its proof. 


This paper is organized as follows.
In Section \ref{sec.Definitions_and_Assumptions}, we collect some assumptions and definitions used throughout the paper. 
Section \ref{sec.main} presents the main results of this paper. 
All mathematical proofs are deferred to Sections \ref{sec.proof_oracle}--\ref{sec:proof-minimax}. 

\paragraph{Notation.}
For a vector or matrix $W$, we write $|W|$ for the Frobenius norm (i.e.~the Euclidean norm for a vector), $|W|_\infty$ for the maximum-entry norm and $|W|_0$ for the number of non-zero entries. 
For vectors $x, y\in\mathbb R^d$, $\langle x, y \rangle:=x^\top y$ denotes the inner product.
We write $\|f\|_\infty := \sup_{x \in [0,1]^d} |f(x)|$ for the supremum on the compact set $[0,1]^d$,
$\lfloor x \rfloor$ for the largest integer $\leq x$,
and $\lceil x \rceil$ for the smallest integer $\geq x$. 
For a random vector $\xi$, we denote by $\mathcal L_\xi$ the law of $\xi$. 


\section{Assumptions and definitions}
\label{sec.Definitions_and_Assumptions}

\subsection{Assumptions on the diffusion process model}
\label{sec.Assumptions_Process}

We will impose the following assumptions. 
\begin{ass}
  \label{ass.SDE1}
  The functions $b$ and $\Sigma$ are globally Lipschitz continuous: There are constants $C'_{b},C'_\Sigma>0$ such that 
  \begin{align*}
    & \abs{b(x)-b(y)} \leq C'_{b} |x-y|, \\
    & \abs{\Sigma(x)-\Sigma(y)} \leq C'_\Sigma |x-y|
  \end{align*}
  for all $x, y \in \R^d$. 
  In particular, $C_{b} := \sup_{x \in [0,1]^d} \abs{b(x)}<\infty$ and $C_\Sigma := \sup_{x \in [0,1]^d} \abs{\Sigma(x)}<\infty$.
\end{ass}

\begin{ass}
  \label{ass.mixing}
  The process $X$ is exponentially $\be$-mixing:
  there exist positive constants $C_\be, C'_\be$ such that, for all $t>0$,
  \begin{align*}
    \bemix{t} \leq C'_\be \exp \bra{- C_\be \, t}
  \end{align*}
  where $\bemix{t}$ is the $\be$-mixing coefficient of $X$ (see \cite{VoRo59} for the precise definition). 
\end{ass}


\begin{ass}
  \label{ass.SDE6}
  $\De \leq 1$ and $n\De \geq 2$.
\end{ass}


\begin{rem}\label{rem:mixing}
(a) Assumption \ref{ass.mixing} is implied by the following exponential ergodicity assumption on $X$: 
\begin{enumerate}[label=($\star$)]

\item \label{ass.ergodic} There is a probability measure $\Pi$ on $\mathbb R^d$, a function $V\in L^1(\Pi)\cap L^1(\mathcal L_\eta)$ and a constant $\rho>0$ such that
  \begin{equation*}
  \|P^t(x,\cdot)-\Pi\|\leq V(x)e^{-\rho t}
  \end{equation*}
  for all $x\in\mathbb R^d$ and $t\geq0$, where $(P^t)_{t\geq0}$ is the transition semigroup of $X$ and $\|\cdot\|$ is the total variation norm. 

\end{enumerate}
This follows from the same argument as in the proof of \cite[Proposition 3]{ref.Liebscher2005} (see Appendix \ref{proof:rem:mixing}). 
\smallskip

\noindent(b) Assumption \ref{ass.ergodic} is satisfied when $X$ satisfies the following conditions in addition to Assumption \ref{ass.SDE1}: 
  \begin{enumerate}[label=(\roman*)]
  
  \item There are constants $r>0$, $\alpha\geq1$ and $M_0>0$ such that $\langle b(x),x\rangle\leq -r|x|^{\alpha}$ for all $x\in\mathbb R^d$ with $|x|>M_0$. 
  
  \item There are constants $\lambda_{-}, \lambda_{+}>0$ such that $\lambda_{-}|x|^{2} \leq |\Sigma(x)^\top x|^2 \leq \lambda_{+}|x|^{2}$ for all $x \in \R^d$. 
  
  \item $\E{\exp (\nu \abs{\eta})} < \infty$ for some $\nu>0$. 
  
  \end{enumerate}
  This follows from Theorem 3.3.4 in \cite{Ku18}.
\end{rem}


\subsection{Multilayer neural networks}
\label{sec.multilayer_NN}

To fit a multilayer neural network, we need to choose an activation function $\sigma:\mathbb{R}\rightarrow \mathbb{R}$ and the network architecture.
Focusing on the importance in deep learning, we use the rectified linear unit (ReLU) activation function $$\sigma(x) = \max\{x,0\}.$$
For $\bv=(v_1, \ldots, v_r)\in \mathbb{R}^r,$ define the shifted activation function $\sigma_{\bv}: \mathbb{R}^r \rightarrow \mathbb{R}^r$ as 
\begin{align*}
  \sigma_{\bv}
  \left(
  \begin{array}{c}
  y_1 \\
  \vdots \\
  y_r
  \end{array}
  \right)
  =
  \left(
  \begin{array}{c}
  \sigma(y_1-v_1) \\
  \vdots \\
  \sigma(y_r-v_r)
  \end{array}
  \right).
\end{align*}
The network architecture $(L, \bp)$ consists of a positive integer $L$ called the {\it number of hidden layers} or {\it depth} and a {\it width vector} $\bp=(p_0, \ldots, p_{L+1}) \in \mathbb{N}^{L+2}.$ A neural network with network architecture $(L, \bp)$ is a function $f:\mathbb R^{p_0}\to\mathbb R^{p_{L+1}}$ of the form 
\begin{align}
  f(x) =
  W_L  \sigma_{\bv_L}   W_{L-1}  \sigma_{\bv_{L-1}}  \cdots  W_1 \sigma_{\bv_1}  W_0x,\qquad x \in \mathbb R^d,
  \label{eq.NN}
\end{align}
where $W_i$ is a $p_i \times p_{i+1}$ weight matrix and $\bv_i \in \mathbb{R}^{p_i}$ is a shift vector.
For $s,F\in(0,\infty]$, we set
%
\begin{multline}\label{eq.defi_bd_sparse_para_space}
  \mF(L, \bp, s, F)
  := \left\{f\1_{[0,1]^d}: \text{$f$ is of the form \eqref{eq.NN}},
\max_{j = 0, \ldots, L} |W_j|_{\infty} \vee |\bv_j|_\infty \leq 1,\right.
  \\
 \left. \sum_{j=0}^L \bra{|W_j|_0 + |\bv_j|_0} \leq s, \, \big\| |f|_\infty \big\|_\infty \leq F \right\},
\end{multline}
where we use the convention that $\bv_0$ is a vector with coefficients all equal to zero. 
We write $\mF(L, \bp, s) = \mF(L, \bp, s, \infty)$ for short. 
Since we are interested in estimating a real-valued function on $\mathbb R^d$, we always assume that $p_0=d$ and $p_{L+1}=1$ in the following. 
\begin{rem}\label{rem:dnn-b}
In this paper, following \cite{ref.SH}, we restrict our attention to neural networks with parameters bounded by 1. It is worth mentioning that any neural network with depth $L$ and parameters bounded by $B>1$ can be transformed into a neural network with depth $L':=(\lceil\log B\rceil+5)L$ and parameters bounded by 1; see Proposition A.3 in \cite{ElPeGrBo21}. Moreover, the proof of this proposition indicates that the transformed network has at most $2(s+12L')$ non-zero parameters, where $s$ is the number of non-zero parameters of the original network. 
\end{rem}

\begin{rem}[Activation function]
Although we focus only on the ReLU activation function for simplicity, our main results are still valid for other activation functions as long as they are Lipschitz continuous. 
To be precise, we need properties of the ReLU activation function only for the proofs of Lemma \ref{lem.entropy} (covering entropy bound for neural networks) and Lemma \ref{lem:approx} (approximation error bound by neural networks), and these results are still valid for Lipschitz continuous activation functions; see Proposition 2 and Theorem 9 in \cite{OhKi20} for details. 
\end{rem}

\subsection{Estimator}
\label{sec.Estimator}

In this paper, unless otherwise stated, an estimator refers to a real-valued random function $\hf$ defined on $\R^d$ such that the map $(\omega,x)\mapsto\hf(\omega,x)$ is measurable with respect to the product of the $\sigma$-algebra generated by $(X_{k\De})_{k=0}^n$ and the Borel $\sigma$-algebra of $\R^d$. 

To evaluate the statistical performance of an estimator $\hf$, we derive bounds for the generalization error defined by
\begin{align*}
  \eR{\hf} & := \E{\nmean \bra{\hf(\X')-f_0(\X')}^2 },
\end{align*}
where $X'=(X'_t)_{t\geq 0}$ is an independent copy of $X$.
\begin{rem}
If $X$ is stationary with invariant distribution $\Pi$, we have
\[
\eR{\hf}=\E{\int_{\mathbb R^d}\bra{\hf(x)-f_0(x)}^2\Pi(dx)}.
\]
That is, $\eR{\hf}$ is equal to the $L^2$-risk with respect to $\Pi$. 
If $X$ is not stationary but satisfies \ref{ass.ergodic}, one can prove (cf.~the proof of Lemma \ref{lem:reduce-to-l2})
\[
\left|\eR{\hf}-\E{\int_{\mathbb R^d}\bra{\hf(x)-f_0(x)}^2\Pi(dx)}\right|\leq\frac{4F^2\E{V(\eta)}}{n(1-e^{-\rho\De})},
\] 
provided that $|\hf|\leq F$ and $|f_0|\leq F$ for some $F>0$. 
In this case, we can also relate $\eR{\hf}$ to the so-called path-dependent generalization error:  
Following Eq.(4) of \cite{KuMo17}, we may define the path-dependent generalization error in our setting as
\[
\weR{\hf}:=\E{\bra{\hf(X_{n\De+s})-f_0(X_{n\De+s})}^2\mid \bra{\X}_{k=0}^{n}},
\]
where $s>0$ is fixed. This can be seen as an $s$-ahead prediction error of $\hf$ given actual observations of $\bra{\X}_{k=0}^{n}$. 
Thanks to the Markov property of $X$, $\weR{\hf}$ can be rewritten as
\[
\weR{\hf}=\int_{\mathbb R^d}\bra{\hf(x)-f_0(x)}^2P^s(X_{n\De},dx).
\]
Therefore, by \ref{ass.ergodic} and the boundedness assumption on $\hf$ and $f_0$, 
\[
\left|\weR{\hf}-\int_{\mathbb R^d}\bra{\hf(x)-f_0(x)}^2\Pi(dx)\right|
\leq4F^2V(X_{n\De})e^{-\rho s}.
\]
Thus we obtain
\[
\weR{\hf}\leq\eR{\hf}+4F^2V(X_{n\De})e^{-\rho s}+\frac{4F^2\E{V(\eta)}}{n(1-e^{-\rho\De})}.
\]
Note that $s$ should be large to make the second term on the right hand side small. This is analogous to bounds in \cite[Section 6]{KuMo17} since they also have the $\phi$-mixing coefficient $\phi(s)$ in their bounds and we usually need $s$ to be large to make $\phi(s)$ small. 
\end{rem}

%
%

Let $X^{\mathsf i}$ denote the $\mathsf i$-th component of $X$. 
Set
\[
\Y := \frac{1}{\De} \bra{X^{\mathsf i}_{(k+1)\De} - \X^{\mathsf i}},\qquad k=0,1,\dots,n-1.
\]
When $\De$ is small, $\Y$ is approximately equal to $b^{\mathsf i}(\X)$ plus a noise term (cf.~\eqref{eq:Y-reg}). This motivates us to consider a least squares estimator obtained by minimizing the following quantity subject to $f\in\mF(L, \bp, s, F)$:
\begin{equation*}
\eQ{f}:=\nmean \bra{\Y-f(\X)}^2.
\end{equation*}
In practice, however, it might be difficult to find an exact minimizer of this quantity. 
For this reason, we handle a general estimator $\hf$ and explicitly incorporate the optimization error into our error bounds. 
Given a pointwise measurable class $\mF$ consisting of real-valued functions on $\R^d$ (cf.~Example 2.3.4 in \cite{ref.Bernstein_ineq}), we define 
\begin{align}
  \PsiFn{\hf} &:= \E{ \eQ{\hf} - \inf_{\tilde{f} \in \mF} \eQ{\tilde f} }.
  \label{eq.estimator}
\end{align}
$\PsiFn{\hf}$ measures a gap between $\hf$ and an exact minimizer of $\eQ{f}$ subject to $f\in\mF$. We write $\Psin{\hf} = \Psi_n^{\mF(L,\bp,s,F)} (\hf)$ for short. 
Convergence analysis of the optimization error $\Psin{\hf}$ is a very active research area in the machine learning community. See e.g.~\cite{Sun20} for an overview. 


\begin{rem}[Choice of $\De$]
In real data application, we have the freedom to choose the time unit, and this choice affects the objective function $\mathcal Q_n(f)$ through the value of $\De$, so one may wonder whether this choice has an impact on the performance of the estimator (see \cite{EgMa19} for a related discussion). 
Indeed, in our case, the choice of the time unit affects only on the scale of the estimator, so the performance will not be sensitive to the choice as long as $F$ is taken sufficiently large. 
To see this, let us consider the case that we change the time scale from $\Delta$ to $\tau\Delta$ for some $\tau>0$. 
Note that by this change we should regard the underlying process as $X^\tau_t:=X_{t/\tau}$ instead of $X_t$ because the observation data $\bra{\X}_{k=0}^{n}$ are unchanged regardless of the choice of the time scale. It is straightforward to check that the new process $X^\tau=(X^\tau_t)_{t\geq0}$ satisfies the following SDE:
\[
dX^\tau_t = \tau^{-1}b (X^\tau_t) dt + \tau^{-1/2}\Sigma (X^\tau_t) dw^\tau_t,
\]
where $w^\tau_t:=\sqrt{\tau}w_{t/\tau}$ is a $d$-dimensional standard Wiener process. Therefore, by this time scale change, the drift function to estimate is also changed to $\tau^{-1}b$. 
With this in mind, we see how the time scale change affects minimizers of objective functions. It alters $\mathcal Q_n(f)$ to
\[
\mathcal Q_n^\tau(f):=\nmean \bra{\tau^{-1}\Y-f(\X)}^2=\tau^{-2}\mathcal Q_n(\tau f).
\]
If $\tau\leq1$, we have $\{\tau f:f\in\mF(L, \bp, s, F)\}\subset\mF(L, \bp, s, F)$. 
Therefore, if a minimizer $\hf$ of $\mathcal Q_n(f)$ over $f\in\mF(L, \bp, s, F)$ is realized in $\{\tau f:f\in\mF(L, \bp, s, F)\}$, then $\tau^{-1}\hf$ gives a minimizer of $\mathcal Q_n^\tau(f)$ over $f\in\mF(L, \bp, s, F)$, so the time scale change does not matter. The former likely happens if we take $F$ sufficiently large. If $\tau>1$, we reach the same conclusion by exchanging the roles of $\mathcal Q_n(f)$ and $\mathcal Q_n^\tau(f)$. 
\end{rem}

\begin{rem}[Extension to L\'evy-driven SDEs]
Our estimation method is formally applicable when the driving process $w_t$ is a more general L\'evy process, and we conjecture that this would be theoretically justified as long as increments of the driving L\'evy process have sufficiently light tails. 
However, the proof will need the following non-trivial modification: In our proof, the fact that the driving process is a \textit{continuous} local martingale is essentially used for applications of Lemma \ref{lemma:sn}, a self-normalized inequality. It will be still possible to use Proposition 4.2.1 in \cite{BJY86} instead of Lemma 1.2 in \cite{ref.VictorH}, which extends the scope of Lemma \ref{lemma:sn} to locally square-integrable martingales but replaces $\langle M\rangle_T$ by $\langle M^c\rangle_T+\sum_{0\leq t\leq T}(\Delta M_t)^2$, where $M^c$ is the continuous martingale part of $M$. So we additionally need to control the sum of squared jumps term $\sum_{0\leq t\leq T}(\Delta M_t)^2$. In our original proof, it is crucial that $\langle M\rangle_T$ enjoys a good pathwise bound, but this is generally false for $\sum_{0\leq t\leq T}(\Delta M_t)^2$. We conjecture that such a pathwise bound would be available after truncation, and the truncation error would be negligible as long as increments of the driving L\'evy process have sufficiently light tails. 
In contrast, if the increments have heavy tails, the truncation error would be problematic, so we speculate that it would be better to employ a different estimation method such as the least truncated squares in \cite{Sc19} or the least absolute deviations in \cite{Ma10}. 
Since one major motivation for using non-Gaussian processes is to model heavy tailed data, we leave this subject to future work. 
\end{rem}


\section{Main results}
\label{sec.main}

Our first main result is the following oracle type inequality. 
\begin{thm}
  \label{thm.oracle_ineq}
  Suppose that Assumptions \ref{ass.SDE1}--\ref{ass.SDE6} are satisfied. 
  Consider the $d$-dimensional diffusion process model \eqref{eq.mod} satisfying $\|f_0 \|_\infty \leq F$ for some $F\geq 1$.
  Let $\hf$ be any estimator taking values in the class $\mF(L, \bp, s, F)$ satisfying $s\geq 2$.
  Then for any $\tau>1$, there exists a constant $C_\tau$ depending only on $(C_{b}, C_{b}', C_\Sigma, C_\Sigma', C_\be, C'_\be, \tau)$ such that
  \begin{align}
    \eR{\hf}
    \leq \tau & \bra{ \Psin{\hf} + \inf_{f\in \mF(L, \bp, s, F)} \norm{f - f_0}_\infty^2 } \notag\\
    & + C_{\tau} F^2 \bra{ \frac{s (L\log s + \log (n\De)) \cdot \log (n\De)}{n\De} + \De }.\label{eq:oracle}
  \end{align}
\end{thm}

\begin{rem}
(a) Theorem \ref{thm.oracle_ineq} can be seen as an analog of \cite[Theorem 2]{ref.SH} in the drift estimation problem for a discretely observed diffusion process. In contrast to the latter result, our bound has an extra logarithmic factor $\log(n\De)$ in the third term. This is caused by the fact that $\X$ are not independent. 
Also, the term $\De$ appears in the bound due to discretization errors. 

\noindent(b) Our bound also shares a similar nature with the bound given in \cite[Theorem 1]{ref.ES}. An important difference is that the latter is concerned with the expected empirical error. Indeed, our proof allows us to establish a bound for the expected empirical error without the $\beta$-mixing property; see Lemma \ref{lem.oracle_ineq2}. 

\noindent(c) Note that the boundedness assumption $\|f_0\|_\infty\leq F$ is imposed on $f_0 = b^{\mathsf i} \1_{[0,1]^d} $ rather than $b$ itself. Since we assume $b$ is (Lipschitz) continuous, this condition always holds with \textit{some} $F$.  
\end{rem}

Recent progress in neural network theory allows us to get a sharp bound for the approximation error 
\[
\inf_{f\in \mF(L, \bp, s, F)} \norm{f - f_0}_\infty
\]
under various assumptions on $f_0$; see e.g.~references cited in the Introduction. 
As an illustration, we apply Theorem \ref{thm.oracle_ineq} to the situation where the drift function has a compositional structure as discussed in \cite{ref.SH}; see Appendix \ref{sec:besov} for the case that the drift function belongs to a Besov space. 

For $p, r\in\mathbb N$ with $p\geq r$, $\beta,K>0$ and $l<u$, we denote by $\mC_r^\beta([l,u]^p, K)$ the set of functions $f:[l,u]^p\to\mathbb R$ satisfying the following conditions:
\begin{enumerate}[label=(\roman*)]

\item $f$ depends on at most $r$ coordinates.

\item $f$ is of class $C^{\lfloor\beta\rfloor}$ and satisfies
\[
\sum_{\balpha : |\balpha|_1 < \beta}\|\partial^{\balpha} f\|_\infty + \sum_{\balpha : |\balpha |_1= \lfloor \beta \rfloor } \, \sup_{\stackrel{x, y \in [l,u]^p}{x \neq y}}
  \frac{|\partial^{\balpha} f(x) - \partial^{\balpha} f(y)|}{|x-y|_\infty^{\beta-\lfloor \beta \rfloor}} \leq K,
\]
where we used multi-index notation, that is, $\partial^{\balpha}= \partial^{\alpha_1}\cdots \partial^{\alpha_p}$ with $\balpha = (\alpha_1, \ldots, \alpha_p) \in \mathbb{Z}_{\geq 0}^p$ and $|\balpha|_1:=\sum_{j=1}^p\alpha_j.$

\end{enumerate}
Let $\bd=(d_0,\ldots,d_{q+1})\in\mathbb N^{q+2}$ with $d_0=d$ and $d_{q+1}=1$, $\bt=(t_0, \ldots, t_q)\in\mathbb N^{q+1}$ with $t_i\leq d_i$ for all $i$ and $\bbeta=(\beta_0, \ldots, \beta_q)\in(0,\infty)^{q+1}$. 
We define $\mG\big(q, \bd, \bt, \bbeta, K\big)$ as the class of functions $f:[0,1]^d\to\mathbb R$ of the form
\begin{equation}\label{eq.mult_composite_regression}
f= g_q \circ \cdots \circ g_0,
\end{equation}
where $g_i=(g_{ij})_j : [l_i, u_i]^{d_i}\rightarrow [l_{i+1},u_{i+1}]^{d_{i+1}}$ with $g_{ij} \in \mC_{t_i}^{\beta_i}\big([l_i,u_i]^{d_i}, K\big)$ for some $|l_{i+1}|, |u_{i+1}| \leq K$, $i=0,\dots,q$; in particular, $g_{ij}$ depends on at most $t_i$ coordinates. 
The function class $\mG\big(q, \bd, \bt, \bbeta, K\big)$ is rich enough to cover many important models such as (generalized) additive models and functions with sparse tensor decomposition; see Section 4 in \cite{ref.SH} for more details. 

Denote by $\ol\mG\big(q, \bd, \bt, \bbeta, K\big)$ the class of all functions $f:\mathbb R^d\to\mathbb R$ such that $\supp(f)\subset[0,1]^d$ and the restriction of $f$ to $[0,1]^d$ belongs to $\mG\big(q, \bd, \bt, \bbeta, K\big)$. 
Also, define
\begin{align*}
  \beta_i^* &:= \beta_i \prod_{\ell=i+1}^q (\beta_{\ell}\wedge 1),\qquad
  \phi_n := \max_{i=0, \ldots, q } (n\De)^{-\frac{2\beta_i^*}{2\beta_i^*+ t_i}},
\end{align*}
and
\[
c_L^l := \sum_{i=0}^q \frac{\beta_i + t_i}{t_i} \log_2 (4t_i\vee 4\beta_i).
\]

\begin{thm}
  \label{thm.main}
  Suppose that Assumptions \ref{ass.SDE1}--\ref{ass.SDE6} are satisfied. 
  Consider the $d$-dimensional diffusion process model \eqref{eq.mod} such that $f_0\in\ol\mG(q, \bd, \bt, \bbeta, K)$. 
  Let $\hf$ be an estimator taking values in the network class $\mF(L, \bp, s, F)$ satisfying
  \begin{enumerate}[label=(\roman*)]
    \item $F \geq K\vee1,$
    \item $11q+8+ c_L^l \log_2 (n\De\phi_n) \leq L \leq c_L^u n\De \phi_n,$
    \item $c_{\bp} n\De \phi_n \leq \min_{i=1, \ldots, L} p_i,$
    \item $c_s^l n\De \phi_n \log (n\De) \leq s \leq c_s^u n\De \phi_n \log (n\De),$
  \end{enumerate}
  for some constants $c_L^u, c_{\bp}, c_s^l, c_s^u>0$.
  Then there exist constants $C, C', C''$ depending only on $(C_{b}, C_{b}', C_\Sigma, C_\Sigma', C_\be, C'_\be, c_L^u, c_{\bp}, c_s^l, c_s^u, q, \bd, \bt, \bbeta, F)$ such that
  \begin{align}
    \eR{\hf}
    \leq C \Psin{\hf} + C' \phi_n L \log^3 (n\De) + C'' \De.
    \label{eq.main1}
  \end{align}
\end{thm}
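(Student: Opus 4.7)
The plan is to combine the oracle inequality Theorem \ref{thm.oracle_ineq} with a ReLU-network approximation result for the compositional class $\ol\mG(q,\bd,\bt,\bbeta,K)$ of the type developed in \cite{ref.SH}, and then choose the architecture parameters so that the approximation term and the stochastic complexity term both match the target rate $\phi_n L\log^3(n\De)$.

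First I would fix some $\tau>1$ (say $\tau=2$) and apply Theorem \ref{thm.oracle_ineq}. This already handles the $\Psi_n(\hf)$ term (which appears verbatim in \eqref{eq.main1}) and the discretization term $\De$. It remains to control
\[
\inf_{f \in \mF(L,\bp,s,F)} \|f - f_0\|_\infty^2
\qquad\text{and}\qquad
F^2 \cdot \frac{s\bigl(L\log s + \log(n\De)\bigr)\log(n\De)}{n\De}.
\]
For the stochastic term, the choice $s \asymp n\De\phi_n\log(n\De)$ in (iv) gives, after noting that $\log s = O(\log(n\De))$ because $\phi_n \geq (n\De)^{-1}$,
\[
\frac{s\bigl(L\log s + \log(n\De)\bigr)\log(n\De)}{n\De}
\;\lesssim\; \phi_n L \log^3(n\De),
\]
which is exactly the shape of the second summand in \eqref{eq.main1} (after absorbing $F^2$ into $C'$).

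Second, I would establish the approximation bound. Since $f_0\in\ol\mG(q,\bd,\bt,\bbeta,K)$ has compositional structure with Hölder smoothness, I would invoke the ReLU-network approximation theorem of Schmidt-Hieber (Theorem 5 of \cite{ref.SH}): under the lower bound $L \geq 11q + 8 + c_L^l \log_2(n\De\phi_n)$ in (ii), the width lower bound in (iii), and the sparsity lower bound $s \geq c_s^l n\De\phi_n\log(n\De)$ in (iv), there exists $\tilde f \in \mF(L,\bp,s,F)$ (the output bound $F\geq K\vee1$ in (i) allows truncating to keep the network inside the class without degrading the approximation) such that
\[
\|\tilde f - f_0\|_\infty^2 \;\leq\; C\,\phi_n.
\]
Substituting this together with the stochastic estimate above into the oracle inequality yields \eqref{eq.main1}, because $\phi_n \leq \phi_n L \log^3(n\De)$.

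The main technical obstacle is not the oracle inequality itself (already proved) but invoking the approximation theorem within the exact class $\mF(L,\bp,s,F)$ used here: in particular, checking that the network constructed in \cite{ref.SH} (i) has all parameters bounded by $1$ in maximum norm, (ii) respects the sparsity budget in (iv), and (iii) can be post-composed with a ReLU clipping device realizing the truncation $|\cdot|\wedge F$ without increasing the depth by more than an $O(1)$ factor and at the cost of only $O(1)$ additional non-zero parameters per output, so that it still lies in $\mF(L,\bp,s,F)$. Once these bookkeeping points are verified (they are standard but must be tracked to ensure the constants $c_L^u, c_{\bp}, c_s^u$ are sufficient), combining the two estimates gives the asserted bound.
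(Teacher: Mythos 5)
Your proposal is correct and follows essentially the same route as the paper: apply Theorem~\ref{thm.oracle_ineq}, use condition (iv) together with $\log s = O(\log(n\De))$ to reduce the stochastic term to $\phi_n L\log^3(n\De)$, and bound the approximation term by $C\phi_n$ via the Schmidt-Hieber compositional construction (the paper isolates this as Lemma~\ref{lem:approx}, proved in the appendix by applying Theorem~5 of \cite{ref.SH} to each $h_{ij}$, clipping, composing, and finally rescaling the output by $\tfrac{\|f\|_\infty}{\|f_n^*\|_\infty}\wedge 1$ rather than ReLU-truncating, though that is an implementation detail). The bookkeeping you flag as the main technical obstacle is precisely the content of the appendix proof of Lemma~\ref{lem:approx}.
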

The error bound in \eqref{eq.main1} consists of three terms. 
The first term $\Psin{\hf}$ is equal to 0 when $\hf$ minimizes $\eQ{f}$ over $f\in\mF(L, \bp, s, F)$. 
The second term $\phi_nL\log^3(n\De)$ is minimized when $L$ is taken of the order of $\log_2 (n\De)$. 
Finally, in the standard asymptotic regime such that $n\De\to\infty$ and $n\De^2\to0$, the third term $\De$ is dominated by $\phi_n$. This leads to the following corollary.  
\begin{cor}\label{coro.main}
Under the assumptions of Theorem \ref{thm.main}, let $\hf$ be a minimizer of $\eQ{f}$ over $f\in\mF(L, \bp, s, F)$. Suppose that $L=O(\log_2(n\De))$, $n\De\to\infty$ and $n\De^2\to0$ as $n\to\infty$. Then
\[
    \eR{\hf} = O\bra{\phi_n \log^4 (n\De)}\qquad\text{as }n\to\infty.
\]
\end{cor}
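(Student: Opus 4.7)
The proof is essentially a bookkeeping consequence of Theorem~\ref{thm.main}: the plan is to apply \eqref{eq.main1} and check that, under the additional hypotheses on $L$, $\De$, and $n\De$, each of its three terms is $O(\phi_n\log^4(n\De))$. No serious technical obstacle is anticipated, since all of the real work has already been done in Theorems~\ref{thm.oracle_ineq} and \ref{thm.main}.

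First I would handle the optimization term. Since $\hf$ is taken to be an exact minimizer of $\eQ{f}$ over $\mF(L,\bp,s,F)$, the definition in \eqref{eq.estimator} gives $\Psin{\hf} = 0$ identically, so the first term on the right-hand side of \eqref{eq.main1} vanishes. For the middle term, the hypothesis $L = O(\log_2(n\De)) = O(\log(n\De))$ yields $C'\phi_n L\log^3(n\De) = O(\phi_n\log^4(n\De))$ immediately, which already accounts for the target rate.

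The only slightly substantive step is absorbing the discretization term $C''\De$. Because each exponent $\tfrac{2\beta_i^*}{2\beta_i^*+t_i}$ lies in $(0,1)$, we have $(n\De)^{-2\beta_i^*/(2\beta_i^*+t_i)} \geq (n\De)^{-1}$ for every $i$, and therefore $\phi_n \geq (n\De)^{-1}$. Hence
\[
\frac{\De}{\phi_n} \leq \De\cdot n\De = n\De^2 \to 0,
\]
so $\De = o(\phi_n) = o(\phi_n\log^4(n\De))$. This is precisely where the hypothesis $n\De^2\to 0$ is used; without it, the Euler-type discretization bias could dominate the minimax rate. Summing the three contributions then yields the claim.
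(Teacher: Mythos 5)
Your proof is correct and follows exactly the route the paper itself sketches immediately before the corollary: set $\Psin{\hf}=0$ for an exact minimizer, use $L=O(\log_2(n\De))$ to control the middle term, and absorb the discretization term $\De$ via $\phi_n\geq(n\De)^{-1}$ and $n\De^2\to0$. The only point the paper leaves implicit and you spell out is the bound $\phi_n\geq(n\De)^{-1}$, which is a worthwhile clarification.
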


\begin{exam}[Additive model]\label{ex:additive}
Suppose that the drift function $b$ is of the form $b(x_1,\dots,x_d)=\sum_{j=1}^db_j(x_j)$ for some Lipschitz functions $b_j:\mathbb R\to\mathbb R^d$, $j=1,\dots,d$. Then $b$ is also Lipschitz continuous. Therefore, if $\Sigma$ is Lipschitz continuous and the assumptions in Remark \ref{rem:mixing}(b) hold, Assumptions \ref{ass.SDE1} and \ref{ass.mixing} are satisfied. 
Let $\beta\geq1$ be a constant and assume that every $b_j$ is of class $C^{\lfloor \beta\rfloor}$ and its $\lfloor\beta\rfloor$-th derivative is $(\beta-\lfloor\beta\rfloor)$-H\"older continuous. 
In this case, we have $\phi_n=(n\De)^{-2\beta/(2\beta+1)}$ (cf.~Section 4 in \cite{ref.SH}), so we obtain $\eR{\hf} = O\bra{(n\De)^{-2\beta/(2\beta+1)} \log^4 (n\De)}$ as $n\to\infty$ in the setting of Corollary \ref{coro.main}. 
\cite{ref.SH} has obtained a similar convergence rate for the expected empirical error by B-spline regression while assuming the drift function has an additive form. Unlike this result, we do not need this information to construct our estimators. 
\end{exam}

We now show that the rate of convergence given by Corollary \ref{coro.main} is optimal up to the logarithmic factor in the minimax sense. For this purpose, we introduce a class of diffusion processes over which we can control the $\beta$-mixing coefficients uniformly while $\phi_n$ gives a lower bound on the minimax rate of convergence for $\eR{\hf}$. 
For $r\in(0,1]$ and $K>0$, we denote by $\mathcal B_0(d,r,K)$ the class of measurable functions $b:\mathbb R^d\to\mathbb R^d$ satisfying the following conditions:
\begin{align*}
\langle b(x),x\rangle\leq -r|x|\quad\text{for all }|x|>2d\quad\text{and}\quad
\sup_{x\in\mathbb R^d}|b(x)|_\infty\leq K.
\end{align*}
For each $b\in\mathcal{B}_0(d, r, K)$, we consider the following SDE:
\begin{equation}\label{eq:sde}
dX_t=b(X_t)dt+dw_t,\qquad X_0=\eta.
\end{equation}
This SDE has a unique weak solution by assumption (cf.~\cite[Theorems 7.2.1]{StVa97}). 
The following lemma shows that the diffusion model \eqref{eq:sde} satisfies Assumption \ref{ass.mixing} ``uniformly'' over $b\in\mathcal B_0(d,r,K)$. 
\begin{lem}\label{lemma:beta}
Consider the $d$-dimensional diffusion process model \eqref{eq:sde} with $b\in\mathcal B_0(d,r,K)$ and $\E{\exp(r|\eta|)}<\infty$. 
There are positive constants $C_\beta$ and $C_\beta'$ depending only on $d,r,K$ and $\eta$ such that
\begin{equation}\label{eq:minimax-beta}
\beta_X(t)\leq C'_\beta e^{-C_\beta t}\qquad\text{for all }t\geq0.
\end{equation}
\end{lem}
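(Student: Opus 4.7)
The plan is to verify condition \ref{ass.ergodic} uniformly over $b\in\mathcal B_0(d,r,K)$ and then invoke Proposition 3 of \cite{ref.Liebscher2005}, exactly as in Remark \ref{rem:mixing}. The implication in that remark goes through Theorem 3.3.4 of \cite{Ku18}, which requires Lipschitz coefficients; since $b\in\mathcal B_0(d,r,K)$ is only bounded and measurable, I would instead verify the two hypotheses of Harris's theorem (in the Hairer--Mattingly form) directly: a Foster--Lyapunov drift inequality for a suitable $V$, and a minorization condition on a sublevel set of $V$.

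For the drift inequality, take $V(x)=\exp(c\sqrt{1+|x|^2})$ with $c=r/2$. A direct computation yields
\[
\nabla V(x)=\frac{cV(x)\,x}{\sqrt{1+|x|^2}},\qquad \tfrac12\Delta V(x)=\tfrac{c^2}{2}V(x)\frac{|x|^2}{1+|x|^2}+O\!\left(\tfrac{V(x)}{|x|}\right)\quad\text{as }|x|\to\infty.
\]
Combining this with the dissipativity condition $\langle b(x),x\rangle\le -r|x|$ for $|x|>2d$ gives
\[
\mathcal L V(x)\le -crV(x)\frac{|x|}{\sqrt{1+|x|^2}}+\tfrac{c^2}{2}V(x)+O\!\left(\tfrac{V(x)}{|x|}\right),
\]
whose leading term $(c^2/2-cr)V=(-3r^2/8)V$ is strictly negative. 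Hence there exist $\gamma,C_1,M>0$ depending only on $(d,r,K)$ such that $\mathcal L V(x)\le -\gamma V(x)+C_1\mathbf 1_{\{|x|\le M\}}(x)$, where on $\{|x|\le M\}$ one controls the generator using the bound $|b|\le K\sqrt d$.

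For the minorization, the key observation is that $\Sigma\equiv I_d$ and $\|b\|_\infty\le K\sqrt d$. Classical Aronson-type two-sided Gaussian bounds for the transition density of a diffusion with bounded measurable drift and identity diffusion yield the existence of $p_1^b(x,y)$ with $p_1^b(x,y)\ge\varepsilon>0$ uniformly for $x,y\in\overline B_M(0)$, where $\varepsilon$ depends only on $(d,K,M)$. In particular,
\[
P^1(x,\cdot)\ge \varepsilon\,\mathrm{Leb}(\,\cdot\,\cap\overline B_M(0))\qquad\text{for all }x\in\overline B_M(0),
\]
giving the small-set property needed by Harris's theorem.

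Harris's theorem then delivers $\|P^t(x,\cdot)-\Pi\|\le C_2 V(x)e^{-\rho t}$ with $C_2,\rho$ depending only on $(d,r,K)$. The hypothesis $\E{\exp(r|\eta|)}<\infty$ together with $c=r/2\le r$ gives $V\in L^1(\eta)$, so condition \ref{ass.ergodic} holds uniformly over $b\in\mathcal B_0(d,r,K)$, and Proposition 3 of \cite{ref.Liebscher2005} produces \eqref{eq:minimax-beta} with constants depending only on $(d,r,K,\eta)$. I expect the main obstacle to be the minorization step: without Lipschitz regularity no pathwise coupling is available across the family $\mathcal B_0(d,r,K)$, and uniformity must instead be extracted from heat-kernel estimates that exploit both the non-degeneracy of the identity diffusion and the global bound $|b|\le K$.
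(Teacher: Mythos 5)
Your proposal is correct and takes essentially the same route as the paper: verify a Foster--Lyapunov drift condition with an exponential Lyapunov function and a minorization condition via Gaussian lower bounds on the one-step transition density for a diffusion with bounded measurable drift (the paper uses the explicit bound of Theorem 2 in \cite{QiRuZh03}), then apply the Hairer--Mattingly form of Harris's theorem (Theorem 1.3 in \cite{HaMa11}) to obtain exponential ergodicity uniformly over $\mathcal B_0(d,r,K)$. The only cosmetic differences are the choice of $V$ (the paper uses $\exp(r|x|\kappa(|x|))$ with a smooth cutoff $\kappa$ rather than $\exp((r/2)\sqrt{1+|x|^2})$) and the final conversion to a $\beta$-mixing bound, which the paper carries out directly via Davydov's formula rather than citing Liebscher's Proposition 3.
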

We define $\mathcal{B}^{\mathsf i}(r, q, \bd, \bt, \bbeta, K)$ as the class of all functions $b\in\mathcal B_0(d,r,K)$ satisfying the following conditions:
\begin{enumerate}[label=(\roman*)]

\item $|b(x)-b(y)|\leq K|x-y|$ for all $x,y\in\R^d$.

\item $f_0=b^{\mathsf i}\1_{[0,1]^d}\in\ol\mG\big(q, \bd, \bt, \bbeta, K\big)$.  

\end{enumerate}
Theorem \ref{thm.main} and Lemma \ref{lemma:beta} imply the following result.
\begin{prop}\label{prop:minimax}
Consider the $d$-dimensional diffusion process model \eqref{eq:sde} with $b\in\mathcal{B}^{\mathsf i}(r, q, \bd, \bt, \bbeta, K)$ and $\E{\exp(r|\eta|)}<\infty$. 
Let $\hf$ be a minimizer of $\eQ{f}$ over $f\in\mF(L, \bp, s, F)$ satisfying conditions (i)--(iv) in Theorem \ref{thm.main}. Then
\[
    \sup_{b\in\mathcal{B}^{\mathsf i}(r, q, \bd, \bt, \bbeta, K)}\eR{\hf} = O\bra{\phi_n \log^4 (n\De)}\qquad\text{as }n\to\infty,
\]
provided that $L=O(\log_2(n\De))$, $n\De\to\infty$ and $n\De^2\to0$.
\end{prop}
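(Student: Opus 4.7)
The strategy is to apply Theorem \ref{thm.main} pointwise in $b\in\mathcal{B}^{\mathsf i}(r,q,\bd,\bt,\bbeta,K)$ and then observe that all constants appearing in the resulting bound can be chosen independently of $b$, which makes the estimate uniform.

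First I would verify that the model \eqref{eq:sde} satisfies Assumptions \ref{ass.SDE1}--\ref{ass.SDE6} with constants that do not depend on $b\in\mathcal{B}^{\mathsf i}$. For Assumption \ref{ass.SDE1}, $\Sigma\equiv I_d$ is trivially Lipschitz with $C'_\Sigma=0$ and $C_\Sigma=\sqrt d$; by item (i) of the definition of $\mathcal{B}^{\mathsf i}$, the drift $b$ is $K$-Lipschitz, so $C'_b=K$, and $|b|_\infty\leq K$ gives $C_b\leq K\sqrt d$. For Assumption \ref{ass.mixing}, Lemma \ref{lemma:beta} supplies constants $C_\beta,C'_\beta$ depending only on $d,r,K$ and the law of $\eta$, hence uniformly in $b\in\mathcal B_0(d,r,K)\supset\mathcal{B}^{\mathsf i}$. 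Assumption \ref{ass.SDE6} holds eventually because $n\De\to\infty$ and $\De\to 0$ (the latter following from $n\De^2\to 0$). Finally, item (ii) of the definition gives $f_0=b^{\mathsf i}\1_{[0,1]^d}\in\ol\mG(q,\bd,\bt,\bbeta,K)$, and $\|f_0\|_\infty\leq K\leq F$ by condition (i) of Theorem \ref{thm.main}.

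Since $\hf$ is an exact minimizer of $\eQ{f}$ over $\mF(L,\bp,s,F)$, we have $\Psin{\hf}=0$, and Theorem \ref{thm.main} yields constants $C'$ and $C''$ depending only on $(d,r,K,\eta,c_L^u,c_{\bp},c_s^l,c_s^u,q,\bd,\bt,\bbeta,F)$ such that
\begin{align*}
\eR{\hf}\leq C'\phi_n L\log^3(n\De)+C''\De\qquad\text{for all }b\in\mathcal{B}^{\mathsf i}.
\end{align*}
The uniformity of $C'$ and $C''$ in $b$ is the main (and essentially only) obstacle, but it is inherited directly from the $b$-independent bounds on Assumptions \ref{ass.SDE1} and \ref{ass.mixing} secured in the previous step, by tracking the dependencies in the proof of Theorem \ref{thm.main}.

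It remains to absorb both terms on the right-hand side into $\phi_n\log^4(n\De)$. The hypothesis $L=O(\log_2(n\De))$ immediately gives $C'\phi_n L\log^3(n\De)=O(\phi_n\log^4(n\De))$. For the discretization term, each exponent $2\beta_i^*/(2\beta_i^*+t_i)$ lies strictly below $1$, so $\phi_n\geq (n\De)^{-1}$ (since $n\De\geq 1$), and consequently
\begin{align*}
\frac{C''\De}{\phi_n\log^4(n\De)}\leq\frac{C''\,n\De^2}{\log^4(n\De)}\longrightarrow0
\end{align*}
under the regime $n\De\to\infty$ and $n\De^2\to 0$. Hence $C''\De=o(\phi_n\log^4(n\De))$, and combining the two estimates gives $\eR{\hf}=O(\phi_n\log^4(n\De))$ with implicit constants independent of $b\in\mathcal{B}^{\mathsf i}$. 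Taking the supremum concludes the proof.
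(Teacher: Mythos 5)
Your proof is correct and follows exactly the route the paper intends: it invokes Lemma \ref{lemma:beta} to verify Assumption \ref{ass.mixing} with constants uniform over $b\in\mathcal B_0(d,r,K)\supset\mathcal B^{\mathsf i}$, checks that Assumptions \ref{ass.SDE1} and \ref{ass.SDE6} likewise hold with $b$-free constants, applies Theorem \ref{thm.main} (whose constants are stated to depend only on those $b$-free quantities), and then carries out the same absorption of the $L\log^3(n\De)$ and $\De$ terms into $\phi_n\log^4(n\De)$ as in Corollary \ref{coro.main}. Nothing is missing; you have simply spelled out the one-line derivation that the paper records as ``Theorem \ref{thm.main} and Lemma \ref{lemma:beta} imply the following result.''
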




Let $i^*$ be an index such that $\beta^*_{i^*}/(2\beta_{i^*}^*+t_{i^*})=\min_{i=0,1,\dots,q}\beta^*_{i}/(2\beta_{i}^*+t_{i})$. Write $\beta^*:=\beta_{i^*}$ and $t^*:=t_{i^*}$. 
The next theorem shows that $\phi_n$ is a lower bound on the minimax rate of convergence for the generalization error over $\mathcal{B}^{\mathsf i}(r, q, \bd, \bt, \bbeta, K)$ for sufficiently large $K$ under natural constraints on $\bd,\bt$ and $\bbeta$. 
\begin{thm}\label{thm:minimax}
Consider the $d$-dimensional diffusion process model \eqref{eq:sde} with $b\in\mathcal{B}^{\mathsf i}(r, q, \bd, \bt, \bbeta, K)$ and $\E{\exp(r|\eta|)}<\infty$. 
Suppose that $n\De\to\infty$ and $\De\to0$ as $n\to\infty$. 
Suppose also that $\beta^*\geq1$ and $t_j\leq\min\{d_0,\dots,d_{j-1}\}$ for all $j$. Then, there is a constant $K>0$ such that 
\[
\liminf_{n\to\infty}\phi_n^{-1}\inf_{\hf}\sup_{b\in\mathcal{B}^{\mathsf i}(r, q, \bd, \bt, \bbeta, K)}\eR{\hf}>0,
\]
where the infimum is taken over all estimators $\hf$. 
\end{thm}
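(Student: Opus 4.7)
The plan is a standard multiple-hypothesis (Fano) lower bound, reducing the problem to estimating a single $t^*$-variate $\beta^*$-Hölder function embedded into the composition structure, along the lines of the lower bound in \cite[Theorem 3]{ref.SH}, with Girsanov's theorem supplying the KL divergence between observation laws of the diffusion. As a preliminary reduction, each $b\in\mathcal B_0(d,r,K)$ admits a unique invariant distribution $\Pi_b$ by Lemma \ref{lemma:beta} and the accompanying discussion; arguing as in the remark following the definition of $\eR{\cdot}$, the discrete averages defining $\eR{\hf}$ are replaced, up to an $O((n\De)^{-1})$ term uniform in $b\in\mathcal B^{\mathsf i}$, by $\int(\hf-b^{\mathsf i}\1_{[0,1]^d})^2\,d\Pi_b$. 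Parabolic regularity for the stationary Fokker--Planck equation (using uniform $\|b\|_\infty$ bounds and the non-degenerate identity diffusion) further shows that $\Pi_b$ has a density bounded below by a positive constant on $[0,1]^d$, uniformly in $b\in\mathcal B^{\mathsf i}$. Hence it suffices to exhibit a sub-family of drifts inside $\mathcal B^{\mathsf i}$ on which the minimax $L^2(dx)$ risk over $[0,1]^d$ is at least of order $\phi_n$.

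\textbf{Packing construction.} Set $h\asymp(n\De)^{-1/(2\beta^*+t^*)}$, pick a smooth bump $\psi\in\mC_{t^*}^{\beta^*}([0,1]^{t^*},K_1)$ supported in the open unit cube, and let $\psi_k(y)=h^{\beta^*}\psi((y-y_k)/h)$ for the $m=\lfloor h^{-t^*}\rfloor$ grid points $y_k\in h\,\mathbb Z^{t^*}\cap[0,1]^{t^*}$. For each $\tau\in\{0,1\}^m$ I construct $b_\tau$ as follows: set $b_\tau^{\,j}(x)=-x^j$ for $j\neq\mathsf i$; choose $g_0,\dots,g_{i^*-1}$ to realize coordinate projections down to $t^*$ variables, which is possible because $t_j\leq\min\{d_0,\dots,d_{j-1}\}$; take the first component of $g_{i^*}$ to be a fixed base function plus $\sum_k\tau_k\psi_k$, and its remaining components to be the identity; take the post-layers $g_{i^*+1},\dots,g_q$ to be simple $(\beta_\ell\wedge 1)$-Hölder maps that preserve the perturbation up to a multiplicative constant (using $\beta^*\geq 1$ to ensure the composite still lies in $\mC_{t_i}^{\beta_i}$ for each $i$). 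Then $b_\tau^{\mathsf i}\1_{[0,1]^d}\in\ol\mG(q,\bd,\bt,\bbeta,K)$ for $K$ large enough, and $b_\tau\in\mathcal B^{\mathsf i}$ with the same constants across $\tau$. Varshamov--Gilbert extracts a subfamily of size $M\geq 2^{m/8}$ with pairwise Hamming distance $\geq m/8$, giving
\[
\|b_\tau^{\mathsf i}-b_{\tau'}^{\mathsf i}\|_{L^2(dx)}^2\gtrsim h^{2\beta^*}\asymp\phi_n\qquad\text{and}\qquad\|b_\tau-b_{\tau'}\|_\infty^2\lesssim h^{2\beta^*}.
\]

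\textbf{KL bound and Fano.} Let $P_\tau^{n\De}$ denote the law of $(X_0,X_\De,\dots,X_{n\De})$ under drift $b_\tau$. Because $\Sigma=I$, Girsanov's theorem combined with the data-processing inequality (the discrete $\sigma$-algebra is coarser than the continuous one) yields
\[
\mathrm{KL}\bra{P_\tau^{n\De}\,\big\|\,P_{\tau'}^{n\De}}\leq\tfrac12\,\mathbb E_\tau\!\int_0^{n\De}|b_\tau(X_t)-b_{\tau'}(X_t)|^2\,dt\lesssim n\De\,\|b_\tau^{\mathsf i}-b_{\tau'}^{\mathsf i}\|_{L^2(\Pi_{b_\tau})}^2+O(1),
\]
where uniform exponential ergodicity replaces the time-integral by $n\De$ times its stationary average (the $O(1)$ pre-stationary transient being controlled by $V\in L^1(\eta)$). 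Since $n\De\,h^{2\beta^*}\asymp m\asymp\log M$, after rescaling $\psi$ by a sufficiently small constant I obtain $\max_{\tau,\tau'}\mathrm{KL}(P_\tau^{n\De}\|P_{\tau'}^{n\De})\leq(\log M)/8$, the hypothesis of Fano's inequality. The latter then gives $\inf_{\hf}\max_\tau\mathbb E_\tau\|\hf-b_\tau^{\mathsf i}\1_{[0,1]^d}\|^2_{L^2(\Pi_{b_\tau})}\gtrsim\phi_n$, which combined with the first paragraph proves the theorem.

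The hard part will be the packing construction: within the rigid compositional form of $\ol\mG$ one must build an $m$-parameter family of drifts so that each $b_\tau$ lies in $\mathcal B^{\mathsf i}(r,q,\bd,\bt,\bbeta,K)$ for the \emph{same} constants, while the layer-$i^*$ perturbation survives composition with the later layers as a genuine $h^{\beta^*}$-amplitude deviation of $b^{\mathsf i}$. This is exactly where the two structural assumptions enter: $\beta^*\geq 1$ ensures Hölder estimates survive composition with the deeper Lipschitz layers, and $t_j\leq\min\{d_0,\dots,d_{j-1}\}$ permits the projection-type pre-layers. A secondary technical point is the uniform lower bound on the density of $\Pi_b$ over $[0,1]^d$; this follows from standard elliptic/parabolic regularity for the Fokker--Planck equation since the perturbations of $b$ are $o(1)$ in $\|\cdot\|_\infty$.
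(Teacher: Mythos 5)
Your overall strategy is the same as the paper's: reduce the generalization risk to the stationary $L^2(\Pi_b)$ risk and then to $L^2(dx)$ risk via a uniform density lower bound, build a Schmidt--Hieber-style packing of composite drifts, and bound the Kullback--Leibler divergence between continuous-path laws via Girsanov. However, two of your concrete steps are flawed and must be replaced.

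First, the drift you construct is not in $\mathcal B^{\mathsf i}(r,q,\bd,\bt,\bbeta,K)$. You set $b_\tau^{\,j}(x)=-x^j$ for $j\neq\mathsf i$, but membership in $\mathcal B_0(d,r,K)$ requires $\sup_{x}|b(x)|_\infty\leq K$, and $-x^j$ is unbounded. Moreover, with $b^{\mathsf i}$ supported on $[0,1]^d$ and the other coordinates chosen this way, the inner product $\langle b(x),x\rangle$ vanishes along the $\mathsf i$-th axis for $|x|>1$, so the dissipativity condition $\langle b(x),x\rangle\leq -r|x|$ for $|x|>2d$ also fails. The paper instead defines $b_j(x)=f_{(j)}(x)e_{\mathsf i}-r\tfrac{x}{|x|}\psi(|x|)$ with a smooth cutoff $\psi$ vanishing near the origin, which is globally bounded, Lipschitz, and uniformly dissipative; you should use a radial, bounded confining field of this type rather than a linear one.

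Second, your KL bound replaces $\int_0^{n\De}\mathbb E_\tau|b_\tau(X_t)-b_{\tau'}(X_t)|^2\,dt$ by $n\De$ times a stationary average plus $O(1)$, invoking exponential ergodicity. This requires an upper bound on the density $d\Pi_{b_\tau}/dx$ on $[0,1]^d$ that is uniform over the packing (you only establish a lower bound, which goes the wrong way here), and the ``$O(1)$ transient'' step is not uniform in $\tau$ without further argument. The paper sidesteps both issues by using the Gaussian-type transition-density \emph{upper} bound $p_j^t(x,y)\leq\Gamma$ for $t\geq t_0$ (Qian--Russo--Zheng), which holds for any initial law, and bounding the short-time integral $\int_0^{t_0}$ crudely by $4\|f\|_\infty^2 t_0$; choosing $t_0$ so that this short-time contribution is at most $\tfrac19\log 2\leq\tfrac19\log M$ makes the argument uniform and explicit. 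Your parabolic-regularity route to the stationary-density lower bound is not wrong in spirit, but the minorization inequality derived from the same heat-kernel bound, together with a Lyapunov moment estimate for $\Pi_b$, gives this lower bound directly with explicit constants and should be preferred for uniformity over the class.
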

The assumption $\beta^*\geq1$ is necessary to ensure that the class $\mathcal{B}^{\mathsf i}(r, q, \bd, \bt, \bbeta, K)$ contains sufficiently many Lipschitz continuous functions. 
The assumption on $\bd$ and $\bt$ is the same one as in \cite[Theorem 3]{ref.SH}. 

Proposition \ref{prop:minimax} and Theorem \ref{thm:minimax} show that any minimizer of $\mathcal Q_n(f)$ over $f\in\mF(L, \bp, s, F)$ is minimax optimal up to the $\log^4(n\De)$ factor in the setting described in these results. So far, we have no idea how to remove the (presumably) extra logarithmic factor. 
%

\begin{rem}
Our estimator in Corollary \ref{coro.main} is not adaptive in the sense that the sparsity parameter $s$ should be selected of the order $O(n\De\phi_n\log(n\De))$ which depends on $\bt$ and $\bbeta$. 
We might be able to resolve this problem by introducing a penalty term when minimizing $\eQ{f}$ as suggested by \cite{OhKi20} for nonparametric regression models. We leave such an extension to future work. 
\end{rem}

\begin{rem}
Some authors have recently showed that deep neural networks have good ability to approximate \textit{discontinuous} functions; see \cite{HaSu20,ref.7,ImFu20} for example. 
However, we cannot benefit from these results because we encounter a fundamental difficulty specific to diffusion process models when the drift function is discontinuous: In our argument, the Lipschitz continuity of the drift function plays a crucial role to control discretization errors, entering the final bound as the last term on the right hand side of \eqref{eq:oracle}. 
To be precise, the Lipschitz continuity is essentially used in the proof of Lemma \ref{lemma:gronwall}. We conjecture that it could be relaxed to a H\"older continuity by replacing Gronwall’s inequality in this proof with Bihari's inequality, although this will change the final bound. 
In contrast, when the drift function is discontinuous, the current proof strategy will no longer work, so we will need a fundamental modification. We leave this topic to future research. 
\end{rem}


\section{Proof of Theorem \ref{thm.oracle_ineq}}
\label{sec.proof_oracle}

To prove Theorem \ref{thm.oracle_ineq}, we establish two general error bounds in nonparametric drift estimation for a discretely observed diffusion process, which may be of independent interest. 

Consider a pointwise measurable class $\mF$ of real-valued functions on $\mathbb R^d$. 
For $\delta>0$, a subset $\mG\subset\mF$ is called a \textit{$\de$-net} of $\mF$ with respect to $\|\cdot \|_\infty$ if for every $f\in\mF$ there is a function $g\in\mG$ such that $\|f-g\|_\infty\leq\de$. 
The smallest possible cardinality of a $\delta$-net of $\mF$ with respect to $\|\cdot \|_\infty$ is called the \textit{covering number} of $\mF$ with respect to $\|\cdot \|_\infty$ and denoted by $\mN(\delta, \mF, \|\cdot \|_\infty)$. 

Let $f_0:\mathbb R^d\to\mathbb R$ be a measurable function we wish to estimate. 
We consider an estimator $\hf$ taking values in $\mF$, and define its expected empirical error by
  \begin{align*}
    \heR{\hf} & := \E{\nmean \bra{\hf(\X)-f_0(\X)}^2 }.
  \end{align*}
  The first bound relates the generalization error $\eR{\hf}$ to its empirical counterpart $\heR{\hf}$. 
\begin{lem}
  \label{lem.oracle_ineq}
  Let $X=(X_t)_{t\geq0}$ be a general $d$-dimensional stochastic process that is not necessarily a diffusion process satisfying \eqref{eq.mod}. 
  Let $\delta>0$ and $\mN_n$ be an integer such that $\mN_n \geq \mN(\delta, \mF, \|\cdot\|_\infty)\vee\exp (7.4)$. Also, let $a_n$ be a positive number such that $\mu_n := \lfloor n/2a_n\rfloor>0$. 
  In addition, suppose that there is a number $F\geq1$ such that $|f|\leq F$ for all $f\in\mF\cup\{f_0\}$. Then, for all $\ep \in (0,1]$, 
  \begin{multline*}
    \eR{\hf}
    \leq (1+\ep) \heR{\hf}  + \frac{24(1+\ep)^2}{\ep} \cdot F^2 \frac{\log \mN_n}{\mu_n} \\
     + 4(2+\ep)F^2 \bemix{a_n \De} +\frac{8F^2}{n} + 4(2+\ep)F\de.
  \end{multline*}
\end{lem}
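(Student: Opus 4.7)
The approach will combine three ingredients: (i) a $\de$-net reduction to a finite subclass $\mG\subset\mF$ of size at most $\mN_n$; (ii) Berbee's coupling lemma applied to alternating blocks of the $\beta$-mixing sample to reduce to an iid block statistic; and (iii) Bernstein's inequality together with the variance--mean comparison $\Var((g(\X)-f_0(\X))^2)\leq 4F^2\E{(g(\X)-f_0(\X))^2}$, from which the multiplicative $(1+\ep)$ factor is extracted via a peeling inequality.

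First, I will write $R(f):=\E{\hat R(f)}$ for the population counterpart of $\hat R(f):=\nmean(f(\X)-f_0(\X))^2$, so that $\eR{\hf}=\E{R(\hf)}$ and $\heR{\hf}=\E{\hat R(\hf)}$. Given $f\in\mF$, let $\pi(f)\in\mG$ be a nearest point in the $\de$-net, so $\|f-\pi(f)\|_\infty\leq\de$. The identity $(f-f_0)^2-(g-f_0)^2=(f-g)(f+g-2f_0)$ combined with $|f+g-2f_0|\leq 4F$ yields $|R(f)-R(\pi(f))|\vee|\hat R(f)-\hat R(\pi(f))|\leq 4F\de$, hence
\[
R(\hf)-(1+\ep)\hat R(\hf)\leq \max_{g\in\mG}\bigl\{R(g)-(1+\ep)\hat R(g)\bigr\}+4F(2+\ep)\de,
\]
and it will suffice to bound the expectation of the maximum on the right.

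Next, for each fixed $g\in\mG$ I will split $\{0,\ldots,n-1\}$ into $2\mu_n$ consecutive blocks of length $a_n$ plus a remainder of length $<2a_n$, and consider the odd-indexed block sums $H_j(g):=\sum_{k\in B_j}(g(\X)-f_0(\X))^2$ (even-indexed blocks being handled symmetrically). The sequences of odd and even block sums are each separated by gaps of length $a_n\De$, so Berbee's lemma produces independent copies with coupling failure probability bounded by $\mu_n\,\bemix{a_n\De}$; since $|H_j(g)|\leq 4F^2 a_n$, this propagates to an additive error of order $F^2\bemix{a_n\De}$ in the block-averaged statistic, while the remainder contributes the $8F^2/n$ term. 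On the independent copies, each block sum satisfies $0\leq H_j\leq 4F^2 a_n$ with $\Var(H_j)\leq 4F^2 a_n\E{H_j}$, so Bernstein's inequality yields, for every $x>0$ and each fixed $g$, a deviation bound of the form $R(g)-\hat R(g)\leq \sqrt{8F^2 R(g)\,x/\mu_n}+8F^2 x/(3\mu_n)$ with probability $\geq 1-e^{-x}$. The elementary inequality $2\sqrt{AB}\leq\ep A+B/\ep$ will absorb the square root into $\ep R(g)$ on the left, producing $R(g)\leq(1+\ep)\hat R(g)+C_\ep F^2 x/\mu_n$ with $C_\ep=O(\ep^{-1})$. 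A union bound over $g\in\mG$ with $x=u+\log\mN_n$, followed by integration in $u\in[0,\infty)$, then produces the leading $\frac{24(1+\ep)^2}{\ep}F^2\frac{\log\mN_n}{\mu_n}$ contribution.

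The main obstacle will be obtaining the multiplicative form $(1+\ep)\heR{\hf}$ rather than a standard additive bound such as $\heR{\hf}+O(F\sqrt{\log\mN_n/\mu_n})$: this forces me to exploit the variance--mean comparison in the Bernstein exponent and to peel with a carefully chosen splitting of the cross term. A secondary bookkeeping challenge is ensuring that the coupling error enters additively in the averaged statistic (rather than multiplying by the block length), which uses the boundedness of the summands, so that the coefficients $4(2+\ep)$ in the mixing term and $8F^2/n$ in the edge-effect term come out cleanly.
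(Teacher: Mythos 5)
Your proposal follows the same overall architecture as the paper's proof — a $\delta$-net reduction, an alternating-block $\beta$-mixing coupling (via Berbee/Rio), Bernstein's inequality on the independent block sums, and a peeling step to extract the multiplicative $(1+\ep)$ — and it is correct in spirit. The main organizational difference is that you work directly with the deviation $R(g)-\hat R(g)$ of the empirical statistic from its (deterministic) mean, producing a pointwise exponential tail and then integrating after the union bound, whereas the paper symmetrizes with the independent copy $X'$: it couples the block sums of \emph{both} $X$ and $X'$, forms the self-normalized maximum $B=\max_j\bigl|\sum_l (g'_{j,l}-g_{j,l})/(2Fr_j)\bigr|$ with the floored normalizer $r_j$, bounds $\E{B}$ and $\E{B^2}$ by Bernstein, and then feeds these through the Cauchy--Schwarz inequality and a quadratic inequality (Eq.\ (C.4) of Schmidt-Hieber). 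Your observation that $\eR{\hf}=\E{R(\hf)}$ with $R$ deterministic lets you avoid coupling $X'$ at all, which is a small genuine simplification; the paper's symmetrized route more closely mirrors the iid regression argument it is adapted from. Two points deserve attention if you flesh this out: (i) you must transfer from the coupled statistic back to the original at the \emph{expectation} level (i.e.\ bound $\E{\max_g|\hat R^{\mathrm{coupled}}(g)-\hat R(g)|}$ by $4F^2\bemix{a_n\De}$ using that all blocks share the same coupling event), not at the tail-bound level, since the coupling failure probability does not decay with the deviation threshold; you gesture at this but do not spell it out. (ii) The exact constants $\tfrac{24(1+\ep)^2}{\ep}$, $4(2+\ep)$ and $8F^2/n$ are asserted rather than derived; a direct Bernstein calculation naturally gives $\tfrac{1}{1-\ep}$ in place of $1+\ep$ and slightly different numerical prefactors, so matching the stated constants would require a careful re-parametrization of $\ep$ and bookkeeping of the odd/even block contributions.
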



The second bound is an oracle type inequality for $\heR{\hf}$. 
\begin{lem}
  \label{lem.oracle_ineq2}
  Let $X$ be a $d$-dimensional diffusion process satisfying \eqref{eq.mod}, and set $f_0:=b^{\mathsf i}\1_{[0,1]^d}$. 
  Also, let $\delta>0$ and set $\mN_n:=\mN(\delta, \mF, \|\cdot\|_\infty)$. 
  Suppose that there is a number $F\geq1$ such that $|f|\leq F$ for all $f\in\mF\cup\{f_0\}$. 
  Suppose also that $\supp (f) \subset [0,1]^d$ for all $f \in \mF$. 
  Then, under Assumptions \ref{ass.SDE1} and \ref{ass.SDE6}, for all $\ep\in (0,1)$ there exists a constant $C_\ep$ depending only on $(C_{b}, C_{b}', C_\Sigma, C_\Sigma', \ep)$ such that
  \begin{align*}
    \heR{\hf}
    & \leq \frac{1}{1-\ep} \PsiFn{\hf} + \frac{1+\ep}{1-\ep} \inf_{f\in \mF} \eR{f} + C_\ep F^2 \gamma_{\de, n},
  \end{align*}
  where 
  \[
   \gamma_{\de, n}:= \De + \de + \frac{1}{\sqrt{n\De}} \int_0^\de \sqrt{\log \mN (u, \mF, \norm{\cdot}_\infty)} du + \frac{\log\mN_n+1}{n\De}.
  \]
\end{lem}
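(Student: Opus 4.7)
The plan is to follow the classical ERM recipe, with two diffusion-specific features to absorb: (i) a Riemann-discretization bias coming from $\Y\neq b^{\mathsf i}(\X)$, and (ii) the martingale (not i.i.d.) noise in the observation model. First, expanding squares gives, for every fixed $f\in\mF$,
\[
\eQ{\hf}-\eQ{f}
= \nmean(\hf(\X)-f_0(\X))^2 - \nmean(f(\X)-f_0(\X))^2 - 2\nmean(\Y-f_0(\X))(\hf(\X)-f(\X)).
\]
Taking expectations, using $\E{\eQ{\hf}-\eQ{f}}\leq \PsiFn{\hf}$ (since $\inf_g\eQ{g}\leq \eQ{f}$) and the identity $\heR{f}=\eR{f}$ for deterministic $f$ (since $X$ and $X'$ share the same law), I obtain
\[
\heR{\hf}\leq \PsiFn{\hf}+\eR{f}+2T_R+2T_M,
\]
where $T_R:=\E\nmean R_{k\De}(\hf(\X)-f(\X))$ and $T_M:=\E\nmean M_{k\De}(\hf(\X)-f(\X))$ come from the decomposition $\Y=b^{\mathsf i}(\X)+R_{k\De}+M_{k\De}$ with $R_{k\De}=\De^{-1}\int_{k\De}^{(k+1)\De}(b^{\mathsf i}(X_s)-b^{\mathsf i}(\X))\,ds$ and $M_{k\De}=\De^{-1}\int_{k\De}^{(k+1)\De}\Sigma^{\mathsf i}(X_s)\,dw_s$. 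The assumptions $\supp\hf,\supp f\subset[0,1]^d$ and $f_0=b^{\mathsf i}\1_{[0,1]^d}$ force the cross-term to vanish on $\{\X\notin[0,1]^d\}$. Next, Lipschitz continuity of $b^{\mathsf i}$ and the standard moment bound $\E{|X_s-X_{k\De}|^2}\leq C(s-k\De)$ (Assumption \ref{ass.SDE1}, Assumption \ref{ass.SDE6}) yield $\E R_{k\De}^2\leq C_1\De$; Cauchy--Schwarz together with $(\hf-f)^2\leq 2(\hf-f_0)^2+2(f-f_0)^2$ and Young's inequality then absorb $|T_R|$ into $\tfrac{\ep}{4}\heR{\hf}+\tfrac{\ep}{4}\eR{f}+C_\ep \De$.

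The martingale term $T_M$ is the crux. Writing $\hf-f=(\hf-f_0)-(f-f_0)$, the $(f-f_0)$ summand vanishes in expectation, because $f$ is deterministic and $\E\{M_{k\De}\mid\mathcal F_{k\De}\}=0$. For the $(\hf-f_0)$ summand I apply a chaining argument with respect to $\|\cdot\|_\infty$. Fix a $\de$-net $\mG_\de$ of $\mF$ with $|\mG_\de|\leq\mN_n$, and (measurably) select $g_0(\omega)\in\mG_\de$ with $\|\hf-g_0\|_\infty\leq\de$. Using Itô's isometry and Lipschitz continuity of $\Sigma$ to obtain $\E\{M_{k\De}^2\mid\mathcal F_{k\De}\}\leq C_2/\De$, a martingale Bernstein/Freedman inequality applied pointwise to each $g\in\mG_\de$ together with a union bound gives
\[
\E{\max_{g\in\mG_\de}\left|\nmean M_{k\De}(g(\X)-f_0(\X))\right|^2}\leq C F^2\,\frac{\log\mN_n+1}{n\De}.
\]
For the residual $\nmean M_{k\De}(\hf(\X)-g_0(\X))$, a dyadic chaining along $\de_j=2^{-j}\de$ with nets $\mG_{\de_j}$ yields the Dudley-type bound
\[
\E{\sup_{g\in\mF,\,\|g-g_0\|_\infty\leq\de}\left|\nmean M_{k\De}(g(\X)-g_0(\X))\right|}
\leq \frac{C F}{\sqrt{n\De}}\int_0^\de\sqrt{\log\mN(u,\mF,\norm{\cdot}_\infty)}\,du + CF\de,
\]
where the $1/\sqrt{n\De}$ normalization reflects the per-step conditional variance $\E\{M_{k\De}^2\mid\mathcal F_{k\De}\}\lesssim 1/\De$, so the effective sample size is $n\De$. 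Combining these two bounds with Cauchy--Schwarz on the $\sqrt{\heR{\hf}}$-cross term and Young's inequality absorbs a $\tfrac{\ep}{4}\heR{\hf}$-fraction into the left side, leaving $2|T_M|\leq \tfrac{\ep}{2}\heR{\hf}+C_\ep F^2\gamma_{\de,n}$.

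Assembling the bounds for $T_R$ and $T_M$ with the basic inequality and moving the $\ep\heR{\hf}$-terms to the left side produces
\[
(1-\ep)\heR{\hf}\leq \PsiFn{\hf}+(1+\ep)\eR{f}+C_\ep F^2\gamma_{\de,n},
\]
valid for every $f\in\mF$; taking $\inf_{f\in\mF}$ and dividing by $1-\ep$ delivers the claim (after relabelling $\ep$ if needed). The main obstacle is the martingale empirical process bound in the middle paragraph: classical symmetrization--chaining does not apply verbatim to the dependent, non-i.i.d.\ martingale increments $M_{k\De}$, so one has to combine a Bernstein/Freedman-type tail estimate for martingales with subGaussian conditional increments (using $\E\{M_{k\De}^2\mid\mathcal F_{k\De}\}\lesssim 1/\De$) with a $\|\cdot\|_\infty$ dyadic chaining; carefully tracking the $1/\sqrt{n\De}$ normalization is what produces $\gamma_{\de,n}$ in the exact form required to yield the minimax rate $\phi_n$ downstream.
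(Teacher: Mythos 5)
Your outline shares the paper's skeleton: same decomposition of $Y_{k\De}$ into drift plus Riemann remainder plus stochastic integral, same basic inequality with $\PsiFn{\hf}$, same $\delta$-net with a ``net term'' bounded by a union bound and a ``residual term'' handled by chaining. But the central step for the stochastic term has a genuine gap. You treat $M_{k\De}=\De^{-1}\int_{k\De}^{(k+1)\De}\Sigma^{\mathsf i}(X_s)\,dw_s$ as a single martingale increment with almost-surely bounded conditional variance $\Ep\{M_{k\De}^2\mid\mathcal F_{k\De}\}\lesssim 1/\De$, and then invoke Bernstein/Freedman plus sub-Gaussian chaining. That variance bound is not almost surely true under the lemma's hypotheses: $\Sigma$ is only globally Lipschitz, so $\Sigma^{\mathsf i}(X_s)$ for $s\in(k\De,(k+1)\De)$ is unbounded even on $\{X_{k\De}\in[0,1]^d\}$, and the conditional variance $\De^{-2}\Ep\{\int|\Sigma^{\mathsf i}(X_s)|^2ds\mid\mathcal F_{k\De}\}$ is only bounded \emph{in expectation} (via the Gronwall estimate), not pointwise. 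Both the Freedman inequality and the dyadic chaining you sketch require a predictable, deterministic (or at least a.s.\ bounded) bracket, so the argument as written does not close.

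The paper avoids exactly this issue by splitting $Z_{k\De}=Z'_{k\De}+Z''_{k\De}$, with $Z'_{k\De}$ using $\Sigma^{\mathsf i}$ frozen at $X_{k\De}$ (so the factor $(f-g)(X_{k\De})\Sigma^{\mathsf i}(X_{k\De})$ is deterministically bounded on the support of $f-g$, yielding a genuine sub-Gaussian process for chaining), and $Z''_{k\De}$ absorbing the Lipschitz discrepancy and controlled by the Gronwall/It\^o-isometry moment bound. For the net term the paper uses the de la Pe\~na--Klass--Lai self-normalized exponential inequality (Lemma \ref{lemma:sn}), which normalizes by the \emph{actual random} quadratic variation and so sidesteps any boundedness requirement; this is what lets the full $Z_{k\De}$ (including $Z''$) be handled in one shot at the net term. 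To repair your proof you would need either the $Z'/Z''$ split with the frozen-$\Sigma$ bracket, or to replace Freedman by a self-normalized inequality of this type; otherwise the sub-Gaussian structure underlying your Dudley bound is simply not present.
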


\begin{rem}
It is worth mentioning that Lemma \ref{lem.oracle_ineq2} does not require Assumption \ref{ass.mixing}. That is, $X$ is not necessarily $\beta$-mixing.  
\end{rem}

The remainder of this section proceeds as follows. We prove Lemmas \ref{lem.oracle_ineq} and \ref{lem.oracle_ineq2} in Sections \ref{sec:proof-mixing} and \ref{sec:proof-martingale}, respectively. Then, we give a proof of Theorem \ref{thm.oracle_ineq} in Section \ref{sec:proof-oracle_ineq}. 


\subsection{Proof of Lemma \ref{lem.oracle_ineq}}\label{sec:proof-mixing}


Since $\mN_n\geq\mN(\delta,\mF,\|\cdot\|_\infty)$, we can take a $\delta$-net $\{f_1,\dots,f_{\mN_n}\}$ of $\mF$ with respect to $\|\cdot\|_\infty$. 
By definition there is a random variable $j^*$ such that $|\hf-f_{j^*}| \leq \delta$.

We follow similar arguments as in Part (I) of the proof of \cite[Lemma 4]{ref.SH}, but we apply a blocking technique as in \cite{Yu94} to address the dependence of observations. 


\subsubsection{Step 1: $\be$-mixing}

For $l=0,1,\dots,\mu_n-1$, define
\begin{align*}
  H_l & := \seq{ k : 2l a_n \leq k \leq (2l+1) a_n - 1  }, \\
  H_{2,l} & := \seq{ k : (2l+1) a_n \leq k \leq 2(l+1) a_n - 1  }, \\
  \tilde{g}_l &:=
  \left(
  \begin{array}{c}
    \sum_{k \in H_{l}} ((f_1-f_0)(\X))^2 \\
    \sum_{k \in H_{l}} ((f_2-f_0)(\X))^2 \\
    \vdots \\
    \sum_{k \in H_{l}} ((f_{\mN_n}-f_0)(\X))^2 \\
  \end{array}
  \right), \\
  \tilde{g}'_l &:=
  \left(
  \begin{array}{c}
    \sum_{k \in H_{l}} ((f_1-f_0)(\X'))^2 \\
    \sum_{k \in H_{l}} ((f_2-f_0)(\X'))^2 \\
    \vdots \\
    \sum_{k \in H_{l}} ((f_{\mN_n}-f_0)(\X'))^2 \\
  \end{array}
  \right).
\end{align*}
In the following, extending the probability space if necessary, we assume that there is a sequence $(U_l)_{l=1}^\infty$ of i.i.d.~uniform variables over $[0,1]$ independent of $X$. 




\begin{prop}
  There exists a sequence $\bra{g_l}_{l=0}^{\mu_n -1}$ of independent random vectors in $\mathbb R^d$ such that each $g_l$ has the same law as $\tilde{g}_l$ and satisfies $\P{g_l \neq \tilde{g}_l} \leq \bemix{a_n \De}$. 
  Also, there exists a sequence $\bra{g'_l}_{l=0}^{\mu_n -1}$ of independent random vectors in $\mathbb R^d$ such that each $g'_l$ has the same law as $\tilde{g}'_l$ and satisfies $\P{g'_l \neq \tilde{g}'_l} \leq \bemix{a_n \De}$. 
\end{prop}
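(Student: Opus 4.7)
The plan is to build the $g_l$'s (and analogously the $g'_l$'s) by a standard recursive application of Berbee's coupling lemma to the even blocks $H_0,\ldots,H_{\mu_n-1}$, using the auxiliary uniform sequence $(U_l)$ as the external randomization needed to enrich the probability space. The key observation is that each $\tilde g_l$ is measurable with respect to $\mathcal G_l:=\sigma(X_{k\De}:k\in H_l)$, and between $\mathcal F_{l-1}:=\sigma(\mathcal G_0,\ldots,\mathcal G_{l-1})$ (which sits inside $\sigma(X_s:s\leq((2l-1)a_n-1)\De)$) and $\mathcal G_l$ (contained in $\sigma(X_s:s\geq 2la_n\De)$) there is a time gap of at least $a_n\De$. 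The very definition of the $\beta$-mixing coefficient of $X$ therefore yields $\beta(\mathcal F_{l-1},\mathcal G_l)\leq\bemix{a_n\De}$.

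The recursion then proceeds as follows. Set $g_0:=\tilde g_0$. Given $g_0,\ldots,g_{l-1}$ realized as a measurable function of $(\tilde g_0,\ldots,\tilde g_{l-1},U_0,\ldots,U_{l-1})$, I will invoke Berbee's lemma on the pair $(\mathcal F_{l-1},\sigma(\tilde g_l))$ with $U_l$ as external randomness, to produce a vector $g_l$ that has the same law as $\tilde g_l$, is independent of $(\tilde g_0,\ldots,\tilde g_{l-1},U_0,\ldots,U_{l-1})$, and satisfies $\P{g_l\neq\tilde g_l}\leq\beta(\mathcal F_{l-1},\sigma(\tilde g_l))\leq\bemix{a_n\De}$. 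Since $(g_0,\ldots,g_{l-1})$ is itself a function of $(\tilde g_0,\ldots,\tilde g_{l-1},U_0,\ldots,U_{l-1})$, the independence of $g_l$ from the earlier $g_j$'s is automatic, and a straightforward induction yields mutual independence of the whole sequence. The construction of the $g'_l$'s is identical, applied to the independent copy $X'$ with a disjoint batch of uniforms (enlarging the probability space further if necessary).

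The only delicate step will be to invoke the right form of Berbee's lemma, namely a version in which the coupled copy of a random vector is not merely marginally identical and independent of a prescribed sub-$\sigma$-algebra, but is also built measurably from the original vector together with an independent uniform variable. This formulation is standard (see, e.g., Rio's monograph on weakly dependent sequences, or Berbee's original paper) and applies verbatim in the present setting, so no adaptation specific to discretely observed diffusions is required.
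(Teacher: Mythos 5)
Your proposal is correct and takes essentially the same route as the paper: both apply Berbee's coupling lemma (the paper's citation is Lemma 5.1 in Rio's notes, which is precisely the measurable-construction version you describe) recursively to the even blocks, using the auxiliary i.i.d.\ uniforms as external randomization and the $a_n\De$ time gap to bound the coupling failure probability by $\beta_X(a_n\De)$. The only cosmetic difference is in how the two arguments phrase the final step upgrading pairwise independence to mutual independence of the constructed sequence (independence of $g_l$ from the whole history $\sigma(\mathcal A_{(0,l-1)}, U_0,\dots,U_{l-1})$ rather than just $\mathcal A_{(0,l-1)}$); both handle it correctly.
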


\begin{proof}
  For all $l_1, l_2$, define the $\sigma$-field $\mA_{(l_1, l_2)}$ generated by $\bra{\X}_{k \in H(l_1,l_2)}$, where $H(l_1,l_2) := \cup_{l=l_1}^{l_2} H_l$.
  For all $l$, we can find by definition that the $\sigma$-field generated by $\tilde{g}_l$ is a subset of $\mA_{(l,l)}$, i.e.~$\sigma \bra{\tilde{g}_l} \subset \mA_{(l,l)}$.
  Using Lemma 5.1 in \cite{ref.Rio}, there exists a random vector $g_l$, with the same law as $\tilde{g}_l$,
  independent of $\mA_{(0, l-1)}$, such that $\P{g_l \neq \tilde{g}_l} \leq \bemix{a_n \De}$.
  Furthermore $g_l$ is measurable with respect to the $\sigma$-field generated by $\mA_{(0, l)}$ and $U_l$.
  Therefore, for any $l$, $g_{l}$ is independent of $\bra{g_{l'}}_{l'=0}^{l-1}$,
  since for $l_1, l_2$ with $l_1 < l_2$, $g_{l_2}$ is independent of the $\sigma$-field generated by $\mA_{(0, l_1)}$ and $U_{l_1}$. 
  This completes the proof of the first claim. We can similarly prove the second claim. 
\end{proof}

Define
\begin{align*}
  g_{*, l} &:=g_{j^*,l},\qquad
  g'_{*, l} := g'_{j^*,l},\\
  \tilde{g}_{*, l} &:= \tilde{g}_{j^*, l}
  = \sum_{k \in H_{l}} ((f_{j^*}-f_0)(\X))^2, \\
  \tilde{g}'_{*, l} &:= \tilde{g}'_{j^*, l}
  = \sum_{k \in H_{l}} ((f_{j^*}-f_0)(\X'))^2,
\end{align*}
where $g_{j, l}$ and $g'_{j, l}$ denote the $j$-th components of $g_l$ and $g'_l$, respectively.

\begin{lem}
  \label{lem.5.4}
  For all $l=0,1,\dots,\mu_n-1$,
  \begin{align*}
    \abs{\E{g_{*, l}} - \E{\tilde{g}_{*, l}}}
    & \leq 4F^2 a_n \bemix{a_n \De}, \\
    \abs{\E{g'_{*, l}} - \E{\tilde{g}'_{*, l}}}
    & \leq 4F^2 a_n \bemix{a_n \De}.
  \end{align*}
\end{lem}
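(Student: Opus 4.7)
The plan is to observe that on the coupling event $\{g_l = \tilde g_l\}$ the difference $g_{*,l} - \tilde g_{*,l}$ vanishes, while off this event the difference is uniformly bounded; combined with $\P{g_l \neq \tilde g_l} \leq \bemix{a_n\De}$, this immediately yields the claim. The same three-line argument, applied to the primed copies, gives the second inequality.

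First I would record a uniform $L^\infty$ bound on every component of $\tilde g_l$ and $g_l$. Using $\|f_j\|_\infty \vee \|f_0\|_\infty \leq F$ and $|H_l| = a_n$, each component of $\tilde g_l$ satisfies $0 \leq \tilde g_{j,l} \leq (2F)^2 a_n = 4F^2 a_n$; since $g_l$ has the same distribution as $\tilde g_l$, its components obey the same bound. In particular $|g_{*,l} - \tilde g_{*,l}| \leq 4F^2 a_n$ almost surely, and the analogous bound holds for the primed quantities.

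Next I would exploit the coupling. On the event $\{g_l = \tilde g_l\}$ all $\mN_n$ components coincide, so $g_{j,l} = \tilde g_{j,l}$ for every $j \in \{1,\dots,\mN_n\}$; hence $g_{j^*,l} = \tilde g_{j^*,l}$ regardless of how the data-dependent index $j^*$ is chosen, and $g_{*,l} - \tilde g_{*,l}$ is supported on $\{g_l \neq \tilde g_l\}$. Combining this support property with the uniform bound from the previous step and with $|\E{Z}| \leq \E{|Z|}$ yields
\[
|\E{g_{*,l}} - \E{\tilde g_{*,l}}| \leq \E{|g_{*,l} - \tilde g_{*,l}| \1_{\{g_l \neq \tilde g_l\}}} \leq 4F^2 a_n \, \P{g_l \neq \tilde g_l} \leq 4F^2 a_n \bemix{a_n \De}.
\]
The second inequality follows by repeating this argument verbatim with $X', g'_l, \tilde g'_l$ in place of $X, g_l, \tilde g_l$.

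The only subtlety to keep in mind is that $j^*$ is random and measurable with respect to $\sigma(X_{k\De} : 0 \leq k \leq n)$, so it is not \emph{a priori} independent of the coupling; however, because the coupling identifies all $\mN_n$ components of $g_l$ and $\tilde g_l$ simultaneously, any data dependence of $j^*$ is washed out on the coupling event, and no further bookkeeping is needed. Beyond this, I do not foresee any genuine obstacle: the lemma is a routine blocking-type estimate, with the factor $4F^2$ coming directly from the trivial pointwise estimate $((f_j - f_0)(x))^2 \leq 4F^2$.
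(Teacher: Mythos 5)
Your proposal is correct and follows essentially the same three-step argument as the paper: bound each component of $\tilde g_l$ (hence of $g_l$, by equality in law) by $4F^2 a_n$, observe that $g_{*,l} - \tilde g_{*,l}$ vanishes on the coupling event $\{g_l = \tilde g_l\}$, and combine with $\P{g_l \neq \tilde g_l} \leq \bemix{a_n\De}$. The paper's only cosmetic difference is that it passes through the intermediate inclusion $\{g_{*,l}\neq \tilde g_{*,l}\}\subset\{g_l\neq \tilde g_l\}$ before invoking the coupling bound, which is exactly the point you address in your remark on the random index $j^*$.
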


\begin{proof}
By definition we have $|\tilde g_{j,l}|\leq 4F^2a_n$ for all $j$. Since $g_l$ has the same law as $\tilde g_l$, we also have $|g_{j,l}|\leq4F^2a_n$ a.s.~for all $j$. Consequently, 
  \begin{align*}
    \abs{\E{g_{*, l}} - \E{\tilde{g}_{*, l}}}
    & \leq \E{\abs{ g_{*, l} - \tilde{g}_{*, l} } \1_{\seq{g_{*, l} \neq \tilde{g}_{*, l}}}} 
     \leq 4F^2 a_n \P{g_{*, l} \neq \tilde{g}_{*, l}} \\
    & \leq 4F^2 a_n \P{g_l \neq \tilde{g}_l} 
     \leq 4F^2 a_n \bemix{a_n \De}.
  \end{align*}
  Hence we obtain the first inequality. We can similarly prove the second one. 
\end{proof}


\subsubsection{Step 2: Bernstein's inequality}

For $j=1,\dots,\mathcal N_n$, define
\begin{align*}
  r_j & := \bra{\frac{4F^2 a_n \log \mN_n}{\mu_n} \vee \frac{1}{\mu_n} \sum_{l=0}^{\mu_n-1} \E{g'_{j, l}}}^{1/2}, \\
  B_j &:= \abs{\frac{\sum_{l=0}^{\mu_n-1} \bra{g'_{j, l} - g_{j, l}}}{2F r_j}}.
\end{align*}
Then we set
\begin{align*}
r^* & := r_{j^*} = \bra{\frac{4F^2 a_n \log \mN_n}{\mu_n} \vee \frac{1}{\mu_n} \sum_{l=0}^{\mu_n-1} \E{g'_{*, l} \left| \seq{X_t}_{t\geq 0} \right.}}^{1/2},\\
B &:= \max_{1\leq j\leq\mathcal N_n} B_j.
\end{align*}

\begin{lem}
  \label{lem.bernstein_ineq}
  If $\log \mN_n \geq 7.4$,
  \begin{align*}
    \E{B}
    & \leq 3 \sqrt{a_n \mu_n \log \mN_n},\qquad
    \E{B^2}
     \leq 12 a_n \mu_n \log \mN_n.
  \end{align*}
\end{lem}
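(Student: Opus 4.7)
The strategy is to apply Bernstein's inequality to each $B_j$ separately, take a union bound over $j$, and then integrate the resulting tail estimate. The crucial input is that by construction both $(g_l)_{l=0}^{\mu_n-1}$ and $(g'_l)_{l=0}^{\mu_n-1}$ are independent sequences, and moreover the two sequences are independent of each other (since $X'$ is an independent copy of $X$ and the auxiliary $U_l$'s used in the coupling for the unprimed and primed sequences can be chosen independently). Hence, for each fixed $j$, the numerator $S_j := \sum_{l=0}^{\mu_n-1}(g'_{j,l}-g_{j,l})$ is a sum of $2\mu_n$ independent, centered random variables.

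For the Bernstein bound I need a variance estimate and an $L^\infty$ bound. Since $0\le g_{j,l}\le 4F^2 a_n$ and $g'_{j,l}\stackrel{d}{=} g_{j,l}$, each summand satisfies $|g'_{j,l}-g_{j,l}|\le 4F^2 a_n$ a.s. For the variance, $\Var(g'_{j,l}-g_{j,l}) = 2\Var(g_{j,l}) \le 2\E{g_{j,l}^2}\le 8F^2 a_n\,\E{g_{j,l}}$, and summing and using the defining inequality $r_j^2\ge \mu_n^{-1}\sum_l\E{g'_{j,l}}$ gives $\Var(S_j)\le 8F^2 a_n\mu_n r_j^2$. Bernstein's inequality thus yields
\[
\P{B_j\ge u}=\P{|S_j|\ge 2Fr_j u}\le 2\exp\bra{-\frac{u^2}{4a_n\mu_n+\tfrac{4Fa_n u}{3r_j}}}.
\]
The other half of the definition of $r_j$, namely $r_j\ge 2F\sqrt{a_n\log\mN_n/\mu_n}$, implies $F/r_j\le \tfrac{1}{2}\sqrt{\mu_n/(a_n\log\mN_n)}$, so the second term in the denominator is at most $\tfrac{2u}{3}\sqrt{a_n\mu_n/\log\mN_n}$. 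A union bound over $j=1,\dots,\mN_n$ therefore gives
\[
\P{B\ge u}\le 2\mN_n\exp\bra{-\frac{u^2}{4a_n\mu_n+\tfrac{2u}{3}\sqrt{a_n\mu_n/\log\mN_n}}}.
\]

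Finally, I would bound the moments via $\E{B^p}=p\int_0^\infty u^{p-1}\P{B\ge u}\,du$ for $p\in\{1,2\}$, splitting the integral at a threshold $u_0$ of order $\sqrt{a_n\mu_n\log\mN_n}$ (specifically $u_0=3\sqrt{a_n\mu_n\log\mN_n}$ for the first bound and $u_0=\sqrt{12 a_n\mu_n\log\mN_n}$ for the second). In the sub-Gaussian regime $u\le 6\sqrt{a_n\mu_n\log\mN_n}$, the linear term in the denominator is dominated by $4a_n\mu_n$, so the exponent is at least $u^2/(8a_n\mu_n)$; in the sub-exponential regime beyond that the exponent becomes linear in $u$. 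The hypothesis $\log\mN_n\ge 7.4$ is used precisely to absorb the $\log(2\mN_n)$ factor from the union bound into the tail exponent, ensuring that the integral over $[u_0,\infty)$ is negligible compared to $u_0^p$.

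The main obstacle is purely computational: carefully tracking constants through the Bernstein/union-bound/tail-integration pipeline to land on the sharp numerical values $3$ and $12$, and verifying that $\log\mN_n\ge 7.4$ is exactly the threshold that makes the residual tail integrals dominated by the main contribution $u_0^p$.
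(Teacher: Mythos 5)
Your overall strategy — Bernstein's inequality for each $B_j$, a union bound over $j$, then tail integration — is exactly the paper's, and your preparatory steps are correct: the observation that $(g_l)$ and $(g'_l)$ can be taken mutually independent (by using independent uniforms in the two couplings) is indeed needed for the variance identity $\Var(g'_{j,l}-g_{j,l})=2\Var(g_{j,l})$, your $L^\infty$ and variance bounds match the paper's, and the resulting Bernstein tail estimate is the same.

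However, the final step as you describe it cannot close. If you split at $u_0 = 3\sqrt{a_n\mu_n\log\mN_n}$, the elementary bound $\E{B}\leq u_0 + \int_{u_0}^\infty\P{B\geq u}\,du$ already starts from the target value $3\sqrt{a_n\mu_n\log\mN_n}$, and the tail integral is strictly positive, so you will overshoot; likewise $u_0^2=12a_n\mu_n\log\mN_n$ leaves no room for the tail in the $\E{B^2}$ bound. The threshold must be chosen \emph{strictly below} the target so that the slack absorbs the tail. This is precisely what the paper does: it sets $\alpha=\gamma\sqrt{a_n\mu_n\log\mN_n}$ with $\gamma=(1+\sqrt{37})/3\approx2.36$, the positive root of $3\gamma^2-2\gamma-12=0$. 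That choice is what makes the Bernstein exponent, after the linearization $\frac{t^2}{at+b}\geq\frac{\alpha t}{a\alpha+b}$ valid for $t\geq\alpha$, collapse to exactly $\frac{\log\mN_n}{\alpha}\,t$. The tail integral then evaluates in closed form to $\frac{2\alpha}{\log\mN_n}$ (resp.\ $\frac{4\alpha^2}{\log\mN_n}(1+\frac{1}{\log\mN_n})$), yielding $\E{B}\leq\bra{1+\tfrac{2}{\log\mN_n}}\gamma\sqrt{a_n\mu_n\log\mN_n}$, and the hypothesis $\log\mN_n\geq7.4$ is used only to check that the prefactor $(1+2/\log\mN_n)\gamma$ is at most $3$ (and analogously $(1+8/\log\mN_n)\gamma^2\leq12$). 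So the role of $7.4$ is to control the \emph{prefactor} from the tail, not to make the tail ``negligible.'' Your sub-Gaussian/sub-exponential split could in principle be pushed through, but the threshold would have to be something like $\gamma\sqrt{a_n\mu_n\log\mN_n}$, not $3\sqrt{a_n\mu_n\log\mN_n}$, and without the quadratic $3\gamma^2=2\gamma+12$ at hand the constant-chasing is unlikely to land exactly on $3$ and $12$.
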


\begin{proof}
  Define $\gamma := \frac{1 + \sqrt{37}}{3}$ and $\alpha=\alpha_n := \gamma \sqrt{a_n \mu_n \log \mN_n}$. 
  Note that $\gamma$ solves the equation $3\gamma^2-2\gamma-12=0$. 
  For all $j,l$, we have by construction
  \begin{align*}
    \abs{\frac{g'_{j, l} - g_{j, l}}{2F r_j}}
    & \leq \frac{\bra{2F}^2 a_n}{2F \sqrt{\frac{4F^2 a_n \log \mN_n}{\mu_n}}} = \sqrt{\frac{a_n \mu_n}{\log \mN_n}}
  \end{align*}
  and
  \begin{align*}
    \Var \bra{\frac{g'_{j, l} - g_{j, l}}{2F r_j}}
    & \leq \bra{2F r_j}^{-2} \cdot 2 \E{\bra{g'_{j, l}}^2} \\
    & \leq \frac{2 \cdot 4F^2 a_n \E{g'_{j, l}}}{4F^2 \frac{1}{\mu_n} \sum_{l'=0}^{\mu_n-1} \E{g'_{j, l'}}} 
     = \frac{2 a_n \mu_n \E{g'_{j, l}}}{\sum_{l'=0}^{\mu_n-1} \E{g'_{j, l'}}}.
  \end{align*}
  Hence
  \begin{align*}
    \sum_{l=0}^{\mu_n-1} \Var \bra{\frac{g'_{j, l} - g_{j, l}}{2F r_j}}
    & \leq 2 a_n \mu_n.
  \end{align*}
  Using Bernstein's inequality (cf. Lemma 2.2.9 in \cite{ref.Bernstein_ineq}), we have for all $ t \geq \alpha$
  \begin{align*}
    \P{B_j \geq t}
    & \leq 2 \exp \bra{-\frac{\frac{1}{2} t^2}{\frac{1}{3} \sqrt{\frac{a_n \mu_n}{\log \mN_n}} t + 2 a_n \mu_n}} \\
    & \leq 2 \exp \bra{-\frac{\frac{1}{2} \gamma \sqrt{a_n \mu_n \log \mN_n}}{\frac{1}{3} \sqrt{\frac{a_n \mu_n}{\log \mN_n}} \gamma \sqrt{a_n \mu_n \log \mN_n} + 2 a_n \mu_n} t} \\
    & = 2 \exp \bra{-\frac{3 \gamma^2}{2 \gamma + 12} \frac{\log \mN_n}{\al} t} 
     = 2 \exp \bra{-\frac{\log \mN_n}{\al} t}.
  \end{align*}
  Thus, a union bound argument yields
  \begin{align*}
    \P{B \geq t}
    & \leq \sum_{j} \P{B_j \geq t} 
     \leq \sum_{j} 2 \exp \bra{-\frac{\log \mN_n}{\alpha} t} \\
    & = 2 \mN_n \exp \bra{-\frac{\log \mN_n}{\alpha} t}.
  \end{align*}
  We therefore find
  \begin{align*}
    \E{B}
    & = \int_{0}^{\infty} \P{B \geq t} dt \\
    & \leq \int_{0}^{\alpha} 1 dt + \int_{\alpha}^{\infty} 2 \mN_n \exp \bra{-\frac{\log \mN_n}{\alpha} t} dt \\
    & = \alpha + 2 \mN_n \frac{\alpha}{\log \mN_n} \exp \bra{-\frac{\log \mN_n}{\alpha} \alpha} \\
    & = \bra{1 + \frac{2}{\log \mN_n}} \gamma \sqrt{a_n \mu_n \log \mN_n} 
    \leq 3 \sqrt{a_n \mu_n \log \mN_n},
  \end{align*}
  where the last inequality follows from $\log\mN_n\geq7.4$. 
  Similarly, 
  \begin{align*}
    \E{B^2}
    & = 2\int_{0}^{\infty} t\P{B \geq t} dt \\
    & \leq 2\int_{0}^{\alpha} t dt + \int_{\alpha}^{\infty} 4t \mN_n \exp \bra{-\frac{\log \mN_n}{\alpha} t} dt \\
    & = \alpha^2 + 4 \mN_n \bra{\frac{\alpha^2}{\log \mN_n} + \frac{\alpha^2}{\log^2 \mN_n}}\exp \bra{-\frac{\log \mN_n}{\alpha} \alpha}\\
    & \leq \bra{1 + \frac{8}{\log \mN_n}} \gamma^2 a_n \mu_n \log \mN_n 
    \leq 12 a_n \mu_n \log \mN_n.
  \end{align*}
  This completes the proof.
\end{proof}

\begin{lem}
  \label{lem.5.6}
  \begin{align*}
    \E{ \sum_{l=0}^{\mu_n-1} g'_{*, l}}
    \leq (1+\ep) \E{ \sum_{l=0}^{\mu_n-1} g_{*, l}} + \frac{24(1+\ep)^2}{\ep} F^2 a_n \log \mN_n.
  \end{align*}
\end{lem}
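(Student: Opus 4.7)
The statement is a ``one-sided desymmetrization with loss $(1+\ep)$'' that will let us later replace the $X'$-based risk by the empirical risk at the cost of a small multiplicative constant and an additive noise term. My starting point is the tautology $|\sum_{l=0}^{\mu_n-1}(g'_{*,l}-g_{*,l})|=2Fr^{*}B_{j^{*}}\leq 2Fr^{*}B$, which comes directly from the definitions of $B_j$ and $B$. Dropping the absolute value gives
\[
\sum_{l=0}^{\mu_n-1} g'_{*,l}-\sum_{l=0}^{\mu_n-1} g_{*,l}\leq 2Fr^{*}B,
\]
and after taking expectations, Cauchy--Schwarz together with the second moment bound from Lemma \ref{lem.bernstein_ineq} yields $\E{\sum_l g'_{*,l}}-\E{\sum_l g_{*,l}}\leq 2F\sqrt{\E{(r^{*})^{2}}}\sqrt{12 a_n\mu_n\log \mN_n}$.

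Next I would bound $\E{(r^{*})^{2}}$. The index $j^{*}$ is a function of $\hf$, hence is measurable with respect to $\sigma(X_{k\Delta})_{k=0}^{n}$, while each $g'_{j,l}$ is built from $X'$ and $U_l$, both independent of $X$. Conditioning on $\{X_t\}_{t\geq 0}$ therefore freezes $j^{*}$, which is exactly the content of the alternative formula
\[
r^{*}=\Bigl(\tfrac{4F^{2}a_n\log\mN_n}{\mu_n}\vee\tfrac{1}{\mu_n}\sum_{l=0}^{\mu_n-1}\E{g'_{*,l}\mid \{X_t\}_{t\geq 0}}\Bigr)^{1/2}
\]
recorded in the setup. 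Using $a\vee b\leq a+b$ for non-negative $a,b$ (valid here since $g'_{*,l}\geq 0$) and taking expectations, I obtain $\E{(r^{*})^{2}}\leq \frac{4F^{2}a_n\log\mN_n}{\mu_n}+\frac{1}{\mu_n}\E{\sum_{l}g'_{*,l}}$.

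To finish, I plug this into the inequality from Step 1 and use the Young-type bound $2\sqrt{xy}\leq x/c+cy$ with $x=\mu_n\E{(r^{*})^{2}}$, $y=12F^{2}a_n\log\mN_n$, and the specific choice $c=(1+\ep)/\ep$ (so that $c/(c-1)=1+\ep$). This produces
\[
\E{\sum_l g'_{*,l}}-\E{\sum_l g_{*,l}}\leq \tfrac{\ep}{1+\ep}\E{\sum_l g'_{*,l}}+\Bigl(\tfrac{4\ep}{1+\ep}+\tfrac{12(1+\ep)}{\ep}\Bigr)F^{2}a_n\log\mN_n,
\]
and after moving the first right-hand term to the left and multiplying through by $(1+\ep)$, the additive constant becomes $4\ep+\frac{12(1+\ep)^{2}}{\ep}$, which is at most $\frac{24(1+\ep)^{2}}{\ep}$ since $4\ep^{2}\leq 12(1+\ep)^{2}$. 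This is the claimed bound.

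\textbf{Main obstacle.} The only subtle point is the conditional-expectation identity in Step 2: one has to recognize that $j^{*}$ is $\sigma(X)$-measurable while the coupled variables $(g'_{j,l})_{j}$, constructed via the auxiliary uniforms $U_l$ from $\tilde g'_{l}$, remain independent of $X$. Once this is in place, removing the $\vee$ inside the expectation is free, and the rest is just Cauchy--Schwarz plus a correctly tuned Young inequality.
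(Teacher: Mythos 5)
Your proof is correct and reaches the claimed constant. The starting tautology $|\sum_l(g'_{*,l}-g_{*,l})|=2Fr^*B_{j^*}\le 2Fr^*B$, the Cauchy--Schwarz step applied to $\E{r^*B}$, and a Young-type inequality to resolve the resulting implicit quadratic are all also the ingredients of the paper's argument, so the two proofs are in the same spirit. There is, however, a small but genuine streamlining in yours: you apply Cauchy--Schwarz directly to the product $r^*B$ and then bound $\E{(r^*)^2}$ by $a\vee b\le a+b$ after squaring, which means the only input you need from Lemma~\ref{lem.bernstein_ineq} is the second-moment bound $\E{B^2}\le 12a_n\mu_n\log\mN_n$. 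The paper instead splits $r^*$ additively via $(a\vee b)^{1/2}\le a^{1/2}+b^{1/2}$ before applying Cauchy--Schwarz, which generates an extra term that it controls with the first-moment bound $\E{B}\le 3\sqrt{a_n\mu_n\log\mN_n}$, and it then invokes Eq.~(C.4) of \cite{ref.SH} for the quadratic step; both routes close with the same crude bound to reach $\tfrac{24(1+\ep)^2}{\ep}F^2a_n\log\mN_n$. Your justification of the conditional-expectation form of $r^*$ (that $j^*$ is $\sigma(X)$-measurable while the coupled variables $g'_{j,l}$ are independent of $X$) is precisely the measurability fact the paper uses when it writes $r^*=r_{j^*}$ in that form, so no gap there.
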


\begin{proof}
By definition we have
\begin{align*}
    & \abs{ \E{ \sum_{l=0}^{\mu_n-1} g'_{*, l}} - \E{ \sum_{l=0}^{\mu_n-1} g_{*, l}} } 
     \leq \E{\abs{ \sum_{l=0}^{\mu_n-1} \bra{g'_{*, l} - g_{*, l}} }} 
     \leq 2F \E{r^* B} .
  \end{align*}
  Using the Schwarz inequality, we obtain
  \begin{align*}
    \E{r^* B}
    &\leq\E{ \frac{1}{\mu_n} \sum_{l=0}^{\mu_n-1} \E{g'_{*, l} \mid \bra{X_t}_{t\geq 0} } }^{1/2} \E{B^2}^{1/2} + \sqrt{\frac{4F^2 a_n \log \mN_n}{\mu_n}} \E{B}\\
    & =  \E{ \sum_{l=0}^{\mu_n-1} g'_{*, l}}^{1/2} \bra{\frac{\E{B^2}}{\mu_n}}^{1/2} + 2F \sqrt{\frac{a_n \log \mN_n}{\mu_n}} \E{B}.
  \end{align*}
Consequently, 
  \begin{align*}
    & \abs{ \E{ \sum_{l=0}^{\mu_n-1} g'_{*, l}} - \E{ \sum_{l=0}^{\mu_n-1} g_{*, l}} } \\
    & \leq 2 \E{ \sum_{l=0}^{\mu_n-1} g'_{*, l}}^{1/2} \bra{\frac{F^2 \E{B^2}}{\mu_n}}^{1/2} + 4F^2 \sqrt{\frac{a_n \log \mN_n}{\mu_n}} \E{B}.
  \end{align*}
  Hence, using Eq.(C.4) of \cite{ref.SH}, we find
  \begin{align*}
    \E{ \sum_{l=0}^{\mu_n-1} g'_{*, l}}
    & \leq (1+\ep) \E{ \sum_{l=0}^{\mu_n-1} g_{*, l}} \\
    & \quad + (1+\ep) \cdot 4F^2 \sqrt{\frac{a_n \log \mN_n}{\mu_n}} \E{B} + \frac{(1+\ep)^2}{\ep} \frac{F^2 \E{B^2}}{\mu_n} \\
    & \leq (1+\ep) \E{ \sum_{l=0}^{\mu_n-1} g_{*, l}} + \frac{24(1+\ep)^2}{\ep} F^2 a_n \log \mN_n,
  \end{align*}
  where the last inequality follows from Lemma \ref{lem.bernstein_ineq} and $1\leq(1+\ep)/\ep$.
\end{proof}


\subsubsection{Step 3: Conclusion}

  Using Lemmas \ref{lem.5.4} and \ref{lem.5.6}, we deduce
  \begin{align*}
    \E{ \sum_{l=0}^{\mu_n-1} \tilde{g}'_{*, l}}
    & \leq \E{ \sum_{l=0}^{\mu_n-1} g'_{*, l}} + 4F^2 a_n \mu_n \bemix{a_n \De} \\
    & \leq (1+\ep) \E{ \sum_{l=0}^{\mu_n-1} g_{*, l}} 
     + \frac{24(1+\ep)^2}{\ep} F^2 a_n \log \mN_n + 4F^2 a_n \mu_n \bemix{a_n \De} \\
    & \leq (1+\ep) \E{ \sum_{l=0}^{\mu_n-1} \tilde{g}_{*, l}} \\
    &\quad+ \frac{24(1+\ep)^2}{\ep} F^2 a_n \log \mN_n + (2+\ep) \cdot 4F^2 a_n \mu_n \bemix{a_n \De}.
  \end{align*}
  Additionally, we define
  \begin{align*}
    \tilde{g}_{2, *, l} := \sum_{k \in H_{2,l}} ((f_{j^*}-f_0)(\X))^2, \\
    \tilde{g}'_{2, *, l} := \sum_{k \in H_{2,l}} ((f_{j^*}-f_0)(\X'))^2,
  \end{align*}
  and then we find in a similar way,
  \begin{align*}
    \E{ \sum_{l=0}^{\mu_n-1} \tilde{g}'_{2, *, l}}
    & \leq (1+\ep) \E{ \sum_{l=0}^{\mu_n-1} \tilde{g}_{2, *, l}} \\
    & \quad + \frac{24(1+\ep)^2}{\ep} F^2 a_n \log \mN_n + (2+\ep) \cdot 4F^2 a_n \mu_n \bemix{a_n \De}.
  \end{align*}
  Now, note that
 \begin{align*}
    \eR{f_{j^*}}&=\frac{1}{n}\E{ \sum_{l=0}^{\mu_n-1} \tilde{g}'_{*, l}+ \sum_{l=0}^{\mu_n-1} \tilde{g}'_{2, *, l}
    +\sum_{k=2a_n\mu_n}^{n-1}((f_{j^*}-f_0)(\X'))^2},\\
    \heR{f_{j^*}}&=\frac{1}{n}\E{ \sum_{l=0}^{\mu_n-1} \tilde{g}_{*, l}+ \sum_{l=0}^{\mu_n-1} \tilde{g}_{2, *, l}
    +\sum_{k=2a_n\mu_n}^{n-1}((f_{j^*}-f_0)(\X))^2}.
  \end{align*}
  To evaluate remaining terms with indices $k \in \seq{2a_n\mu_n, \dots, n-1}$, we use the following inequality:
  \begin{align*}
    \abs{ ((f_{j^*}-f_0)(\X))^2 - ((f_{j^*}-f_0)(\X'))^2 }
    & \leq 4F^2.
  \end{align*}
  Finally, we use the following inequalities:
  \begin{align*}
    \abs{\eR{\hf} - \eR{f_{j^*}}}
    & \leq 4F\de, \\
    \abs{\heR{\hf} - \heR{f_{j^*}}}
    & \leq 4F\de.
  \end{align*}
  Combining all the equations above, we obtain the desired result. \qed


\subsection{Proof of Lemma \ref{lem.oracle_ineq2}}\label{sec:proof-martingale}


As in the previous subsection, we can take a $\delta$-net $\{f_1,\dots,f_{\mN_n}\}$ of $\mF$ with respect to $\|\cdot\|_\infty$, and there is a random variable $j^*$ such that $|\hf-f_{j^*}| \leq \delta$. 

For $k=0,1,\dots,n-1$, define
\begin{align*}
  \I & := \frac{1}{\De} \int_{k\De}^{(k+1)\De} \bra{b^{\mathsf i}(X_s) - b^{\mathsf i}(\X)} ds, \\
  \Z' & := \frac{1}{\De} \int_{k\De}^{(k+1)\De} \Sigma^{\mathsf i}(\X) dw_s, \\
  \Z'' & := \frac{1}{\De} \int_{k\De}^{(k+1)\De} \bra{\Sigma^{\mathsf i}(X_s) - \Sigma^{\mathsf i}(\X)} dw_s,
\end{align*}
where $\Sigma^{\mathsf i} := (\Sigma_{\mathsf i1}, \dots, \Sigma_{\mathsf id})$, and define $\Z = \Z' + \Z''$.
Then, $\Y$ is decomposed as 
\begin{equation}\label{eq:Y-reg}
\Y = b^{\mathsf i}(\X) + \I + \Z,
\end{equation}
where $\Y$ is defined in Section \ref{sec.Estimator}. 
We follow similar arguments as in Parts (II) and (III) of the proof of \cite[Lemma 4]{ref.SH}, but there are several differences. 
We begin by an auxiliary result. 
\begin{lem}\label{lemma:gronwall}
  Define $A_t := \{X_t \in [0,1]^d\}$ for every $t\geq0$. 
  Let $C_{\text{Lip}} := 4\bra{C_{b}^2 + C_\Sigma^2} e^{4\bra{C_{b}'^2 + C_\Sigma'^2}}$.
  For any $k \in \seq{0,\dots,n-1}$ and $s \in [k\De,(k+1)\De]$, 
  \begin{align*}
    \E{\abs{X_s-\X}^2 \1_{A_{k\De}}} \leq C_{\text{Lip}} (s-k\De).
  \end{align*}
\end{lem}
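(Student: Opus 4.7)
The plan is to introduce $\phi(s):=\E{|X_s-\X|^2\1_{A_{k\De}}}$ for $s\in[k\De,(k+1)\De]$ and derive a Gronwall-type integral inequality for $\phi$, exploiting the fact that $\1_{A_{k\De}}$ is $\mathcal F_{k\De}$-measurable together with the global Lipschitz assumption on $b$ and $\Sigma$ (which gives control of $|b(\X)|$ and $|\Sigma(\X)|$ via $C_b$ and $C_\Sigma$ on $A_{k\De}$).

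First I would expand $X_s-\X=\int_{k\De}^s b(X_u)\,du+\int_{k\De}^s\Sigma(X_u)\,dw_u$ from the SDE \eqref{eq.mod}, use $(a+b)^2\leq 2a^2+2b^2$, apply Jensen's inequality to the drift integral and the It\^o isometry to the stochastic integral after conditioning on $\mathcal F_{k\De}$. Multiplying by $\1_{A_{k\De}}$ (which can be pulled in freely since it is $\mathcal F_{k\De}$-measurable) and taking expectations yields
\begin{equation*}
\phi(s)\leq 2(s-k\De)\int_{k\De}^s\E{|b(X_u)|^2\1_{A_{k\De}}}\,du+2\int_{k\De}^s\E{|\Sigma(X_u)|^2\1_{A_{k\De}}}\,du.
\end{equation*}

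Next I would invoke Assumption \ref{ass.SDE1}: global Lipschitz gives $|b(X_u)|^2\leq 2|b(\X)|^2+2(C'_b)^2|X_u-\X|^2$, and on $A_{k\De}$ we have $|b(\X)|\leq C_b$ (similarly for $\Sigma$), so $\E{|b(X_u)|^2\1_{A_{k\De}}}\leq 2C_b^2+2(C'_b)^2\phi(u)$ and $\E{|\Sigma(X_u)|^2\1_{A_{k\De}}}\leq 2C_\Sigma^2+2(C'_\Sigma)^2\phi(u)$. Plugging these in and using $s-k\De\leq\De\leq 1$ from Assumption \ref{ass.SDE6} to absorb the extra factor $(s-k\De)$ in front of the quadratic term gives
\begin{equation*}
\phi(s)\leq 4(C_b^2+C_\Sigma^2)(s-k\De)+4\bra{(C'_b)^2+(C'_\Sigma)^2}\int_{k\De}^s\phi(u)\,du.
\end{equation*}
Gronwall's inequality with the nondecreasing free term $4(C_b^2+C_\Sigma^2)(s-k\De)$ then yields $\phi(s)\leq 4(C_b^2+C_\Sigma^2)(s-k\De)\exp\bra{4((C'_b)^2+(C'_\Sigma)^2)(s-k\De)}$, and a final application of $s-k\De\leq 1$ recovers exactly the constant $C_{\text{Lip}}$ defined in the statement.

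The main technical delicacy is the legitimacy of the It\^o isometry step, which requires $\E\int_{k\De}^s|\Sigma(X_u)|^2\,du<\infty$; this is not guaranteed a priori because no moment assumption is placed on $\eta$. I would handle this by the standard localization: apply the entire argument to $X$ stopped at $\tau_N:=\inf\{t>k\De:|X_t|>N\}$, on which $\Sigma(X_u)$ is bounded, obtain the bound for $\E{|X_{s\wedge\tau_N}-\X|^2\1_{A_{k\De}}}$ uniformly in $N$, and then let $N\to\infty$ using Fatou's lemma. Apart from this localization, every step is a short and routine SDE estimate.
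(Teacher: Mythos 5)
Your proof is correct and follows essentially the same route as the paper: expand $X_s-X_{k\De}$ via the SDE, apply $(a+b)^2\leq 2a^2+2b^2$, use Jensen/Cauchy--Schwarz on the drift integral and the It\^o isometry on the martingale part, bound $|b(X_u)|^2$ and $|\Sigma(X_u)|^2$ by their values at $X_{k\De}$ (controlled on $A_{k\De}$ by $C_b,C_\Sigma$) plus Lipschitz remainders, drop the $(s-k\De)\leq 1$ factor, and close with Gronwall. The constant comes out identically. Your remark on localizing to justify the It\^o isometry is a sound extra precaution that the paper omits; note however that the argument only needs the inequality $\E{M_{s}^2\1_{A_{k\De}}}\leq\E{\langle M\rangle_{s}\1_{A_{k\De}}}$, which holds for any continuous local martingale by Fatou's lemma and optional stopping, so one can also dispense with the limiting step and simply invoke that one-sided bound.
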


\begin{proof}
  Using the It\^o isometry,
  \begin{align*}
    & \E{\abs{X_s-\X}^2 \1_{A_{k\De}}} \\
    & = \E{\sum_{i=1}^d \bra{ \int_{k\De}^s b^{\mathsf i} (X_u) du + \int_{k\De}^s \Sigma^{\mathsf i} (X_u) dw_u }^2 \1_{A_{k\De}}} \\
    & \leq 2\E{\sum_{i=1}^d \bra{ \int_{k\De}^s b^{\mathsf i} (X_u) \1_{A_{k\De}} du }^2 + \sum_{i=1}^d \int_{k\De}^s \Sigma^{\mathsf i} (X_u) \Sigma^{\mathsf i} (X_u)^\top \1_{A_{k\De}} du } \\
    & \leq 2\E{\int_{k\De}^s \sum_{i=1}^d \bra{ \bra{b^{\mathsf i} (X_u)}^2 + \Sigma^{\mathsf i} (X_u) \Sigma^{\mathsf i} (X_u)^\top } \1_{A_{k\De}} du },
  \end{align*}
  where the last inequality follows from the Schwarz inequality and $s-k\De \leq 1$.
  Additionally, for any $u \in [k\De,(k+1)\De]$,
  \begin{align*}
    \bra{b^{\mathsf i} (X_u)}^2
    & = \bra{b^{\mathsf i} (\X) + \bra{b^{\mathsf i} (X_u) - b^{\mathsf i} (\X)}}^2 \\
    & \leq 2\bra{b^{\mathsf i} (\X)}^2 + 2\bra{b^{\mathsf i} (X_u) - b^{\mathsf i} (\X)}^2
  \end{align*}
  and
  \begin{align*}
    \Sigma^{\mathsf i} (X_u) \Sigma^{\mathsf i} (X_u)^\top
    & = \sum_j \bra{\Sigma^{\mathsf ij} (\X) + \bra{\Sigma^{\mathsf ij} (X_u) - \Sigma^{\mathsf ij} (\X)}}^2 \\
    & \leq 2\sum_j \bra{\Sigma^{\mathsf ij} (\X)}^2 + 2\sum_j \bra{\Sigma^{\mathsf ij} (X_u) - \Sigma^{ij} (\X)}^2.
  \end{align*}
  Using these inequalities and Assumption \ref{ass.SDE1}, we conclude
  \begin{align*}
    & \E{\abs{X_s-\X}^2 \1_{A_{k\De}}} \\
    & \leq 4\E{\int_{k\De}^s C_{b}^2 + C_\Sigma^2 + \bra{C_{b}'^2 + C_\Sigma'^2} \abs{X_u - \X}^2 \1_{A_{k\De}} du } \\
    & = 4\bra{C_{b}^2 + C_\Sigma^2} (s-k\De) + 4\bra{C_{b}'^2 + C_\Sigma'^2} \int_{k\De}^s \E{\abs{X_u - \X}^2 \1_{A_{k\De}}} du.
  \end{align*}
  Therefore, we obtain the desired result using Gronwall's inequality and Assumption \ref{ass.SDE6}.
\end{proof}

The above lemma yields the following estimates for $I_{k\De}$ and $Z_{k\De}''$:
\begin{cor}
  \label{cor.I_ineq}
  $\E{\I^2 \1_{A_{k\De}}} \leq\frac{1}{2} C_{b}'^2 C_{\text{Lip}} \De$ for any $k \in \seq{0,\dots,n-1}$.
\end{cor}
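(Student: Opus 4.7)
The plan is to bound $\I^2$ by applying the Cauchy--Schwarz inequality to move the square inside the time integral, then invoke the global Lipschitz property of $b$ from Assumption \ref{ass.SDE1} to reduce everything to a bound on $\E{|X_s-\X|^2 \1_{A_{k\De}}}$, which is precisely what Lemma \ref{lemma:gronwall} provides.

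Concretely, I would first apply Cauchy--Schwarz (equivalently, Jensen's inequality for the uniform probability measure on $[k\De,(k+1)\De]$) to the definition of $\I$ to obtain
\[
\I^2 \;\leq\; \frac{1}{\De}\int_{k\De}^{(k+1)\De}\bra{b^{\mathsf i}(X_s) - b^{\mathsf i}(\X)}^2 ds.
\]
Multiplying by $\1_{A_{k\De}}$, taking expectations, and swapping expectation and time integral via Fubini--Tonelli yields
\[
\E{\I^2 \1_{A_{k\De}}} \;\leq\; \frac{1}{\De}\int_{k\De}^{(k+1)\De}\E{\bra{b^{\mathsf i}(X_s) - b^{\mathsf i}(\X)}^2 \1_{A_{k\De}}}\, ds.
\]

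Next, since Assumption \ref{ass.SDE1} implies $|b^{\mathsf i}(x) - b^{\mathsf i}(y)| \leq |b(x) - b(y)| \leq C_b'|x-y|$, the integrand is bounded by $C_b'^2\,\E{|X_s - \X|^2 \1_{A_{k\De}}}$. Lemma \ref{lemma:gronwall} then gives $\E{|X_s - \X|^2 \1_{A_{k\De}}} \leq C_{\text{Lip}}(s - k\De)$, so
\[
\E{\I^2 \1_{A_{k\De}}} \;\leq\; \frac{C_b'^2 C_{\text{Lip}}}{\De}\int_{k\De}^{(k+1)\De}(s - k\De)\, ds \;=\; \frac{1}{2} C_b'^2 C_{\text{Lip}} \De,
\]
which is the claimed bound.

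There is no genuine obstacle here: all of the substantive work has been absorbed into Lemma \ref{lemma:gronwall}, and the corollary is a short computation on top of it. The only point requiring minor care is to use the sharp Cauchy--Schwarz/Jensen step at the outset (rather than, say, a crude supremum bound) so that the resulting prefactor matches the $1/2$ announced in the statement.
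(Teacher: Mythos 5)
Your proof is correct and follows essentially the same route as the paper: Cauchy--Schwarz (Jensen) to move the square inside the time integral, Fubini to exchange expectation and integral, the global Lipschitz bound on $b^{\mathsf i}$ from Assumption \ref{ass.SDE1}, and then Lemma \ref{lemma:gronwall} followed by integrating $(s-k\De)$ over the sampling interval to obtain the factor $\tfrac{1}{2}\De$. The only difference is that you spell out the final elementary integral, which the paper leaves implicit.
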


\begin{proof}
Using the Schwarz inequality and Assumption \ref{ass.SDE1},
  \begin{align*}
    \E{\I^2 \1_{A_{k\De}}}
    & \leq \E{\frac{1}{\De} \int_{k\De}^{(k+1)\De} \bra{b^{\mathsf i} (X_s) - b^{\mathsf i} (\X)}^2 \1_{A_{k\De}} ds} \\
    & = \frac{1}{\De} \int_{k\De}^{(k+1)\De} \E{\bra{b^{\mathsf i} (X_s) - b^{\mathsf i} (\X)}^2 \1_{A_{k\De}}} ds \\
    & \leq \frac{1}{\De} C_{b}'^2 \int_{k\De}^{(k+1)\De} \E{\abs{X_s-\X}^2 \1_{A_{k\De}}} ds.
  \end{align*}
  Lemma \ref{lemma:gronwall} yields the desired bound. 
\end{proof}

\begin{cor}
  \label{cor.Z_ineq}
  $\E{\Z''^2 \1_{A_{k\De}}} \leq\frac{1}{2} C_\Sigma'^2 C_{\text{Lip}}$ for any $k \in \seq{0,\dots,n-1}$.
\end{cor}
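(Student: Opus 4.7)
The plan is to apply the Itô isometry to the stochastic integral defining $\Z''$, then use the global Lipschitz continuity of $\Sigma$ from Assumption \ref{ass.SDE1}, and finally invoke Lemma \ref{lemma:gronwall} for the integrand. The argument parallels the one used for Corollary \ref{cor.I_ineq}, but replaces the Cauchy--Schwarz step (for the Lebesgue integral in the drift term) with the Itô isometry (for the stochastic integral in the diffusion term). Since $\1_{A_{k\De}}$ is $\mathcal F_{k\De}$-measurable and the integrand $\Sigma^{\mathsf i}(X_s)-\Sigma^{\mathsf i}(\X)$ is adapted, the indicator pulls through the isometry without trouble.

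Concretely, I would first write
\[
\E{\Z''^2 \1_{A_{k\De}}} = \frac{1}{\De^2}\,\E{\left(\int_{k\De}^{(k+1)\De}\bra{\Sigma^{\mathsf i}(X_s)-\Sigma^{\mathsf i}(\X)}\,dw_s\right)^2 \1_{A_{k\De}}},
\]
and apply the Itô isometry (componentwise in the $d$-dimensional Brownian motion) to obtain
\[
\E{\Z''^2 \1_{A_{k\De}}} = \frac{1}{\De^2}\int_{k\De}^{(k+1)\De}\E{\abs{\Sigma^{\mathsf i}(X_s)-\Sigma^{\mathsf i}(\X)}^2\1_{A_{k\De}}}\,ds.
\]
Then Assumption \ref{ass.SDE1} gives $\abs{\Sigma^{\mathsf i}(X_s)-\Sigma^{\mathsf i}(\X)}^2\leq C_\Sigma'^2\abs{X_s-\X}^2$, and Lemma \ref{lemma:gronwall} yields $\E{\abs{X_s-\X}^2\1_{A_{k\De}}}\leq C_{\text{Lip}}(s-k\De)$. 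Substituting and evaluating $\int_{k\De}^{(k+1)\De}(s-k\De)\,ds=\De^2/2$ produces the claimed constant $\tfrac12 C_\Sigma'^2 C_{\text{Lip}}$.

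There is no real obstacle here; the only mild subtlety is justifying that the indicator $\1_{A_{k\De}}$ may be placed inside the Itô isometry, which is immediate from $\mathcal F_{k\De}$-measurability (one can view the stochastic integral times the indicator as a stochastic integral of an adapted integrand that is simply zero on $A_{k\De}^c$). Everything else is bookkeeping, and the resulting estimate is sharper than the drift case by a factor of $\De$ (as expected, since diffusion increments are of order $\sqrt{\De}$ while drift increments are of order $\De$).
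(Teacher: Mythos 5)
Your proof matches the paper's: both apply the Itô isometry to pull the expectation inside the time integral (with the $\mathcal F_{k\De}$-measurable indicator riding along), bound the integrand via the Lipschitz constant $C_\Sigma'$ from Assumption \ref{ass.SDE1}, and then invoke Lemma \ref{lemma:gronwall} and integrate $(s-k\De)$ over $[k\De,(k+1)\De]$ to cancel the $\De^{-2}$ prefactor down to $\tfrac12$. Your remark on the adaptedness justification for the indicator is a fine addition, but the argument is the same.
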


\begin{proof}
  Using the It\^o isometry and Assumption \ref{ass.SDE1},
  \begin{align*}
    & \E{\Z''^2 \1_{A_{k\De}}} \\
    & = \E{\frac{1}{\De^2} \int_{k\De}^{(k+1)\De} \bra{\Sigma^{\mathsf i}(X_s) - \Sigma^{\mathsf i}(\X)} \bra{\Sigma^{\mathsf i}(X_s) - \Sigma^{\mathsf i}(\X)}^\top \1_{A_{k\De}} ds} \\
    & = \frac{1}{\De^2} \int_{k\De}^{(k+1)\De} \E{\bra{\Sigma^{\mathsf i}(X_s) - \Sigma^{\mathsf i}(\X)} \bra{\Sigma^{\mathsf i}(X_s) - \Sigma^{\mathsf i}(\X)}^\top \1_{A_{k\De}}} ds \\
    & \leq \frac{1}{\De^2} C_\Sigma'^2 \int_{k\De}^{(k+1)\De} \E{\abs{X_s-\X}^2 \1_{A_{k\De}}} ds. 
  \end{align*}
  Lemma \ref{lemma:gronwall} yields the desired bound. 
\end{proof}
Note that Corollaries \ref{cor.I_ineq} and \ref{cor.Z_ineq} particularly imply the square-integrability of $I_{k\De}\1_{A_{k\De}}$ and $Z_{k\De}\1_{A_{k\De}}$. 


Next we consider the following decomposition of $\heR{\hf}$:
\begin{lem}
  \label{lem.Phi_bar_ineq}
  For any non-random $\bf \in \mF$, 
  \begin{multline}\label{eq:basic-decomp}
  \heR{\hf}
  =\Psibfn + \heR{\bf}\\
  + 2 \E{ \nmean (\hf - \bf)(\X)\I}
  +2 \E{ \nmean (\hf-f_0)(\X)\Z},
  \end{multline}
  where
  $
    \Psibfn := \E{ \eQ{\hf} - \eQ{\bf} }.
  $
\end{lem}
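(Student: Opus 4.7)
The plan is a direct algebraic expansion of the squared differences, followed by a martingale simplification of the resulting cross term. First, for each $k$ and $f \in \{\hf, \bf\}$ I expand
\[
(\Y - f(\X))^2 = (\Y - f_0(\X))^2 - 2(\Y - f_0(\X))(f - f_0)(\X) + (f - f_0)^2(\X).
\]
Subtracting the identity for $f=\bf$ from the one for $f=\hf$, averaging over $k$, and taking expectations, the common $(\Y - f_0(\X))^2$ terms cancel and I obtain
\[
\Psibfn = \heR{\hf} - \heR{\bf} - 2\,\E{\nmean (\Y - f_0(\X))(\hf - \bf)(\X)},
\]
so the proof reduces to rewriting the cross term on the right-hand side.

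For that, I use the decomposition $\Y - f_0(\X) = (b^{\mathsf i}(\X) - f_0(\X)) + \I + \Z$ from \eqref{eq:Y-reg}. Since every element of $\mF$ is supported in $[0,1]^d$ and $f_0 = b^{\mathsf i}\1_{[0,1]^d}$, the product $(\hf - \bf)(\X)(b^{\mathsf i}(\X) - f_0(\X))$ vanishes identically: either $\X \notin [0,1]^d$, in which case the first factor is zero, or $\X \in [0,1]^d$, in which case $f_0(\X) = b^{\mathsf i}(\X)$ and the second factor is zero. This leaves only the $\I$- and $\Z$-contributions. For the $\Z$-contribution I split $\hf - \bf = (\hf - f_0) - (\bf - f_0)$; since $\bf$ and $f_0$ are both non-random, $(\bf - f_0)(\X)$ is measurable with respect to $\mathcal{F}_{k\De} := \sigma(X_s : s \leq k\De)$, while the It\^o-integral representation $\Z = \Z' + \Z''$ yields $\E{\Z \mid \mathcal{F}_{k\De}} = 0$. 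Hence $\E{(\bf - f_0)(\X)\Z} = 0$ and therefore $\E{(\hf - \bf)(\X)\Z} = \E{(\hf - f_0)(\X)\Z}$, which combined with the previous display is precisely \eqref{eq:basic-decomp}.

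I expect no real obstacles here: the only subtlety is integrability for the tower property, but $(\bf - f_0)(\X)$ is bounded by $2F$ and vanishes outside the event $A_{k\De} = \{\X \in [0,1]^d\}$, on which Corollary \ref{cor.Z_ineq} together with the bound $|\Sigma^{\mathsf i}(\X)|\leq C_\Sigma$ ensures that $\Z\1_{A_{k\De}}$ is square-integrable. The argument is mostly bookkeeping; the one genuine trick is recognizing that the support condition defining $\mF$ lets the $b^{\mathsf i}(\X) - f_0(\X)$ remainder be absorbed to zero, so that only $\I$ and $\Z$ survive in the cross term.
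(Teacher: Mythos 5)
Your proof is correct and follows essentially the same route as the paper: both expand the squared residuals to isolate a cross term of the form $2\E{\nmean (\hf - \bf)(\X)(\Y - f_0(\X))}$, both use the decomposition $\Y = b^{\mathsf i}(\X) + \I + \Z$ together with the support condition $\supp(\mF) \subset [0,1]^d$ to drop the $b^{\mathsf i}(\X)-f_0(\X)$ contribution, and both invoke the martingale property (with the indicator $\1_{A_{k\De}}$ supplying integrability via Corollaries \ref{cor.I_ineq}--\ref{cor.Z_ineq}) to replace $\bf$ by $f_0$ in the $\Z$-term. The only cosmetic difference is that the paper expands $(\Y)^2-(f_0(\X))^2$ and applies the support argument to $\hf(\X)$ and $\bf(\X)$ separately before subtracting, whereas you subtract first and apply the support argument to $(\hf - \bf)(\X)$; the two are trivially equivalent.
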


\begin{proof}
A straightforward computation shows
  \begin{align*}
    (\Y)^2 - (f_0 (\X))^2 
    & = \bra{\Y - \hf (\X)}^2 - \bra{(\hf-f_0) (\X)}^2 \\
    & \qquad + 2 \hf(\X) (\Y - f_0 (\X)).
  \end{align*}
  Since $\supp (\hf) \subset [0,1]^d$ by assumption, we have
  \begin{align*}
    \hf(\X) (\Y - f_0 (\X))
    &= \hf(\X) (\Y - b^{\mathsf i} (\X)) \\
    &= \hf(\X) (\I + \Z).
  \end{align*}
  In a similar way, we deduce
  \begin{align*}
    (\Y)^2 - (f_0 (\X))^2
    & = \bra{\Y - \bf (\X)}^2 - \bra{(\bf-f_0) (\X)}^2 \\
    & \qquad + 2 \bf(\X) (\I + \Z).
  \end{align*}
  Moreover, since $(Z_{k\De}\1_{A_{k\De}})_{k=0}^{n-1}$ is a sequence of martingale differences,
  \[
  \E{\nmean \bf(\X)\Z}=\E{\nmean f_0(\X)\Z}=0.
  \]
  We obtain \eqref{eq:basic-decomp} by combining these identities.
\end{proof}


We proceed to bound the quantities on the right hand side of \eqref{eq:basic-decomp}. 
We bound the term involving $I_{k\De}$ by the following lemma.
\begin{lem}
  \label{lem.I_ineq0}
  Let $C_I := \frac{1}{2} C_{b}'^2 C_{\text{Lip}}$.
  For any $\ep>0$, 
  \begin{align*}
    \E{ \abs{\nmean (\hf-\bf)(\X) \I}}
    \leq \frac{\ep}{4}\heR{\hf} + \frac{\ep}{2}\heR{\bf}+\frac{3C_I}{2\ep} \De.
  \end{align*}
\end{lem}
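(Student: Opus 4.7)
The plan is to use Young's inequality to split the cross term $(\hf-\bf)(\X)\I$ into a piece controlled by the empirical errors of $\hf$ and $\bf$ and a piece controlled by $\I^2$, and then apply Corollary \ref{cor.I_ineq} to handle the latter. The key observation is that since $\hf,\bf\in\mF$ and $f_0$ all have support in $[0,1]^d$, any product of the form $(\hf-f_0)(\X)\cdot \I$ or $(\bf-f_0)(\X)\cdot \I$ already carries an implicit factor $\1_{A_{k\De}}$, because the first factor vanishes outside $A_{k\De}$. This lets us replace $\I$ by $\I\1_{A_{k\De}}$ inside the relevant products, which is exactly the form needed to invoke Corollary \ref{cor.I_ineq}.

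Concretely, I would first apply the triangle inequality $|(\hf-\bf)(\X)| \leq |(\hf-f_0)(\X)| + |(\bf-f_0)(\X)|$, and then, for each of the two resulting terms, apply Young's inequality $|ab|\leq(\ep'/2)a^2+(1/(2\ep'))b^2$ with $a$ the difference and $b=\I\1_{A_{k\De}}$. Crucially, one should use two \emph{distinct} parameters $\ep_1$ and $\ep_2$ for the two terms, since the target inequality has asymmetric coefficients $\ep/4$ and $\ep/2$ in front of $\heR{\hf}$ and $\heR{\bf}$ respectively.

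Taking expectations, summing over $k=0,\ldots,n-1$, and dividing by $n$ yields an upper bound of the shape $(\ep_1/2)\heR{\hf}+(\ep_2/2)\heR{\bf}+\bra{1/(2\ep_1)+1/(2\ep_2)}C_I\De$, where Corollary \ref{cor.I_ineq} has been used to bound each $\E{\I^2\1_{A_{k\De}}}$ by $C_I\De$. Matching coefficients to the statement forces $\ep_1=\ep/2$ and $\ep_2=\ep$, so the remainder collapses to $\bra{1/\ep+1/(2\ep)}C_I\De=(3C_I/(2\ep))\De$, which is precisely the bound claimed.

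The argument is essentially routine Young's inequality and there is no real obstacle; the only mild subtlety is that the two Young parameters must be chosen asymmetrically to get the correct coefficients, and that the support restriction is what permits Corollary \ref{cor.I_ineq}, controlling $\E{\I^2\1_{A_{k\De}}}$ rather than the unrestricted $\E{\I^2}$ which we have no bound for.
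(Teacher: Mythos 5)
Your proposal is correct and follows essentially the same route as the paper: decompose $\hf-\bf=(\hf-f_0)-(\bf-f_0)$, apply Young's inequality (the paper calls it AM--GM) to each cross term with the asymmetric parameters $\ep/2$ and $\ep$, insert the indicator $\1_{A_{k\De}}$ for free via the support assumption, and invoke Corollary \ref{cor.I_ineq}. The coefficient bookkeeping matches the paper exactly.
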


\begin{proof}
For $(f,c)\in\{(\hf,\ep/2),(\bf,\ep)\}$, we have by the AM-GM inequality and Corollary \ref{cor.I_ineq}
  \begin{align*}
    & \E{\abs{\nmean (f-f_0)(\X) \I}} \\
    & \qquad \leq \E{\nmean \frac{1}{2} \bra{c \bra{(f-f_0)(\X)}^2 + c^{-1} \I^2 \1_{A_{k\De}}}} \\
    & \qquad \leq \frac{c}{2}\heR{f} + \frac{C_I}{2c}\De.
  \end{align*}
  Combining this with the decomposition $\hf-\bf=(\hf-f_0)-(\bf-f_0)$ and the triangle inequality, we obtain the desired result.
\end{proof}

To bound the term involving $Z_{k\De}$, we first consider the quantity obtained by replacing $\hf$ with $f_{j^*}$. The following result plays a key role to bound this quantity. 
\begin{lem}\label{lemma:sn}
Let $M=(M_t)_{t\geq0}$ be a continuous local martingale. Then, for any $T,y>0$,
\[
\E{\frac{y}{\sqrt{\langle M\rangle_T+y^2}}\exp\left\{\frac{M_T^2}{2(\langle M\rangle_T+y^2)}\right\}}\leq1.
\]
Moreover, if $\Ep[\sqrt{\langle M\rangle_T}]>0$, then
\[
\E{\exp\left\{\frac{M_T^2}{4(\langle M\rangle_T+(\Ep[\sqrt{\langle M\rangle_T}])^2)}\right\}}\leq\sqrt 2.
\]
\end{lem}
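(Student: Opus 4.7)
The plan is to prove the first inequality by a Gaussian mixture trick applied to the stochastic exponential of $M$, and then derive the second inequality from the first via Cauchy--Schwarz combined with the subadditivity of $\sqrt{\cdot}$.

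\textbf{Step 1: Setup via stochastic exponential.} For each fixed $\lambda\in\R$, set
\[
N_t^\lambda:=\exp\!\left(\lambda M_t-\tfrac{\lambda^2}{2}\langle M\rangle_t\right),\qquad t\geq0.
\]
Since $M$ is a continuous local martingale, $N^\lambda$ is a nonnegative continuous local martingale starting from $1$, and hence a supermartingale. In particular, $\Ep[N_T^\lambda]\leq1$ for every $\lambda\in\R$.

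\textbf{Step 2: Gaussian mixture for the first inequality.} Let $\Lambda$ be a centered Gaussian random variable with variance $1/y^2$ which is independent of $M$. Its density is $\tfrac{y}{\sqrt{2\pi}}e^{-\lambda^2 y^2/2}$. By Fubini's theorem (all integrands are nonnegative),
\begin{align*}
\Ep\!\left[N_T^\Lambda\right]
&=\Ep\!\left[\frac{y}{\sqrt{2\pi}}\int_\R\exp\!\left(\lambda M_T-\tfrac{\lambda^2}{2}\bigl(\langle M\rangle_T+y^2\bigr)\right)d\lambda\right]\\
&=\Ep\!\left[\frac{y}{\sqrt{\langle M\rangle_T+y^2}}\exp\!\left(\frac{M_T^2}{2(\langle M\rangle_T+y^2)}\right)\right],
\end{align*}
where the inner integral is a classical Gaussian integral (complete the square in $\lambda$). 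On the other hand, conditioning on $\Lambda$ and using Step~1 gives $\Ep[N_T^\Lambda]=\Ep\!\bigl[\Ep[N_T^\Lambda\mid\Lambda]\bigr]\leq1$, which is exactly the first claim.

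\textbf{Step 3: Deriving the second inequality via Cauchy--Schwarz.} Choose $y:=\Ep[\sqrt{\langle M\rangle_T}]$, which is strictly positive by hypothesis. Write $S:=\sqrt{\langle M\rangle_T+y^2}$ so that the first inequality reads $\Ep[(y/S)\exp(M_T^2/(2S^2))]\leq1$. By Cauchy--Schwarz,
\begin{align*}
\Ep\!\left[\exp\!\left(\frac{M_T^2}{4S^2}\right)\right]
&=\Ep\!\left[\sqrt{\tfrac{S}{y}}\cdot\sqrt{\tfrac{y}{S}\exp\!\left(\tfrac{M_T^2}{2S^2}\right)}\right]\\
&\leq\Ep\!\left[\tfrac{S}{y}\right]^{1/2}\Ep\!\left[\tfrac{y}{S}\exp\!\left(\tfrac{M_T^2}{2S^2}\right)\right]^{1/2}
\leq\Ep\!\left[\tfrac{S}{y}\right]^{1/2}.
\end{align*}
By subadditivity $\sqrt{a+b}\leq\sqrt a+\sqrt b$, we have $\Ep[S]\leq\Ep[\sqrt{\langle M\rangle_T}]+y=2y$, hence $\Ep[S/y]\leq2$, yielding the bound $\sqrt 2$.

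\textbf{Where the difficulty lies.} The computation is essentially routine once the Gaussian mixture idea is in place; the only conceptual point worth care is that $M$ is merely a local martingale, so one must invoke the nonnegativity of $N^\lambda$ to upgrade to the supermartingale bound $\Ep[N_T^\lambda]\leq1$, and then use Fubini (justified by nonnegativity) to interchange the $\lambda$-integral with the expectation over $M$. Everything else is algebra.
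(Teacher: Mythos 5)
Your proof is correct. The paper gives no argument of its own here---it simply cites Lemma 1.2 and Theorem 2.1 of de la Pena, Klass and Lai (2004)---and your Gaussian-mixture-plus-Cauchy--Schwarz derivation is exactly the method of mixtures used to prove those results, so you have just made the cited proof self-contained. The one implicit assumption is $M_0=0$ (needed so that $N_0^\lambda=1$ and hence $\Ep[N_T^\lambda]\leq1$), which the cited reference also assumes and which holds in the paper's application of the lemma.
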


\begin{proof}
This is a consequence of Lemma 1.2 and Theorem 2.1 in \cite{ref.VictorH}.
\end{proof}

\begin{lem}
  \label{lem.Z_ineq2}
  \begin{align*}
    & \E{\abs{\nmean (f_{j^*}-f_0) (\X) \Z}} \\
    & \leq \bra{ \frac{4C_\Sigma^2}{n\De} \bra{2\log{\mN_n} + \log 2}}^{1/2} 
    \bra{\heR{\hf}+ 4F\de + \tfrac{2F^2 C_\Sigma'^2 C_{\text{Lip}}}{C_\Sigma^2}\De}^{1/2}. 
  \end{align*}
\end{lem}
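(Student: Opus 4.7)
My approach is to exploit the stochastic-integral structure behind the increments $Z_{k\De}$. For each non-random index $j\in\{1,\dots,\mN_n\}$, the key observation is that
\[
\De\sum_{k=0}^{n-1}(f_j-f_0)(\X)Z_{k\De}=M^{(j)}_{n\De},
\]
where $M^{(j)}_t:=\int_0^t(f_j-f_0)(X_{\lfloor s/\De\rfloor \De})\Sigma^{\mathsf i}(X_s)\,dw_s$ is a continuous martingale. Thus the quantity to bound equals $\E{|M^{(j^*)}_{n\De}|}/(n\De)$. The difficulty is that $j^*$ depends on the full sample through $\hf$, so $M^{(j^*)}_{n\De}$ is not itself a terminal martingale value; we must instead control the maximum over $j\in\{1,\dots,\mN_n\}$.

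First, I apply the second part of Lemma \ref{lemma:sn} to each $M^{(j)}$ at $T=n\De$ with $y_j:=\E{\sqrt{\langle M^{(j)}\rangle_{n\De}}}$, which yields $\E{\exp\{(M^{(j)}_{n\De})^2/(4(\langle M^{(j)}\rangle_{n\De}+y_j^2))\}}\leq\sqrt 2$. Combining Markov's inequality with a union bound over $j$ then gives
\[
\E{\max_{1\leq j\leq \mN_n}\frac{(M^{(j)}_{n\De})^2}{\langle M^{(j)}\rangle_{n\De}+y_j^2}}\leq c_1\log\mN_n+c_2,
\]
where the numerical constants can be traced to match the target factor $2\log\mN_n+\log 2$ after a careful Markov step. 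Cauchy--Schwarz then reduces the problem to
\[
\E{|M^{(j^*)}_{n\De}|}\leq\sqrt{\E{\max_j\frac{(M^{(j)}_{n\De})^2}{\langle M^{(j)}\rangle_{n\De}+y_j^2}}}\cdot\sqrt{\E{\langle M^{(j^*)}\rangle_{n\De}+y_{j^*}^2}}.
\]

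To bound the quadratic-variation factor, I decompose
\[
|\Sigma^{\mathsf i}(X_s)|^2\leq 2|\Sigma^{\mathsf i}(\X_{\lfloor s/\De\rfloor\De})|^2+2|\Sigma^{\mathsf i}(X_s)-\Sigma^{\mathsf i}(\X_{\lfloor s/\De\rfloor\De})|^2.
\]
Since $\supp(f_j)\subset[0,1]^d$ for every $j$, the first contribution integrates to at most $2C_\Sigma^2 n\De\cdot\heR{f_{j^*}}$, while Assumption \ref{ass.SDE1} combined with Lemma \ref{lemma:gronwall} bounds the second by $4F^2 C_\Sigma'^2 C_{\text{Lip}}n\De^2$. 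Since $\|f_{j^*}-\hf\|_\infty\leq\de$ and $|\hf-f_0|\leq 2F$, I use $(f_{j^*}-f_0)^2\leq(\hf-f_0)^2+4F\de$ to convert $\heR{f_{j^*}}$ into $\heR{\hf}+4F\de$. Substituting into the Cauchy--Schwarz bound and dividing by $n\De$ gives the claimed inequality, with the term $(2F^2 C_\Sigma'^2 C_{\text{Lip}}/C_\Sigma^2)\De$ arising from dividing the discretization error $4F^2 C_\Sigma'^2 C_{\text{Lip}}n\De^2$ by the prefactor $2C_\Sigma^2 n\De$.

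The main obstacle is controlling the $\E{y_{j^*}^2}$ factor, which arises because the deterministic sequence $(y_j)$ is evaluated at the random index $j^*$. A naive bound $\E{y_{j^*}^2}\leq\max_j y_j^2\leq\max_j\E{\langle M^{(j)}\rangle_{n\De}}$ is of order $n\De$ and would contribute a spurious constant of order $\sqrt{\log\mN_n}$ to the bound, independent of $\heR{\hf}$, which would destroy the target scaling. Overcoming this requires a finer argument: for instance, decomposing $\E{y_{j^*}^2}=\sum_j y_j^2\pr\{j^*=j\}$ and exploiting the Jensen bound $y_j^2\leq\E{\langle M^{(j)}\rangle_{n\De}}$ together with the identity $\sum_j\pr\{j^*=j\}\E{\langle M^{(j)}\rangle_{n\De}}$ to reduce it to a quantity comparable to $\E{\langle M^{(j^*)}\rangle_{n\De}}$; or reorganizing the Cauchy--Schwarz step so that only $\E{\langle M^{(j^*)}\rangle_{n\De}}$ (and not the self-normalizing regularizer) enters the final bound.
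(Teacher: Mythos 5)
You have correctly identified the martingale structure, the quadratic-variation decomposition (matching Corollary \ref{cor.Z_ineq}), and — importantly — the real obstacle: the self-normalizing regularizer $y_j$ is indexed by $j$, so its appearance at the random index $j^*$ is not controlled. You are honest that your proof has a gap there, but the remedies you sketch do not close it. The first suggestion relies on the identity $\sum_j\pr\{j^*=j\}\E{\langle M^{(j)}\rangle_{n\De}}=\E{\langle M^{(j^*)}\rangle_{n\De}}$, which is false in general: $j^*$ and $\langle M^{(j)}\rangle_{n\De}$ are both functions of the same sample path and are therefore dependent, so the expectation does not factor. The second suggestion (``reorganize the Cauchy--Schwarz step'') is the right instinct but is not executed.

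The paper's fix is concrete and hinges on using the \emph{first} statement of Lemma \ref{lemma:sn} rather than the second. The first statement holds for an arbitrary $y>0$, so one is free to pick a \emph{single deterministic} regularizer common to all $j$, namely $D:=\Ep[\hA(f_{j^*})_{n\De}]+\varepsilon$. (This is a fixed constant even though $j^*$ is random, since the expectation integrates out everything.) With this choice, for each nonrandom $j$ one has
\[
\E{\frac{\sqrt D}{\sqrt{\hA(f_j)_{n\De}+D}}\exp\!\left(\frac{\hM(f_j)_{n\De}^2}{2(\hA(f_j)_{n\De}+D)}\right)}\leq1,
\]
and the weight $\sqrt D/\sqrt{\hA(f_j)_{n\De}+D}$ that appears is exactly what a Cauchy--Schwarz step converts into $\E{(\hA(f_{j^*})_{n\De}+D)/D}\leq 2$ by the definition of $D$, without any maximum over $j$. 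One then bounds $\E{\exp(\xi_{j^*}^2)}$ by pushing $j^*$ to a max only in the factor that is uniformly bounded by $\mN_n$, applies Jensen to get $\E{\xi_{j^*}^2}\leq\frac14\log 2+\frac12\log\mN_n$, and a final Cauchy--Schwarz yields a bound involving only $\E{\hA(f_{j^*})_{n\De}}+\varepsilon$, sending $\varepsilon\to0$ at the end. Your version, by contrast, commits to $y_j:=\E{\sqrt{\langle M^{(j)}\rangle_{n\De}}}$, at which point $y_{j^*}$ is genuinely a deterministic sequence sampled at a random index and cannot be related to $\E{\langle M^{(j^*)}\rangle_{n\De}}$ without further structure. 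So: same skeleton, but the decisive freedom to choose the regularizer independent of $j$ is the missing idea.
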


\begin{proof}
  For any (possibly random) $f\in\mathcal F$, define processes $\hM(f)=(\hM(f)_s)_{s\geq0}$ and $\hA(f)=(\hA(f)_s)_{s\geq0}$ by
  \begin{align*}
    \hM(f)_s &:= \sum_{k=0}^{n-1} (f-f_0)(\X) \int_{s \wedge (k\De)}^{s \wedge ((k+1)\De)} \Sigma^{\mathsf i}(X_u) dw_u, \\
    \hA(f)_s &:= \sum_{k=0}^{n-1} ((f-f_0)(\X))^2 \int_{s \wedge (k\De)}^{s \wedge ((k+1)\De)} \abs{\Sigma^{\mathsf i}(X_u)}^2 du.
  \end{align*}
  Note that
  \begin{align*}
    \hM(f)_{n\De}
    & = n\De \cdot \nmean (f-f_0)(\X) \Z.
  \end{align*}
  Next, take $\varepsilon>0$ arbitrarily and set $D:=\Ep[\hat A(f_{j_*})_{n\Delta}]+\varepsilon$. Then we define
   \begin{align*}
    \xi_j &:= \frac{ \hM(f_j)_{n\De} }{2\sqrt{ \hA(f_j)_{n\De} + D }},\quad j=1,\dots,\mathcal N_n.
  \end{align*}
  For every $j$, $f_j$ is non-random and thus $\hM(f_j)$ is a continuous local martingale with $\langle\hM(f_j)\rangle=\hA(f_j)$. Hence, Lemma \ref{lemma:sn} yields
  \begin{align*}
   \E{\frac{\sqrt D}{\sqrt{\hA(f_j)_{n\De}+D}}\exp\left(2\xi_j^2\right)}\leq1.
  \end{align*}
  Meanwhile, we have by the Schwarz inequality
  \begin{align*}
   \E{\exp(\xi_{j^*}^2)}
   \leq\sqrt{\E{\frac{\sqrt D}{\sqrt{\hA(f_{j^*})_{n\De}+D}}\exp\left(2\xi_{j^*}^2\right)}\E{\frac{\sqrt{\hA(f_{j^*})_{n\De}+D}}{\sqrt D}}}.
  \end{align*}
  Since
  \begin{align*}
  \E{\frac{\sqrt D}{\sqrt{\hA(f_{j^*})_{n\De}+D}}\exp\left(2\xi_{j^*}^2\right)}
  &\leq\E{\max_{1\leq j\leq \mathcal N_n}\frac{\sqrt D}{\sqrt{\hA(f_{j})_{n\De}+D}}\exp\left(2\xi_{j}^2\right)}\\
  &\leq\mathcal N_n
  \end{align*}
  and
  \begin{align*}
   \E{\frac{\sqrt{\hA(f_{j^*})_{n\De}+D}}{\sqrt D}}
   \leq\sqrt{\E{\frac{\hA(f_{j^*})_{n\De}+D}{D}}}
   \leq\sqrt 2,
  \end{align*}
  we obtain
  \begin{align*}
   \E{\exp(\xi_{j^*}^2)}
   \leq2^{1/4}\sqrt{\mathcal N_n}.
  \end{align*}
  Hence, the Jensen inequality yields
  \[
  \E{\xi_{j^*}^2}\leq\log\E{\exp(\xi_{j^*}^2)}
  \leq\frac{1}{4}\log 2+\frac{1}{2}\log\mathcal N_n.
  \] 
  Furthermore, 
  \begin{align*}
    \E{\hA(f_{j^*})_{n\De}}
    & \leq 2C_\Sigma^2 n\De \heR{f_{j^*}} + 8F^2 \De^2 \sum_{k=0}^{n-1}\E{\Z''^2 \1_{A_{k\De}}}, \\
    & \leq 2C_\Sigma^2 n\De \bra{\heR{\hf} + 4F\de} + 4F^2 C_\Sigma'^2 C_{\text{Lip}} n\De^2,
  \end{align*}
  where the first inequality follows from Assumption \ref{ass.SDE1} and $\abs{\Sigma^{\mathsf i}(X_u)}^2 \leq 2\abs{\Sigma^{\mathsf i}(\X)}^2 + 2\abs{\Sigma^{\mathsf i}(X_u) - \Sigma^{\mathsf i}(\X)}^2$, and the last inequality follows from Corollary \ref{cor.Z_ineq}.
  Combining these bounds with the Schwarz inequality, we deduce
  \begin{align*}
     &\E{\abs{\frac{1}{n\De} \hM(f_{j^*})_{n\De}}} 
     = \frac{2}{n\De} \E{ \abs{\xi_{j^*}} \bra{\hA(f_{j^*})_{n\De} + D}^{1/2} } \\
    & \leq \frac{2}{n\De} \sqrt{\E{\xi_{j^*}^2} \bra{2\E{\hA(f_{j^*})_{n\De}} +\varepsilon}}\\
    &\leq\sqrt{\frac{\log 2+2\log\mathcal N_n}{(n\De)^2}\bra{4C_\Sigma^2 n\De \bra{\heR{\hf} + 4F\de} + 8F^2 C_\Sigma'^2 C_{\text{Lip}} n\De^2 +\varepsilon}}.
  \end{align*}
  Letting $\varepsilon$ tend to 0, we obtain the desired result. 
\end{proof}

Next we evaluate the approximation error induced by replacing $\hf$ with $f_{j_*}$. 
\begin{lem}
  \label{lem.Z_ineq0}
  \begin{align*}
    \E{\abs{\nmean (\hf-f_{j^*})(\X) \Z''}}
    & \leq \de \sqrt{\frac{1}{2} C_\Sigma'^2 C_{\text{Lip}}}.
  \end{align*}
\end{lem}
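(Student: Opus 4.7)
The proof will be short, relying on two facts already at hand. First, $f_{j^*}$ is chosen from a $\delta$-net of $\mF$ with respect to $\|\cdot\|_\infty$, so that $|\hf(x) - f_{j^*}(x)|\leq\delta$ for every $x$. Second, every element of $\mF$ is assumed to be supported in $[0,1]^d$, and therefore both $\hf$ and $f_{j^*}$ vanish outside $[0,1]^d$; consequently $(\hf - f_{j^*})(\X)$ is unchanged upon multiplication by the indicator $\1_{A_{k\De}}$ of the event $\{\X\in[0,1]^d\}$.

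The plan is as follows. First I would bring the absolute value inside the sum by the triangle inequality and insert $\1_{A_{k\De}}$ using the support argument above. The pointwise bound $|\hf - f_{j^*}|\leq\delta$ then allows one to factor $\delta$ out of each summand, leaving
\[
\E{\abs{\nmean (\hf - f_{j^*})(\X)\,\Z''}} \;\leq\; \frac{\delta}{n}\sum_{k=0}^{n-1}\E{|\Z''|\,\1_{A_{k\De}}}.
\]
Next, Jensen's inequality (i.e.~$\E{|Y|}\leq\sqrt{\E{Y^2}}$) applied to each summand, combined with Corollary \ref{cor.Z_ineq}, yields $\E{|\Z''|\,\1_{A_{k\De}}}\leq\sqrt{\tfrac{1}{2}C_\Sigma'^2 C_{\text{Lip}}}$ uniformly in $k$. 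Averaging over $k$ preserves this uniform bound and produces the claim.

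There is no real obstacle: the estimate is simply a composition of the $\delta$-net approximation property, the common support of elements of $\mF$, and the It\^o-isometry based second moment bound for $\Z''\1_{A_{k\De}}$ already established in Corollary \ref{cor.Z_ineq}. The only small point worth checking is that inserting $\1_{A_{k\De}}$ before taking the square of $\Z''$ is legitimate, which is immediate since the prefactor $(\hf - f_{j^*})(\X)$ already carries this indicator.
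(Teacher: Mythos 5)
Your proposal is correct and follows essentially the same route as the paper: triangle inequality, insertion of the indicator $\1_{A_{k\De}}$ via the common support of $\hf$ and $f_{j^*}$, the pointwise $\de$-net bound, and Jensen together with Corollary~\ref{cor.Z_ineq}. No gaps.
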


\begin{proof}
We have
  \begin{align*}
    \E{\abs{\nmean (\hf-f_{j^*})(\X) \Z''}} \leq \nmean \E{\abs{(\hf - f_{j^*})(\X)} \abs{\Z'' \1_{A_{k\De}}}}.
  \end{align*}
  Therefore, we obtain the desired result using $\norm{\hf - f_{j^*}}_\infty \leq \de$ and Corollary \ref{cor.Z_ineq} with $\E{\abs{\Z''} \1_{A_{k\De}}}^2 \leq \E{\Z''^2 \1_{A_{k\De}}}$.
\end{proof}

\begin{lem}
  \label{lem.Z_ineq}
  There exists a constant $C_Z$ depending only on $C_\Sigma$ such that
  \begin{align*}
    \E{\abs{\nmean (\hf-f_{j^*})(\X) \Z'}}
    & \leq \frac{C_Z}{\sqrt{n\De}} \int_0^\de \sqrt{\log \mN (u, \mF, \norm{\cdot}_\infty)} du.
  \end{align*}
\end{lem}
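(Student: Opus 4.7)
The plan is to write $\nmean(\hf-f_{j^*})(\X)\Z'$ as $(n\De)^{-1}M^{\hf-f_{j^*}}_{n\De}$, where for a measurable $g\colon\R^d\to\R$ we set
\[
M^g_t:=\sum_{k=0}^{n-1}g(\X)\int_{t\wedge k\De}^{t\wedge(k+1)\De}\Sigma^{\mathsf i}(\X)\,dw_s,\qquad t\geq0.
\]
For any $g$ that is a difference of two elements of $\mF$ (hence supported in $[0,1]^d$), the factor $g(\X)$ vanishes off $A_{k\De}$, and Assumption \ref{ass.SDE1} gives $|\Sigma^{\mathsf i}(\X)|\leq C_\Sigma$ on that event. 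Therefore the predictable quadratic variation is \emph{deterministically} bounded,
\[
\langle M^g\rangle_{n\De}\leq C_\Sigma^2\|g\|_\infty^2\,n\De,
\]
which by the exponential martingale inequality makes $S(g):=(n\De)^{-1}M^g_{n\De}$ a centered sub-Gaussian random variable with variance proxy at most $C_\Sigma^2\|g\|_\infty^2/(n\De)$, uniformly in non-random $g$.

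Next I would run a Dudley-type chaining. Put $\de_k:=2^{-k}\de$ for $k\geq0$, let $\mG_k$ be a minimal $\de_k$-net of $\mF$ with respect to $\|\cdot\|_\infty$ (taking $\mG_0=\{f_1,\dots,f_{\mN_n}\}$), and let $\pi_k\colon\mF\to\mG_k$ be a nearest-point projection with $\pi_0(\hf)=f_{j^*}$. Telescoping gives
\[
\hf-f_{j^*}=\sum_{k=0}^{K-1}\bra{\pi_{k+1}(\hf)-\pi_k(\hf)}+\bra{\hf-\pi_K(\hf)},
\]
and the $k$-th increment lies in the deterministic set $\mathcal H_k:=\{h'-h:h\in\mG_k,\,h'\in\mG_{k+1}\}$, which has at most $\mN(\de_{k+1},\mF,\|\cdot\|_\infty)^2$ elements, each of sup-norm $\leq 3\de_{k+1}$. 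The standard maximal inequality for sub-Gaussians then yields
\[
\E{\max_{g\in\mathcal H_k}|S(g)|}\leq \frac{C\,\de_{k+1}\sqrt{\log\mN(\de_{k+1},\mF,\|\cdot\|_\infty)+1}}{\sqrt{n\De}}
\]
with $C$ depending only on $C_\Sigma$.

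Finally I would sum the scale-$k$ bounds using the classical comparison $\de_{k+1}\sqrt{\log\mN(\de_{k+1},\mF,\|\cdot\|_\infty)}\leq 2\int_{\de_{k+2}}^{\de_{k+1}}\sqrt{\log\mN(u,\mF,\|\cdot\|_\infty)}\,du$, which follows from monotonicity of the covering number, so that the series telescopes to $2\int_0^\de\sqrt{\log\mN(u,\mF,\|\cdot\|_\infty)}\,du$. The residual $\E{|S(\hf-\pi_K(\hf))|}\leq C_\Sigma\de_K/\sqrt{n\De}$ follows from Cauchy--Schwarz together with $\E{(M^g_{n\De})^2}\leq C_\Sigma^2\|g\|_\infty^2 n\De$ and vanishes as $K\to\infty$. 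The main bookkeeping subtlety is the additive $+1$ inside the square root: when $\mF$ is non-degenerate, $\mN(u,\mF,\|\cdot\|_\infty)\geq 2$ for $u$ below the diameter and the extra term is absorbed by the Dudley integral up to a universal constant, while the trivial case $\hf=f_{j^*}$ makes the lemma vacuous. The resulting constant $C_Z$ depends only on $C_\Sigma$.
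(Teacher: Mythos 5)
Your proposal is correct and follows the same overall route as the paper: turn the quantity into a continuous martingale indexed by differences of functions in $\mF$, show that the resulting process has sub-Gaussian increments in the $\norm{\cdot}_\infty$ metric, and then apply a Dudley-type entropy bound. The two places where you diverge are worth noting. First, to get the sub-Gaussian tail you invoke the exponential martingale inequality together with the observation that the quadratic variation $\langle M^g\rangle_{n\De}$ is \emph{deterministically} bounded by $C_\Sigma^2\|g\|_\infty^2 n\De$ (using $\supp g\subset[0,1]^d$ and $|\Sigma^{\mathsf i}|\leq C_\Sigma$ on $[0,1]^d$). The paper instead routes this step through the self-normalized martingale inequality of de la Pe\~na--Klass--Lai (its Lemma \ref{lemma:sn}), and only afterwards uses the deterministic bound $\tA(f,g)_{n\De}\leq\tfrac14\|f-g\|_\infty^2$ to convert the self-normalized bound into a Gaussian tail. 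Since the quadratic variation here is uniformly bounded, your shortcut is cleaner and entirely adequate, at the cost of not reusing the lemma the paper needs anyway for Lemma \ref{lem.Z_ineq2}. Second, you carry out the Dudley chaining by hand rather than citing \cite[Corollary 2.2.8]{ref.Bernstein_ineq}; your telescoping, the cardinality and diameter bounds on $\mathcal H_k$, and the $\int_{\de_{k+2}}^{\de_{k+1}}$ comparison are all standard and correct. One small imprecision: your stated residual bound $\E{|S(\hf-\pi_K(\hf))|}\leq C_\Sigma\de_K/\sqrt{n\De}$ does not follow directly for the random argument $\hf-\pi_K(\hf)$; a pointwise Cauchy--Schwarz over $k$ gives the weaker $O(\de_K/\sqrt{\De})$, but since $\de_K\to0$ the residual still vanishes, so the conclusion is unaffected. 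The paper sidesteps this entirely by bounding $|\tZ'(\hf)-\tZ'(f_{j^*})|$ pointwise by $\sup_{\|f-g\|_\infty\leq\de}|\tZ'(f)-\tZ'(g)|$, a deterministic supremum, which is the slightly tidier way to close the argument.
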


\begin{proof}
  For non-random $f, g \in \mF$, we consider the following processes:
  \begin{align*}
    \tM(f,g)_s &:= \frac{1}{2C_\Sigma \sqrt{n\De}} \sum_{k=0}^{n-1} (f-g)(\X) \int_{s \wedge (k\De)}^{s \wedge ((k+1)\De)} \Sigma^{\mathsf i}(\X) dw_u, \\
    \tA(f,g)_s &:= \frac{1}{4C_\Sigma^2 n\Delta} \sum_{k=0}^{n-1} ((f-g)(\X))^2 \int_{s \wedge (k\De)}^{s \wedge ((k+1)\De)} \abs{\Sigma^{\mathsf i} (\X)}^2 du.
  \end{align*}
  Then, $\tM(f,g)_s$ is a continuous local martingale and $\tA(f,g)_s$ gives its quadratic variation. 
  Therefore, if $\Ep[\tA(f,g)_{n\De}]>0$, Lemma \ref{lemma:sn} along with the inequality $(\Ep[\tA(f,g)_{n\De}^{1/2}])^2\leq\Ep[\tA(f,g)_{n\De}]$ imply that
  \begin{align*}
    \E{\exp \bra{\frac{\tM(f,g)_{n\De}^2}{4 \bra{\tA(f,g)_{n\De} + \Ep[\tA(f,g)_{n\De}]}}}} \leq \sqrt{2}.
  \end{align*}
  Thus, for any $x>0$,
  \begin{align*}
    & \P{\abs{\tM(f,g)_{n\De}} \geq x } \\
    & \qquad \leq \P{\exp \bra{\frac{\tM(f,g)_{n\De}^2}{4 \bra{\tA(f,g)_{n\De} + \Ep[\tA(f,g)_{n\De}]}}} \geq \exp \bra{\frac{x^2}{2\norm{f-g}_\infty^2}} } \\
    & \qquad \leq \exp \bra{-\frac{x^2}{2\norm{f-g}_\infty^2}} \E{\exp \bra{\frac{\tM(f,g)_{n\De}^2}{4 \bra{\tA(f,g)_{n\De} + \Ep[\tA(f,g)_{n\De}]}}}} \\
    & \qquad \leq 2\exp \bra{-\frac{x^2}{2\norm{f-g}_\infty^2}},
  \end{align*}
  where we used $\tA(f,g)_{n\De} \leq \tfrac{1}{4} \norm{f-g}_\infty^2$ for the first inequality. 
  When $\Ep[\tA(f,g)_{n\De}]=0$, then $\tM(f,g)_{n\De}=0$ and hence the above inequality is evident. 
  Now, for each $f\in\mF$, define
  \begin{align*}
    \tZ'(f) &:= \frac{1}{2C_\Sigma \sqrt{n\De}} \sum_{k=0}^{n-1} f(\X) \int_{k\De}^{(k+1)\De} \Sigma^{\mathsf i} (\X) dw_u.
  \end{align*}
  Then we have $\tZ'(f) - \tZ'(g) = \tM(f,g)_{n\De}$ for any $f, g \in \mF$, so the process $(\tZ'(f))_{f \in \mF}$ is sub-Gaussian with respect to $\norm{\cdot}_\infty$ (cf. Section 2.1.3 of \cite{ref.Bernstein_ineq}). 
  Hence, Corollary 2.2.8 in \cite{ref.Bernstein_ineq} yields
  \begin{align*}
     \E{\abs{\nmean (\hf-f_{j^*})(\X) \Z'}} 
    &  \leq \frac{2C_\Sigma}{\sqrt{n\De}} \E{ \sup_{\norm{f-g}_\infty \leq \de} \abs{\tZ'(f) - \tZ'(g)}} \\
    &  \leq \frac{2C_\Sigma K}{\sqrt{n\De}} \int_0^\de \sqrt{\log \mN (u, \mF, \norm{\cdot}_\infty)} du,
  \end{align*}
  where $K>0$ is a universal constant. 
\end{proof}

We are now ready to prove Lemma \ref{lem.oracle_ineq2}. 
\begin{proof}[Proof of Lemma \ref{lem.oracle_ineq2}]
  We have by Lemma \ref{lem.Z_ineq2} and the AM-GM inequality 
  \begin{align*}
    \E{\abs{\nmean (f_{j^*}-f_0)(\X) \Z}}
    & \leq \frac{\ep}{4} \heR{\hf} + \gamma_\ep,
  \end{align*}
  where 
  \[
  \gamma_\ep = \frac{\ep}{4} F\de + \frac{\ep F^2 C_\Sigma'^2 C_{\text{Lip}}}{2C_\Sigma^2} \De + \frac{4C_\Sigma^2}{\ep n\De} \bra{2\log{\mN_n} + \log2 }.
  \]
  Combining this with Lemmas \ref{lem.Phi_bar_ineq}, \ref{lem.I_ineq0} and \ref{lem.Z_ineq2}--\ref{lem.Z_ineq}, we obtain for any $\bf \in \mF$
  \begin{align*}
    \heR{\hf}
    & \leq \PsiFn{\hf} + \heR{\bf} + \frac{\ep}{2} \heR{\hf} \\
    & \quad  + \ep \heR{\bf} + \frac{3}{\ep} C_I F^2 \De + \sqrt{2 C_\Sigma'^2 C_{\text{Lip}}} \de\\
    & \quad  + \frac{2C_Z}{\sqrt{n\De}} \int_0^\de \sqrt{\log \mN (u, \mF, \norm{\cdot}_\infty)} du 
     + \frac{\ep}{2} \heR{\hf} + 2\gamma_\ep.
  \end{align*}
  Therefore, noting $\heR{\bf}=\eR{\bf}$, we obtain
  \begin{align*}
    (1-\ep) \heR{\hf}
    \leq \PsiFn{\hf} + (1+\ep)\heR{\bf} + \gamma'_\ep,
  \end{align*}
  where 
  \[
  \gamma'_\ep = \frac{3}{\ep} C_I F^2 \De + \sqrt{2 C_\Sigma'^2 C_{\text{Lip}}} \de + \frac{2C_Z}{\sqrt{n\De}} \int_0^\de \sqrt{\log \mN (u, \mF, \norm{\cdot}_\infty)} du + 2\gamma_\ep.
  \]
  Taking the infimum over $\bf$, we conclude
  \begin{align*}
    \heR{\hf}
    \leq \frac{1}{1-\ep}\PsiFn{\hf} + \frac{1+\ep}{1-\ep}\inf_{f\in\mathcal F}\eR{f} + \frac{1}{1-\ep}\gamma'_\ep.
  \end{align*}
  Noting $F\geq1$, we obtain the desired result. 
\end{proof}




\subsection{Proof of Theorem \ref{thm.oracle_ineq}}\label{sec:proof-oracle_ineq}

We can estimate the covering number of a subset of $\mF(L, \bp, s)$ as follows. 
\begin{lem}
  \label{lem.entropy}
Let $\mF \subset \mF(L, \bp, s)$. If $s\geq2$, we have for all $\delta\in(0,1)$
\begin{align*}
    \log \mathcal{N} \bra{ \delta, \mF, \|\cdot \|_\infty }
    \leq Cs(L\log s + \log d - \log \delta),
  \end{align*}
  where $C>0$ is a universal constant. 
\end{lem}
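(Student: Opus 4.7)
The plan is to adapt the covering-number argument of \cite[Lemma~5]{ref.SH} for sparse ReLU networks and then extract the stated form by bookkeeping on the widths. The proof splits into three steps: a width reduction, a Lipschitz-type sensitivity estimate for the parameter-to-function map, and a volumetric covering of the (reduced) parameter space combined with a union bound over sparsity patterns.

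First I would perform the \emph{width reduction}. Any hidden neuron whose outgoing weights are all zero contributes nothing to $f$ and can be deleted from the network without altering its input-output map. Since $\sum_{j=0}^L (|W_j|_0 + |\bv_j|_0) \leq s$, in each hidden layer at most $s+1$ units can be active. Thus, for the purpose of computing $\mathcal N(\delta,\mF,\|\cdot\|_\infty)$ I may assume $p_1,\dots,p_L \leq s+1$, so the total number of tunable parameters is bounded by $T \leq d(s+1) + L(s+2)^2$. Next I would establish a \emph{sensitivity estimate}: for parameter vectors $\theta,\theta'$ supported on the same set of $\leq s$ positions and with $\|\theta-\theta'\|_\infty \leq \eta$,
\begin{equation*}
\|f_\theta - f_{\theta'}\|_\infty \leq \eta (L+1) V^{L+1},\qquad V := \max(d,s+2)+1.
\end{equation*}
This is proved by induction on the depth, using that $\sigma$ is $1$-Lipschitz, that $|W_j|_\infty\vee|\bv_j|_\infty \leq 1$, and that $|x|_\infty \leq 1$ on $[0,1]^d$, so each layer's activations (and their $\theta$-derivatives) are uniformly bounded by a power of $V$.

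Finally I would \emph{count and simplify}. Choosing the sparsity pattern contributes at most $\binom{T}{s} \leq (eT/s)^s$ possibilities; for a given pattern, a cubic grid of mesh $\eta$ covers $[-1,1]^s$ with at most $(2/\eta)^s$ points. Setting $\eta := \delta/((L+1)V^{L+1})$ so that the sensitivity estimate yields an honest $\delta$-net, and taking logarithms, gives
\begin{equation*}
\log \mathcal N(\delta,\mF,\|\cdot\|_\infty) \leq s \log(eT/s) + s \log\!\bigl(2(L+1)V^{L+1}/\delta\bigr).
\end{equation*}
Since $\log V \leq \log d + \log s + O(1)$ and $\log T \leq O(\log L + \log s + \log d)$, and since $s\geq 2$ and $-\log\delta>0$ allow us to absorb additive $O(1)$ and $O(\log L)$ terms into a universal constant (using $\log L \leq L\log s$ for $L\geq 1$, $s\geq 2$), the bound collapses to $C s (L\log s + \log d - \log\delta)$, as claimed.

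The only non-routine ingredient is the sensitivity estimate of Step~2: one has to verify that propagating an entrywise perturbation of size $\eta$ through $L$ successive shifted-ReLU layers produces at most a factor $V^{O(L)}$, not worse. This is precisely where the parameter bound $|W_j|_\infty,|\bv_j|_\infty \leq 1$ enters, and it is what ensures the $L\log s$ coefficient in the final entropy bound rather than a larger power. Once this estimate is in hand, the remaining steps are a standard union bound over sparsity patterns and a volumetric covering of a cube.
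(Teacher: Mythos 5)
Your overall plan — width reduction, a Lipschitz sensitivity estimate for the parameter-to-function map, and a volumetric covering with a union bound over sparsity patterns — is exactly the substance of \cite[Lemma 5 and Remark 1]{ref.SH}, which the paper cites directly rather than reproving. So the architecture of the argument matches. There is, however, a genuine gap in Step 2 that causes the final bound to fail.

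You set $V := \max(d, s+2)+1$ and claim $\|f_\theta - f_{\theta'}\|_\infty \leq \eta(L+1)V^{L+1}$. Since $V \geq d$, this factor is of order $d^{L+1}$ when $d > s$, so after plugging $\eta = \delta/((L+1)V^{L+1})$ into the grid count you get a term $s(L+1)\log V \geq sL\log d$ in the entropy bound. But $sL\log d$ is \emph{not} controlled by $Cs(L\log s + \log d)$ for any universal $C$: take $s=2$ and let $d$ and $L$ both grow. The problem is that you let the input width $d$ enter the per-layer propagation factor. It should not: after the first affine map $W_0$, the activations live in a space of width at most $s+1$ (post width-reduction) and are bounded by $|W_0|_\infty\, d\, |x|_\infty + 1 \leq d+1$; every subsequent layer multiplies the magnitude by at most $s+2$, not by $V$. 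Carrying this through the telescoping perturbation argument gives a sensitivity of order $(L+1)(d+1)(s+2)^L\,\eta$, so $\log(1/\eta) \lesssim \log(L+1) + \log(d+1) + L\log(s+2) - \log\delta$, and now the $\log d$ appears additively rather than multiplied by $L$. With $\log(L+1) \leq L \lesssim L\log s$ (valid for $s\geq 2$) and $\log T \lesssim \log d + \log L + \log s$, everything collapses to $Cs(L\log s + \log d - \log\delta)$ as claimed. In short, your bound $V^{L+1}$ is a valid but exponentially lossy over-estimate when $d\gg s$; the fix is to keep the single factor $(d+1)$ out of the repeated product, which is precisely what the cited Remark 1 of \cite{ref.SH} does (there $\log \prod_l(p_l+1) \leq \log(2(d+1)) + L\log(s+1)$). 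A secondary, more cosmetic point: your construction produces a net for the ambient class $\mF(L,\bp,s)$, not for the arbitrary subset $\mF$ appearing in the statement; the paper handles this by covering the ambient class at radius $\delta/2$ and invoking \cite[Exercise 4.2.9]{ref.Vershynin}, and you should insert the same reduction.
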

\begin{proof}
  Note that $\mN (\delta, \mF, \|\cdot \|_\infty) \leq \mN (\tfrac{1}{2} \delta, \mF(L, \bp, s), \|\cdot \|_\infty)$ by Exercise 4.2.9 of \cite{ref.Vershynin}. Thus, we have by Lemma 5 and Remark 1 in \cite{ref.SH}
  \begin{align*}
  \log\mN (\delta, \mF, \|\cdot \|_\infty)\leq(s+1)\log\bra{2^{2L+6}\delta^{-1}(L+1)d^2s^{2L}}.
  \end{align*}
  Then, using $\log s\geq\log2$ and $\log(L+1)\leq L$, we obtain the desired result. 
\end{proof}

\begin{proof}[Proof of Theorem \ref{thm.oracle_ineq}]
%
We apply Lemmas \ref{lem.oracle_ineq} and \ref{lem.oracle_ineq2} with $\de = (n\De)^{-1}$ and $a_n = \frac{1}{C_{\be} \De} \log (n\De)$. We may assume $\mu_n:=\lfloor n/2a_n\rfloor>0$ without loss of generality; otherwise, $n< 2a_n$ and thus $\frac{\log(n\De)}{n\De}>C_\beta/2$, so the desired bound holds with $C_\tau=2/C_\be$. 
Hence, noting $\De\leq1$ and $F\geq1$, we obtain by Lemmas \ref{lem.oracle_ineq} and \ref{lem.oracle_ineq2}
\begin{multline*}
\eR{f}\leq\tau\bra{\PsiFn{\hf} + \inf_{f\in \mF} \eR{f} }
+c_1 F^2\left(\frac{\log\mathcal N_n}{\mu_n}+\beta_X(a_n\De)\right.\\
\left.+\frac{\log\mathcal N_n}{n\De}
+\De+\frac{1}{\sqrt{n\De}} \int_0^\de \sqrt{\log \mN (u, \mF, \norm{\cdot}_\infty)} du
\right),
\end{multline*}
where $\mF:=\mF(L, \bp, s, F)$, $\mathcal N_n:=\mN (\delta, \mF, \norm{\cdot}_\infty)\vee\lceil\exp(7.4)\rceil$ and $c_1>0$ is a constant depending only on $(C_{b}, C_{b}', C_\Sigma, C_\Sigma', \tau)$. 
Now, Lemma \ref{lem.entropy} and Assumption \ref{ass.SDE6} yield
\[
\frac{\log\mathcal N_n}{\mu_n}+\frac{\log\mathcal N_n}{n\De}
\leq c_2\frac{s(L\log s +\log(n\De))\cdot\log(n\De)}{n\De},
\]
where $c_2>0$ depends only on $C_\beta$. 
Also, Assumption \ref{ass.mixing} yields $\beta_X(a_n\De)\leq C_\beta'(n\De)^{-1}$. 
Moreover, we have by Lemma \ref{lem.entropy}
\begin{align*}
  & \int_0^{(n\De)^{-1}} \sqrt{\log \mN (u, \mF, \norm{\cdot}_\infty)} du \\
  & \qquad \leq c_3\int_0^{(n\De)^{-1}} \sqrt{s(L\log s + \log u^{-1})} du 
   = c_3\int_{n\De}^\infty \frac{1}{r^2} \sqrt{s(L\log s + \log r)} dr \\
  & \qquad \leq c_3\int_{n\De}^\infty \bra{ \frac{1}{r^2} \sqrt{s(L\log s + \log r)} - \frac{s}{2r^2 \sqrt{s(L\log s + \log r)}} } dr + c_3\int_{n\De}^\infty \frac{\sqrt{s}}{r^2} dr \\
  & \qquad = c_3\left[ -r^{-1} \sqrt{s(L\log s + \log r)} \right]_{n\De}^\infty + c_3\left[ -\sqrt{s} r^{-1} \right]_{n\De}^\infty \\
  & \qquad = c_3\frac{\sqrt{s(L\log s + \log (n\De))} + \sqrt{s}}{n\De},
\end{align*}
where $c_3>0$ is a universal constant. Finally, note that $\eR{f} \leq \norm{f - f_0}_\infty^2$. Combining all the estimates above, we obtain the desired result. 
\end{proof}


\section{Proof of Theorem \ref{thm.main}}
\label{sec.proof_main}

The following result is essentially shown in the proof of Theorem 1 in \cite{ref.SH}. 
\begin{lem}\label{lem:approx}
For any $f\in\mG\big(q, \bd, \bt, \bbeta, K\big)$ there are a function $\widetilde f_n^* \in \mF(L, \bp, s, F)$ and a constant $C>0$ depending only on $(c_L^u, c_{\bp}, c_s^l, c_s^u, q, \bd, \bt, \bbeta, F)$ such that $\|\widetilde f_n^* - f_0\|_\infty^2 \leq C\phi_n$.
\end{lem}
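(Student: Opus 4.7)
The plan is to reduce this lemma directly to the neural network approximation construction carried out in the proof of Theorem~1 of \cite{ref.SH}, with the correspondence that the effective sample size in our discretely observed diffusion setting is $n\Delta$ rather than $n$. Since the statement is purely about approximation in sup-norm of functions in $\mG(q,\bd,\bt,\bbeta,K)$ by ReLU networks, it is a deterministic function-approximation result and not affected by the stochastic structure of the problem; only the scaling parameter changes from $n$ to $n\Delta$.

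First I would recall the construction in \cite{ref.SH}. For $f=g_q\circ\cdots\circ g_0\in\mG(q,\bd,\bt,\bbeta,K)$, each coordinate function $g_{ij}$ is $\beta_i$-Hölder in at most $t_i$ variables. The key building block is a neural network approximation of $t_i$-variate $\beta_i$-smooth functions (Proposition~3 or Lemma~5 in \cite{ref.SH}), which produces, for any integer $N_i\ge1$, a network with depth $O(\log N_i)$, width $O(N_i)$, and $O(N_i\log N_i)$ non-zero parameters that approximates $g_{ij}$ uniformly at rate $N_i^{-\beta_i/t_i}$. Composing the sub-networks for $i=0,\dots,q$ and using the contraction inequality for compositions of Hölder functions gives a total uniform error of order $\max_i N_i^{-\beta_i^*/t_i^*}$ where $\beta_i^*$ is defined via the product $\prod_{\ell=i+1}^q(\beta_\ell\wedge 1)$ as in the statement.

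Next I would optimize the per-layer integers $N_i$ to balance the error against the architectural budget. The choice $N_i\asymp (n\Delta\phi_n)^{t_i/(2\beta_i^*+t_i)}$ (or equivalently any choice making $N_i^{-2\beta_i^*/t_i}\le c\phi_n$ while the total network budget stays within the limits of conditions (ii)--(iv) of Theorem~\ref{thm.main}) produces an approximation $\widetilde f_n^*$ with $\|\widetilde f_n^*-f\|_\infty^2\le C\phi_n$. The depth condition $11q+8+c_L^l\log_2(n\Delta\phi_n)\le L\le c_L^u n\Delta\phi_n$, the width condition $c_{\bp}n\Delta\phi_n\le\min_ip_i$, and the sparsity condition $c_s^l n\Delta\phi_n\log(n\Delta)\le s\le c_s^u n\Delta\phi_n\log(n\Delta)$ are exactly the analogues with $n\leftrightarrow n\Delta$ of the conditions (5.1)--(5.3) in \cite{ref.SH} and are designed to host this construction. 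The uniform bound $|\widetilde f_n^*|\le F$ is enforced by the final clipping/truncation step; this is permissible because $F\ge K\vee 1$ and $\|f\|_\infty\le K$ by definition of the compositional class (boundedness of each $g_i$ is built into the assumption $|l_{i+1}|,|u_{i+1}|\le K$). Multiplication by $\1_{[0,1]^d}$ is automatic from the definition of $\mF(L,\bp,s,F)$ in \eqref{eq.defi_bd_sparse_para_space}, so $\widetilde f_n^*\in\mF(L,\bp,s,F)$ by construction.

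The main obstacle is not mathematical but bookkeeping: one must carefully check that every numerical constant in Schmidt-Hieber's construction (the constants in front of $L$, $\bp$, and $s$, and the restrictions on the smoothness and sparsity parameters) indeed translates correctly under the substitution $n\mapsto n\Delta$, and that the $(11q+8)$ additive overhead in the depth condition matches the depth used for parallelizing the $q+1$ sub-networks and for the polynomial multiplication gadgets in \cite{ref.SH}. Since these verifications are purely arithmetic and the construction proceeds verbatim, the proof reduces to citing the relevant lines of the proof of Theorem~1 in \cite{ref.SH}; no new idea is required.
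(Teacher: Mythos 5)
Your proposal follows essentially the same route as the paper's proof: reduce to Schmidt-Hieber's Theorem 1 construction with $n$ replaced by $n\De$, compose the per-layer ReLU approximations, verify the architecture budget against conditions (ii)--(iv), and rescale to enforce the $F$-bound, with the $\1_{[0,1]^d}$ factor built into the definition of $\mF(L,\bp,s,F)$. One concrete slip: the per-layer choice $N_i\asymp(n\De\phi_n)^{t_i/(2\beta_i^*+t_i)}$ does \emph{not} in fact satisfy your own hedging criterion $N_i^{-2\beta_i^*/t_i}\lesssim\phi_n$ (plugging in, the exponent comes out strictly smaller in magnitude than what is needed at the worst index $i^*$). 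The paper instead takes a single \emph{uniform} $N:=\lceil c\,n\De\phi_n\rceil$ for every layer, which gives $N^{-2\beta_{i^*}^*/t_{i^*}}=\phi_n$ exactly and, since $i^*$ minimizes $\beta_i^*/t_i$, also bounds all other layers; this uniform choice is what actually fits within the width and sparsity budgets of conditions (iii)--(iv). Apart from this, the reduction is correct and the remaining verification is indeed the bookkeeping you describe.
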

We give a proof of this lemma in Section \ref{proof:lem:approx} for the sake of completenss.

\begin{proof}[Proof of Theorem \ref{thm.main}]
Assumption \ref{ass.SDE6} and condition (iv) imply that
\[
\frac{s (L\log s + \log (n\De)) \cdot \log (n\De)}{n\De}\leq c\phi_nL\log^3(n\De),
\]
where $c>$ depends only on $(c_s^u,q,\bt,\bbeta)$. Combining this with Theorem \ref{thm.oracle_ineq} and Lemma \ref{lem:approx}, we obtain the desired result. 
\end{proof}

\section{Proof of Lemma \ref{lemma:beta}}\label{sec:lemma-beta}

We begin by introducing some general notation. Given a Markov kernel $\mathcal P$ on $\mathbb R^d$ and a probability measure $\mu$ on $\mathbb R^d$, we define the probability measure $\mu\mathcal P$ on $\mathbb R^d$ by $(\mu\mathcal P)(\cdot)=\int_{\mathbb R^d}\mathcal P(x,\cdot)\mu(dx)$. 
The total variation measure of a signed measure $\nu$ is denoted by $|\nu|$. 
For $x\in\mathbb R^d$, $\delta_x$ denotes the Dirac measure at $x$. 

For the proof, we apply Theorem 1.3 in \cite{HaMa11}. 
Let $(P_b^t)_{t\geq0}$ be the transition semigroup associated with the SDE \eqref{eq:sde}. 
First we check Assumption 1 in \cite{HaMa11} (geometric drift condition). Fix a $C^\infty$ function $\kappa:\mathbb R\to[0,1]$ such that $\kappa(x)=0$ for $x\leq1$ and $\kappa(x)=1$ for $x\geq2$. Define the function $V:\mathbb R^d\to(0,\infty)$ by $V(x)=\exp(r|x|\kappa(|x|))$ for $x\in\mathbb R^d$. $V$ is evidently a $C^\infty$ function. Moreover, let $\mathcal L$ be the generator associated with the SDE \eqref{eq:sde}. Then, for any $x\in\mathbb R^d$ with $|x|>2d$, we have
\begin{align*}
\mathcal LV(x)&=r V(x)\left\{\frac{\langle b(x),x\rangle}{|x|}+\frac{r}{2}\bra{r+\frac{d-1}{|x|}}\right\}\\
&\leq rV(x)\left\{-r+\frac{r}{2}\bra{r+\frac{1}{2}}\right\}
\leq-\frac{r^2}{4}V(x).
\end{align*}
Therefore, setting $a:=-r^2/4$ and 
\[
C_0:=\bra{1+a}\sup_{x\in\mathbb R^d:|x|\leq2d}\bra{\sqrt{d}F|\nabla V(x)|+\frac{1}{2}|\nabla\cdot\nabla V(x)|}<\infty,
\]
we obtain
\[
\mathcal LV(x)\leq-a V(x)+C_0\qquad\text{for any }x\in\mathbb R^d.
\]
Hence, we deduce from Example 3.2.5 in \cite{Ku18}
\begin{equation}\label{eq:lyapunov}
\int_{\mathbb R^d}V(y)P_{b}^t(x,dy)\leq e^{-at}V(x)+C_0\frac{1-e^{-at}}{a}
\end{equation}
for any $x\in\mathbb R^d$ and $t>0$.
Next we check Assumption 2 in \cite{HaMa11} (minorization condition). Set 
\[
R:=\frac{3C_0}{a}\qquad\text{and}\qquad
\mathcal C:=\{x\in\mathbb R^d:V(x)\leq R\}.
\]
Note that $\mathcal C$ is compact. 
Theorem 2 in \cite{QiRuZh03} implies that, for any $x,y\in\mathbb R^d$,
\begin{align*}
p_b^1(x,y)\geq\frac{1}{(2\pi)^{d/2}}e^{-\frac{(|x-y|+\|b\|)^2}{2}-\|b\|(\zeta_d+|x-y|)},
\end{align*}
where $p_b^1(x,\cdot)$ is a density of $P_b^1(x,\cdot)$, $\|b\|:=\sup_{x\in\mathbb R^d}|b(x)|$ and $\zeta_d$ is a positive constant depending only on $d$. Since
\begin{align*}
&\frac{(|x-y|+\|b\|)^2}{2}+\|b\|(\zeta_d+|x-y|)\\
&=\frac{|x-y|^2+\|b\|^2}{2}+\|b\|\zeta_d+2\|b\||x-y|\\
&\leq|x-y|^2+\|b\|^2\bra{\frac{5}{2}+\zeta_d}\leq 2|x|^2+2|y|^2+dK^2(3+\zeta_d),
\end{align*}
we obtain
\begin{equation*}
p_b^1(x,y)\geq\varphi(y)\cdot2^{-d/2}e^{-2|x|^2-dK^2(3+\zeta_d)},
\end{equation*}
where $\varphi$ is the density of the $d$-dimensional normal distribution with mean 0 and covariance matrix $2^{-1}I_d$. Therefore, letting
\[
\alpha:=\inf_{x\in\mathcal C}2^{-d/2}e^{-2|x|^2-dK^2(3+\zeta_d)}\in(0,1),
\]
we obtain
\begin{equation}\label{eq:minorization}
\inf_{x\in\mathcal C}p_b^1(x,y)\geq\alpha\varphi(y)\qquad\text{for any }y\in\mathbb R^d,
\end{equation}
which yields Assumption 2 in \cite{HaMa11}. Consequently, we have by Theorem 1.3 in \cite{HaMa11}
\begin{equation}\label{eq:contraction}
\rho_\beta(\mu_1P_b^1,\mu_2P_b^1)\leq\bar\alpha\rho_\beta(\mu_1,\mu_2)
\end{equation}
for any probability measures $\mu_1$ and $\mu_2$ on $\mathbb R^d$, where $\beta>0$ and $\bar\alpha\in(0,1)$ depend only on $r,d,K$ (and $\kappa$), and
\[
\rho_\beta(\mu_1,\mu_2):=\int_{\mathbb R^d}(1+\beta V(x))|\mu_1-\mu_2|(dx).
\]
Now, applying \eqref{eq:contraction} repeatedly, we obtain
\begin{equation}\label{eq:geom}
\rho_\beta(\mu_1P_b^n,\mu_2P_b^n)\leq\bar\alpha^n\rho_\beta(\mu_1,\mu_2)\qquad\text{for }n=1,2,\dots.
\end{equation}
Therefore, for any integer $n\geq1$,
\begin{align*}
\beta_X(n)&=\sup_{t\geq0}\int_{\mathbb R^d}\|\delta_xP_b^n-\mathcal L_\eta P_b^{t+n}\|\mathcal L_\eta P_b^t(dx)\\
&\leq\sup_{t\geq0}\int_{\mathbb R^d}\rho_\beta(\delta_xP_b^n,(\mathcal L_\eta P_b^t)P_b^{n})\mathcal L_\eta P_b^t(dx)\\
&\leq\bar\alpha^n\sup_{t\geq0}\int_{\mathbb R^d}\rho_\beta(\delta_x,\mathcal L_\eta P_b^t)\mathcal L_\eta P_b^t(dx)\\
&\leq2\bar\alpha^n\sup_{t\geq0}\bra{1+\beta \int_{\mathbb R^d}V(x)\mathcal L_\eta P_b^t(dx)}\\
&\leq2\bar\alpha^n\bra{1+\beta \int_{\mathbb R^d}V(x)\mathcal L_\eta (dx)+\frac{C_0}{a}},
\end{align*}
where the first equality follows from \cite[Proposition 1]{Da73} and the last inequality follows from \eqref{eq:lyapunov}, respectively. Finally, note that $\beta_X(t)\leq\beta_X(\lfloor t\rfloor)$ for any $t\geq0$. Thus we complete the proof of \eqref{eq:minimax-beta}. \qed

\section{Proof of Theorem \ref{thm:minimax}}\label{sec:proof-minimax}

Throughout this section, we write $\mathcal B=\mathcal{B}^{\mathsf i}(r, q, \bd, \bt, \bbeta, K)$ and $\|\cdot\|_2=\|\cdot\|_{L^2([0,1]^d)}$ for short. 
We begin by reducing the problem to establishing a lower bound on the minimax $L^2$-estimation error. 
\begin{lem}\label{lem:reduce-to-l2}
There is a constant $c>0$ such that
\begin{equation}\label{reduce-to-l2}
\liminf_{n\to\infty}\inf_{\hf}\sup_{b\in\mathcal B}\phi_n^{-1}\eR{\hf}
\geq c\liminf_{n\to\infty}\inf_{\hf}\sup_{b\in\mathcal B}\phi_n^{-1}\E{\|\hf-f_0\|_{2}^2}.
\end{equation}
\end{lem}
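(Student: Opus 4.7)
The plan is to prove the pointwise (in $n$) inequality
\begin{equation*}
\inf_{\hf}\sup_{b\in\mathcal B}\eR{\hf}\geq c_0\,\inf_{\hf}\sup_{b\in\mathcal B}\E{\|\hf-f_0\|_2^2}-\frac{C}{n\De}
\end{equation*}
for constants $c_0,C>0$ independent of $n$, $\hf$, and $b$. Since $n\De\to\infty$, the desired $\liminf$ inequality then follows from superadditivity of $\liminf$. First I would show that we may restrict attention to estimators with $|\hf|\leq K$ and $\supp\hf\subset[0,1]^d$. Indeed, since $|f_0|\leq K$ and $\supp f_0\subset[0,1]^d$, the truncation $\tilde f:=((\hf\wedge K)\vee(-K))\1_{[0,1]^d}$ satisfies $|\tilde f-f_0|\leq|\hf-f_0|$ pointwise, so the infimum over all $\hf$ for both risks equals that over such truncated estimators. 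Let $h:=\hf-f_0$, so that $|h|\leq 2K$ and $\supp h\subset[0,1]^d$.

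The next step is to compare $\eR{\hf}$ with the $L^2(\Pi)$-risk $\E{\int h^2\,d\Pi}$, where $\Pi=\Pi_b$ is the invariant measure of \eqref{eq:sde}. Independence of $X'$ from $X$ yields
\begin{equation*}
\eR{\hf}=\frac{1}{n}\sum_{k=0}^{n-1}\E{\int h^2\,d(\eta P_b^{k\De})}.
\end{equation*}
Adapting \eqref{eq:geom} and noting that the weighted metric $\rho_\beta$ dominates the total variation norm (since $1+\beta V\geq1$), I would obtain $\|\eta P_b^{k\De}-\Pi\|_{TV}\leq C'\bar\alpha^{\lfloor k\De\rfloor}$, with constants uniform in $b\in\mathcal B$. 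A counting argument (each integer $m\geq0$ is hit by at most $\lceil1/\De\rceil$ values of $k$) gives $\frac{1}{n}\sum_k\|\eta P_b^{k\De}-\Pi\|_{TV}\leq C''/(n\De)$; using $h^2\leq 4K^2$ and $\supp h\subset[0,1]^d$, this implies $\eR{\hf}\geq\E{\int h^2\,d\Pi}-4K^2 C''/(n\De)$.

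The final step is a uniform lower bound $\pi_b(y)\geq c_0>0$ for $y\in[0,1]^d$ and all $b\in\mathcal B$, where $\pi_b$ is the Lebesgue density of $\Pi$. This combines two facts from the proof of Lemma \ref{lemma:beta}: the minorization $p_b^1(x,y)\geq\alpha\varphi(y)$ for $x\in\mathcal C$ (with $\alpha>0$ uniform in $b$), and $\Pi(\mathcal C)\geq 2/3$ obtained by applying Markov's inequality to the Lyapunov bound $\int V\,d\Pi\leq C_0/a$ (recalling $\mathcal C=\{V\leq3C_0/a\}$). Then $\pi_b(y)=\int p_b^1(x,y)\,\Pi(dx)\geq(2\alpha/3)\varphi(y)$, which is uniformly positive on $[0,1]^d$. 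Consequently $\E{\int h^2\,d\Pi}\geq c_0\E{\|h\|_2^2}$, and taking $\sup_b$ then $\inf_{\hf}$ of the assembled inequality yields the claim. The main obstacle is tracking the uniformity over $\mathcal B$ of all constants produced by the Lyapunov--minorization machinery of Lemma \ref{lemma:beta}; since $V$, $\mathcal C$, $\alpha$, and $\bar\alpha$ there depend only on $(r,d,K)$ and the fixed cutoff $\kappa$ rather than on $b$, this is a matter of careful bookkeeping rather than new arguments.
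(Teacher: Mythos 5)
Your proposal is correct and follows essentially the same route as the paper: reduce the generalization error to the stationary $L^2(\Pi_b)$-risk via the uniform geometric ergodicity bound from the proof of Lemma \ref{lemma:beta}, and then uniformly lower-bound the invariant density on $[0,1]^d$ by combining the minorization $p_b^1(x,y)\geq\alpha\varphi(y)$ on $\mathcal C$ with $\Pi_b(\mathcal C)\geq 2/3$ from Markov's inequality applied to the Lyapunov bound $\int V\,d\Pi_b\leq C_0/a$. Your explicit truncation step at the outset is a welcome piece of bookkeeping that the paper leaves implicit when it writes $4K^2$ as an envelope, and your counting bound for $\nmean\bar\alpha^{\lfloor k\De\rfloor}$ is equivalent to the paper's geometric-series estimate $\frac{1}{n\bar\alpha(1-\bar\alpha^\De)}$; otherwise the two arguments coincide.
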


\begin{proof}
Throughout the proof, we will use the same notation as in Section \ref{sec:lemma-beta}. 
First, from the proof of Lemma \ref{lemma:beta} and \cite[Theorem 3.2]{HaMa11}, $P_b^1$ has the invariant distribution $\Pi_b$ for all $b\in\mathcal B_0(d,r,K)$. 
Next, fix an estimator $\hf$ arbitrarily. Define $\tilde f_n:=\{(-K)\vee\hf\}\wedge K\1_{[0,1]^d}$. Since $\|f_0\|_\infty\leq K$ and $\supp (f_0) \subset[0,1]^d$, we have $|\tilde f_n-f_0|\leq|\hf-f_0|$. Hence $\eR{\hf}\geq\eR{\tilde f_n}$
Since
\[
\eR{\tilde f_n}=\nmean\E{\int_{\mathbb R^d}\left\{\int_{\mathbb R^d}|\tilde f_n(y)-f_0(y)|^2P_b^{k\De}(x,dy)\right\}\mathcal L_\eta(dx)},
\]
we have
\begin{align*}
&\left|\eR{\tf}-\E{\int_{\mathbb R^d}|\tf(y)-f_0(y)|^2\Pi_b(dy)}\right|\\
&\leq4K^2\nmean\int_{\mathbb R^d}\|\delta_xP_b^{k\Delta}-\Pi_b\|\mathcal L_\eta(dx)
\leq4K^2\nmean\bar\alpha^{\lfloor k\Delta\rfloor}\int_{\mathbb R^d}\rho_\beta(\delta_x,\Pi_b)\mathcal L_\eta(dx),
\end{align*}
where the last inequality follows from \eqref{eq:geom}. We have
\[
\nmean\bar\alpha^{\lfloor k\Delta\rfloor}
\leq\frac{1}{n\bar\alpha}\sum_{k=0}^{n-1}\bar\alpha^{k\Delta}
\leq\frac{1}{n\bar\alpha(1-\bar\alpha^\Delta)}
\]
and 
\[
\int_{\mathbb R^d}\rho_\beta(\delta_x,\Pi_b)\mathcal L_\eta(dx)
\leq2+\beta\int_{\mathbb R^d}V(x)\mathcal L_\eta(dx)+\beta\int_{\mathbb R^d}V(x)\Pi_b(dx).
\]
One can easily derive the following estimate from \eqref{eq:lyapunov} (cf.~\cite[Proposition 4.24]{Ha18}):
\begin{equation}\label{eq:V-mom}
\int_{\mathbb R^d}V(x)\Pi_b(dx)\leq\frac{C_0}{a}. 
\end{equation}
Combining these estimates, we obtain
\[
\left|\eR{\tf}-\E{\int_{\mathbb R^d}|\tf(y)-f_0(y)|^2\Pi_b(dy)}\right|\leq \frac{C_1}{n(1-\bar\alpha^\Delta)},
\]
where $C_1$ is a constant depending only on $d,r,K$ and $\eta$. Since $(1-\bar\alpha^\Delta)/\Delta\to-\log\bar\alpha$ and $\phi_n^{-1}=o(n\Delta)$ as $n\to\infty$, we conclude
\besn{\label{risk-to-l1}
&\liminf_{n\to\infty}\inf_{\hf}\phi_n^{-1}\sup_{b\in\mathcal B}\eR{\hf}\\
&\geq\liminf_{n\to\infty}\inf_{\hf}\phi_n^{-1}\sup_{b\in\mathcal B}\E{\int_{\mathbb R^d}|\hf(y)-f_0(y)|^2\Pi_b(dy)}\\
&\geq\liminf_{n\to\infty}\inf_{\hf}\phi_n^{-1}\sup_{b\in\mathcal B}\E{\int_{[0,1]^d}|\hf(y)-f_0(y)|^2\Pi_b(dy)}.
}
Now, using the definition of $\Pi_b$, we can easily check that $\Pi_b$ has a density given by
\[
\pi_b(y)=\int_{\mathbb R^d}p_b^1(x,y)\Pi_b(dx),\qquad y\in\mathbb R^d.
\]
We have by \eqref{eq:minorization}
\[
\inf_{y\in[0,1]^d}\pi_b(y)\geq\alpha\inf_{y\in[0,1]^d}\varphi(y)\Pi_b(\mathcal C).
\]
By the Markov inequality and \eqref{eq:V-mom}, we obtain
\begin{align*}
1-\Pi_b(\mathcal C)
=\Pi_b(V>R)
\leq\frac{1}{R}\int_{\mathbb R^d}V(x)\Pi_b(dx)
\leq\frac{1}{3}.
\end{align*}
Hence we conclude
\[
\inf_{y\in[0,1]^d}\pi_b(y)\geq\frac{\alpha}{3}\inf_{y\in[0,1]^d}\varphi(y).
\]
Consequently, there is a constant $c>0$ depending only on $d,r$ and $K$ such that
\begin{equation}\label{pi-bound}
\E{\int_{[0,1]^d}|\hf(y)-f_0(y)|^2\Pi_b(dy)}\geq c\E{\int_{[0,1]^d}|\hf(y)-f_0(y)|^2dy}.
\end{equation}
Combining \eqref{risk-to-l1} with \eqref{pi-bound}, we obtain the desired result.
\end{proof}

To establish a lower bound on the right hand side of \eqref{reduce-to-l2}, we employ the functions in $\ol\mG\big(q, \bd, \bt, \bbeta, K\big)$ constructed in the proof of \cite[Theorem 3]{ref.SH}. Properties of these functions are summarized as follows:
\begin{lem}\label{lem:minimax}
There is a constant $K>0$ having the following property: For any $n\in\mathbb N$ and $\Gamma>0$, there are an integer $M\geq1$ and functions $f_{(0)},\dots,f_{(M)}\in\ol\mG\big(q, \bd, \bt, \bbeta, K\big)$ satisfying the following conditions:
\begin{enumerate}[label=(\roman*)]

\item\label{f-l2} For all $0\leq j<k\leq M$, 
\begin{equation}\label{f-upper}
n\De\|f_{(j)}-f_{(k)}\|_2^2\leq\frac{\log M}{9\Gamma}
\end{equation}
and
\begin{equation}\label{f-lower}
\|f_{(j)}-f_{(k)}\|_2^2\geq\kappa\phi_n,
\end{equation}
where $\kappa>0$ is a constant depending only on $\bt$ and $\bbeta$. 

\item\label{f-smooth} $|f_{(j)}(x)-f_{(j)}(y)|\leq K|x-y|^{\beta^*}$ for all $j=0,1,\dots,M$ and $x,y\in[0,1]^d$. 


\item\label{f-bound} $\|f_{(j)}\|_\infty\leq (2(\beta^*)^{t^*}t^*)^B$ for all $j=0,1,\dots,M$. 

\end{enumerate}
\end{lem}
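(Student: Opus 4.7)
The plan is to adapt the hypothesis construction used in the proof of Theorem~3 of \cite{ref.SH} to the drift estimation setting, with the regression sample size $n$ replaced by the effective sample size $n\De$ and with the Fano slack parameter $\Gamma$ absorbed into the scaling.

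First, I would fix a smooth bump $\psi:\R^{t^*}\to[0,1]$ compactly supported in $(0,1)^{t^*}$, choose an integer $K^*$ of order $(\Gamma n\De)^{1/(2\beta^*_{i^*}+t^*)}$, and place $m:=(K^*)^{t^*}$ scaled translates $\psi_\alpha(x):=h\,\psi(K^*x-\alpha)$, $\alpha\in\{0,\dots,K^*-1\}^{t^*}$, with amplitude $h$ of order $(K^*)^{-\beta_{i^*}}$ chosen so that each $\psi_\alpha$ lies in $\mC^{\beta_{i^*}}_{t^*}([0,1]^{t^*},K)$ for a constant $K$ depending only on $(\bt,\bbeta)$. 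The Varshamov--Gilbert lemma then furnishes $M+1\geq2^{m/8}$ sign vectors $\omega^{(0)},\dots,\omega^{(M)}\in\{0,1\}^m$ with pairwise Hamming distance at least $m/8$.

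Second, I would inject the perturbations $\phi_j:=\sum_\alpha\omega^{(j)}_\alpha\psi_\alpha$ into the $i^*$-th layer of a composition $f_{(j)}=g_q\circ\cdots\circ g_0$, taking the remaining layers as in the proof of \cite[Theorem~3]{ref.SH}: $g_0,\dots,g_{i^*-1}$ as coordinate selections mapping surjectively into $(0,1)^{t^*}$, and $g_{i^*+1},\dots,g_q$ as coordinate-wise $\beta_\ell$-Hölder maps whose Hölder classes match those prescribed by $\bbeta$. A perturbation of amplitude $h$ at the bottleneck then propagates to the top level with amplitude $h^{\gamma}$, where $\gamma:=\prod_{\ell>i^*}(\beta_\ell\wedge1)$, so that $h^\gamma\asymp(K^*)^{-\beta^*_{i^*}}$; combined with the disjointness of the supports, this yields
\[
\|f_{(j)}-f_{(k)}\|_2^2\asymp d_H(\omega^{(j)},\omega^{(k)})\cdot(K^*)^{-2\beta^*_{i^*}-t^*}.
\]
The Hamming lower bound $d_H\geq m/8$ together with the choice of $K^*$ then gives \eqref{f-lower} with a constant $\kappa$ depending only on $(\bt,\bbeta)$, while the matching upper bound $d_H\leq m$ combined with $\log M\gtrsim m$ gives \eqref{f-upper} once $9\Gamma$ is absorbed into the leading constant of $K^*$. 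Items \ref{f-smooth} and \ref{f-bound} follow from standard $C^{\beta^*}$ and $L^\infty$ estimates for $\phi_j$ propagated through at most $q+1$ Hölder layers, and each $f_{(j)}$ extends by zero outside $[0,1]^d$ to an element of $\ol\mG(q,\bd,\bt,\bbeta,K)$ because the bumps $\psi_\alpha$ vanish in a neighbourhood of $\partial[0,1]^{t^*}$.

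The main technical obstacle is the propagation step: verifying that a bump of amplitude $h$ at the bottleneck yields a top-level perturbation of amplitude exactly $h^\gamma$ while each component of every layer remains in $\mC^{\beta_\ell}_{t_\ell}$ with Hölder norm bounded by $K$. This is precisely the calculation driving Schmidt-Hieber's Theorem~3; it requires the hypotheses $\beta^*\geq1$ and $t_j\leq\min_{\ell<j}d_\ell$ assumed in Theorem~\ref{thm:minimax}, and carries over to our setting without change beyond replacing $n$ by $n\De$. The remaining bookkeeping needed to enforce \eqref{f-upper} and \eqref{f-lower} simultaneously for a given $\Gamma$ is then a routine tuning of the leading constant in $K^*$.
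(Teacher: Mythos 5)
The paper offers no proof of this lemma: it simply points to the hypothesis construction in the proof of Theorem 3 of Schmidt-Hieber \cite{ref.SH} and summarizes the properties needed. Your sketch reconstructs exactly that construction (Varshamov--Gilbert bumps at the bottleneck layer $i^*$, propagated through coordinate-wise H\"older layers), with the correct substitution $n\mapsto n\De$ and with the Fano slack $\Gamma$ absorbed into the bandwidth $K^*$, so it is the same approach as the paper. One small wrinkle in both your sketch and the lemma statement itself: once $K^*\asymp(\Gamma n\De)^{1/(2\beta^*_{i^*}+t^*)}$, the separation becomes $\|f_{(j)}-f_{(k)}\|_2^2\gtrsim(K^*)^{-2\beta^*_{i^*}}\asymp\Gamma^{-2\beta^*_{i^*}/(2\beta^*_{i^*}+t^*)}\phi_n$, so the constant $\kappa$ in \eqref{f-lower} in fact depends on $\Gamma$ as well as on $(\bt,\bbeta)$; this is harmless for the application, where $\Gamma$ is a fixed constant determined by $(r,d,\bt,\bbeta)$, but you should not claim that $\kappa$ is $\Gamma$-free.
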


\begin{proof}[Proof of Theorem \ref{thm:minimax}]
For each $b\in \mathcal B$ and $n\in\mathbb N$, we denote by $P_{b,n}$ the law of $\mathbb X_n:=(X_t)_{t\in[0,n\De]}$ induced on $C([0,n\De],\mathbb R^d)$ when $X$ is defined by \eqref{eq:sde}. Here, $C([0,n\De],\mathbb R^d)$ is the space of all continuous functions from $[0,n\De]$ to $\mathbb R^d$ equipped with the supremum norm. 
Moreover, $E_{b,n}[\cdot]$ denotes the expectation under $P_{b,n}$. 

Since the continuous observation model is more informative than the discrete observation model, the right hand side of \eqref{reduce-to-l2} is bounded from below by 
\[
c\liminf_{n\to\infty}\inf_{\hat b^{\mathsf i}_{n\De}}\sup_{b\in\mathcal B}\E{\|\hat b^{\mathsf i}_{n\De} -b^{\mathsf i}\|_{2}^2},
\] 
where the infimum is taken over all estimators $\hat b^{\mathsf i}_{n\De}$ based on the continuous observations $\mathbb X_n$, i.e.~all real-valued random functions $\hat b^{\mathsf i}_{n\De}$ defined on $\mathbb R^d$ such that $(\omega,x)\mapsto\hat b^{\mathsf i}_{n\De}(\omega,x)$ is measurable with respect to the product of the $\sigma$-algebra generated by $\mathbb X_n$ and the Borel $\sigma$-algebra of $\R^d$. 
Therefore, according to Theorem 2.7 in \cite{Ts09} and Lemma \ref{lem:reduce-to-l2}, it suffices to show that there is a constant $K>0$ having the following property: For every $n\in\mathbb N$, there are an integer $M\geq1$ and functions $b_0,b_1,\dots,b_M\in\mathcal{B}$ such that
\begin{equation}\label{tsy-eq1}
\|b^{\mathsf i}_{j}-b^{\mathsf i}_{k}\|_2^2\geq \kappa\phi_n\quad\text{for all }0\leq j<k\leq M
\end{equation}
and
\begin{equation}\label{tsy-eq2}
P_{j}\ll P_{0}\quad\text{for all }j=1,\dots,M
\end{equation}
and
\begin{equation}\label{tsy-eq3}
\frac{1}{M}\sum_{j=1}^M\int\log\frac{dP_{j}}{dP_{0}}dP_j\leq \frac{1}{9}\log M,
\end{equation}
where $\kappa>0$ is a constant independent of $n$ and $P_j:=P_{b_j,n}$ for $j=0,1,\dots,M$. 

Set $B:=\prod_{\ell=i^*+1}^q(\beta_\ell\wedge1)$ and $R:=r\vee(2(\beta^*)^{t^*}t^*)^B$. 
Define
\[
t_0:=\frac{\log 2}{36R^2}\qquad
\text{and}
\qquad
\Gamma:=\left(R+\frac{1}{\sqrt{2\pi t_0}}\right)^d,
\] 
and take the constant $K>0$ appearing in Lemma \ref{lem:minimax}. 
Also, take a $C^\infty$ function $\psi:\mathbb R\to[0,1]$ such that $\psi(x)=1$ for $|x|>2d$ and $\psi(x)=0$ for $|x|\leq d$. 
Increasing the value of $K$ if necessary, we may assume $|\psi(x)-\psi(y)|\leq K|x-y|$ for all $x,y\in\mathbb R^d$. 
Then, given $n\in\mathbb N$, there are an integer $M\geq1$ and functions $f_{(0)},\dots,f_{(M)}\in\ol\mG\big(q, \bd, \bt, \bbeta, K\big)$ satisfying conditions \ref{f-l2}--\ref{f-bound}. 
For each $j=1,\dots,M$, we define the function $b_j:\mathbb R^d\to\mathbb R^d$ as 
\[
b_j(x)=\begin{cases}
f_{(j)}(x)e_{\mathsf i}-r\frac{x}{|x|}\psi(|x|) & \text{if }x\neq0,\\
f_{(j)}(x)e_{\mathsf i} & \text{if }x=0.
\end{cases}
\]
Here, $e_{\mathsf i}$ is the $d$-dimensional unit vector whose $\mathsf i$-th coordinate is 1.
Using \ref{f-smooth}--\ref{f-bound} and properties of $\psi$, one can easily check that $b_0,b_1,\dots,b_M\in\mathcal{B}$ with increasing the value of $K$ if necessary. 
In the following we show that these $b_j$ satisfy \eqref{tsy-eq1}--\eqref{tsy-eq3}. 

First, \eqref{tsy-eq1} immediately follows from \eqref{f-lower}. 
Next, from Section 7.6.4 of \cite{LiSh01}, we have \eqref{tsy-eq2} and
\[
\frac{dP_{j}}{dP_0}(\mathbb{X}_n)=\exp\left\{\int_0^{n\De}\{b_j(X_t)-b_0(X_t)\}^\top dX_t
-\frac{1}{2}\int_0^{n\De}(|b_j(X_t)|^2-|b_0(X_t)|^2)dt\right\}
\] 
for every $j=1,\dots,d$. 
Hence we obtain
\begin{align}
&E_{b_j,n}\left[\log\frac{dP_{j}}{dP_{0}}(\mathbb{X}_n)\right]
\nonumber\\
&=E_{b_j,n}\left[\int_0^{n\De}\langle b_j(X_t)-b_0(X_t), b_j(X_t)\rangle dt
-\frac{1}{2}\int_0^{n\De}(|b_j(X_t)|^2-|b_0(X_t)|^2)dt\right]
\nonumber\\
&=E_{b_j,n}\left[\frac{1}{2}\int_0^{n\De}|b_j(X_t)-b_0(X_t)|^2dt\right]
\nonumber\\
&=\frac{1}{2}E_{b_j,n}\left[\int_0^{n\De}|f_{(j)}(X_t)-f_{(0)}(X_t)|^2dt\right].
\label{eq:kl}
\end{align}
Now, Theorem 1 in \cite{QiRuZh03} implies that
\[
p_j^t(x,y)\leq\prod_{k=1}^d\left(\frac{1}{\sqrt{2\pi t}}\int_{|x_k-y_k|/\sqrt t}^\infty ze^{-(z-R\sqrt t)^2/2}dz\right)
\]
for all $(t,x,y)\in(0,\infty)\times\mathbb R^d\times\mathbb R^d$, where $p_j^t$ are transition densities associated with \eqref{eq:sde} with $b=b_j$. 
Since
\begin{align*}
&\int_{0}^\infty ze^{-(z-R\sqrt t)^2/2}dz\\
&=R\sqrt t\int_{0}^\infty e^{-(z-R\sqrt t)^2/2}dz
-\int_{0}^\infty \frac{d}{dz}e^{-(z-R\sqrt t)^2/2}dz\\
&\leq R\sqrt {2\pi t}
+e^{-R^2t/2},
\end{align*}
we have for any $x,y\in\mathbb R^d$ and $t\geq t_0$
\begin{align*}
p_j^t(x,y)
&\leq\prod_{k=1}^d\left(R+\frac{1}{\sqrt{2\pi t}}\right)
\leq\Gamma.
\end{align*}
Combining this with \eqref{f-upper}, \ref{f-bound}, $\supp(f_{(j)})\subset[0,1]^d$ and Fubini's theorem, we obtain 
\begin{align*}
&E_{b_j,n}\left[\int_0^{n\De}|f_{(j)}(X_t)-f_{(0)}(X_t)|^2dt\right]\\
&=E_{b_j,n}\left[\int_0^{t_0}|f_{(j)}(X_t)-f_{(0)}(X_t)|^2dt\right]
+\int_{\mathbb R^d\times\mathbb R^d}\left\{\int_{t_0}^{n\De}|f_{(j)}(y)-f_{(0)}(y)|^2p^t_j(x,y)dt\right\}\mathcal L_\eta(dx)dy\\
&\leq4R^2t_0+\frac{\log M}{9}.
\end{align*}
Note that \eqref{f-upper} and \eqref{f-lower} imply $M\geq2$. Thus, we have by the definition of $t_0$
\begin{align*}
E_{b_j,n}\left[\int_0^{n\De}|f_{(j)}(X_t)-f_{(0)}(X_t)|^2dt\right]
\leq\frac{2\log M}{9}.
\end{align*}
Combining this with \eqref{eq:kl}, we deduce \eqref{tsy-eq3}. This completes the proof. 
\end{proof}


\appendix
\section{Appendix}

\subsection{Proof of Remark \ref{rem:mixing}(a)}\label{proof:rem:mixing}

By \cite[Proposition 1]{Da73}, we have for any $t\geq0$
\begin{align*}
\beta_X(t)&=\sup_{s\geq0}\int_{\mathbb R^d}\|P^t(x,\cdot)-\mathcal L_\eta P^{s+t}\|\mathcal L_\eta P^s(dx).
\end{align*}
Hence
\begin{align*}
\beta_X(t)\leq\sup_{s\geq0}\int_{\mathbb R^d}\|P^t(x,\cdot)-\Pi\|\mathcal L_\eta P^s(dx)
+\sup_{s\geq0}\|\Pi-\mathcal L_\eta P^{s+t}\|.
\end{align*}
For any Borel set $A\subset\mathbb R^d$,
\[
P^t(x,A)-\Pi(A)=\int_{\mathbb R^d}\{P^{t/2}(y,A)-\Pi(A)\}P^{t/2}(x,dy).
\]
Hence, for any $s\geq0$,
\begin{align*}
&\int_{\mathbb R^d}\|P^t(x,\cdot)-\Pi\|\mathcal L_\eta P^s(dx)\\
&\leq\int_{\mathbb R^d}\left\{\int_{\mathbb R^d}\|P^{t/2}(y,\cdot)-\Pi\|P^{t/2}(x,dy)\right\}\mathcal L_\eta P^s(dx)\\
&=\int_{\mathbb R^d}\left\{\int_{\mathbb R^d}\|P^{t/2}(y,\cdot)-\Pi\|P^{t/2+s}(x,dy)\right\}\mathcal L_\eta(dx)\\
&\leq2\int_{\mathbb R^d}\|P^{t/2+s}(x,\cdot)-\Pi\|\mathcal L_\eta(dx)
+\int_{\mathbb R^d}\|P^{t/2}(y,\cdot)-\Pi\|\Pi(dy)\\
&\leq2e^{-\rho (t/2+s)}\int_{\mathbb R^d}V(x)\mathcal L_\eta(dx)
+e^{-\rho t/2}\int_{\mathbb R^d}V(x)\Pi(dy).
\end{align*}
Also, for any Borel set $A\subset\mathbb R^d$,
\[
\Pi(A)-\mathcal L_\eta P^{s+t}(A)=\int_{\mathbb R^d}\{\Pi(A)-P^{s+t}(x,A)\}\mathcal L_\eta(dx).
\]
Hence
\[
\|\Pi-\mathcal L_\eta P^{s+t}\|\leq\int_{\mathbb R^d}\|\Pi-P^{s+t}(x,\cdot)\|\mathcal L_\eta(dx)
\leq e^{-\rho (s+t)}\int_{\mathbb R^d}V(x)\mathcal L_\eta(dx).
\]
Consequently, 
\[
\beta_X(t)\leq\left\{3\int_{\mathbb R^d}V(x)\mathcal L_\eta(dx)+\int_{\mathbb R^d}V(x)\Pi(dy)\right\}e^{-\rho t/2}.
\]
So Assumption \ref{ass.mixing} holds with $C_\beta'=3\int_{\mathbb R^d}V(x)\mathcal L_\eta(dx)+\int_{\mathbb R^d}V(x)\Pi(dy)$ and $C_\beta=\rho/2$.
\qed

\subsection{Proof of Lemma \ref{lem:approx}}\label{proof:lem:approx}

We construct $\widetilde f_n^*$ in essentially the same way as in the proof of \cite[Theorem 1]{ref.SH}. 


\noindent\textbf{Step 1}. Let $f$ be of the form \eqref{eq.mult_composite_regression}. Define functions $h_i:[0,1]^{d_i}\to[0,1]^{d_{i+1}}$ $(i=0,\dots,q)$ by
\begin{align*}
  h_0 (x) &:= \frac{g_0 (x)}{2K} + \frac 12, \\
  h_i (x) &:= \frac{g_i(2K x - K\boldsymbol1)}{2K} + \frac 12,\qquad i=1,\dots,q-1, \\
  h_q (x) &:= g_q(2K x -K\boldsymbol1),
\end{align*}
where $\boldsymbol1=(1,\dots,1)^\top\in\mathbb R^d$. One can easily check that
\begin{align}
  f = g_q \circ \cdots \circ g_0 = h_q \circ \cdots \circ h_0
  \label{eq.f=h_comp}
\end{align}
and $h_{ij}\in \mC_{t_i}^{\beta_i}([0,1]^{d_i},Q_i)$ for all $i=0,1,\dots,q$ and $j=1,\dots,d_{i+1}$, where $h_i=(h_{ij})_j$ and 
\[
Q_0=1,\quad
Q_i=(2F)^{\beta_i}~(i=1,\dots,q-1),\quad
Q_q=(2F)^{\beta_q+1}.
\]






\noindent\textbf{Step 2}. For $i=0,1,\dots,q$, define
\begin{align*}
  r_i := d_{i+1}(t_i+\lceil \beta_i\rceil ),\qquad
  m_i := \left\lceil \frac{\beta_i + t_i}{t_i} \log_2 (n\De\phi_n) \right\rceil - 6,
\end{align*}
and set $L'_i:= 8+(m_i+5)(1+\lceil \log_2 (t_i\vee \beta_i) \rceil)$. 
Also, take a sufficiently small $c>0$ so that $c_{\bp} \geq 2c\max_{i=0, \dots, q} 6r_i$, and define $N:=\lceil cn\De\phi_n\rceil$. 
There is a constant $M>0$ depending only on $(c_{\bp}, q, \bd, \bt, \bbeta, F)$ such that
\[
  N \geq \max_{i=0,\dots,q} \left( (\beta_i+1)^{t_i} \vee (Q_i+1)e^{t_i} \right)
  \]
  and
  \begin{align}
  c_{\bp} n\De \phi_n \geq d\vee N\max_{i=0, \dots, q} 6r_i,\label{cond:width}
  \end{align}
  when $n\De\geq M$. Increasing $M$ if necessary, we may also assume that $m_i>0$ for all $i$ when $n\De\geq M$. In the following steps, we always assume $n\De\geq M$ unless otherwise stated. 

%

\noindent\textbf{Step 3}. Applying Theorem 5 in \cite{ref.SH} to each function $h_{ij}$, we can construct a network $h_{ij}^* \in \mF(L'_i, (d_i, 6(t_i+\lceil \beta_i\rceil )N,\ldots,6(t_i+\lceil \beta_i\rceil ) N,1), s_i)$ with 
\begin{equation}\label{cond:s}
s_i \leq 141 (t_i+\beta_i+1)^{3+t_i} N (m_i+6),
\end{equation}
such that
\begin{align}
  \| h_{ij}^* - h_{ij}\|_{\infty}\leq (2Q_i+1)(1+t_i^2 +\beta_i^2)6^{t_i} N 2^{-m_i}+ Q_i 3^{\beta_i} N^{-\frac{\beta_i}{t_i}}.
  \label{eq.proof_mt1}
\end{align}
If $i<q$, we define $\widetilde h_{ij} := 1-\sigma(1-\sigma(h_{ij}^*))$. By construction we have $\widetilde h_{ij} \in \mF(L'_i+2, (t_i, 6(t_i+\lceil \beta_i\rceil )N,\ldots, 6(t_i+\lceil \beta_i\rceil )N,1), s_i+4)$ and $\sigma(\widetilde h_{ij}) = (h_{ij}^*\vee 0) \wedge 1.$ Moreover, since $0\leq h_{ij}\leq1$ by construction, we also have
\begin{align}
  \| \sigma(\widetilde h_{ij}) - h_{ij}\|_{\infty}\leq \| h_{ij}^* - h_{ij}\|_{\infty}.
  \label{eq.proof_mt2}
\end{align}


\noindent\textbf{Step 4}. Let $\widetilde h_i =(\widetilde h_{ij})_{j = 1}^{d_{i+1}}$ for $i=0,\dots,q-1$. Computing the networks $\widetilde h_{ij}$ in parallel, we obtain
\begin{align*}
   \widetilde h_i\in\mF \big(L'_i+2, (d_i, 6r_i N,\ldots,6r_i N,d_{i+1}), d_{i+1}(s_i+4) \big).
\end{align*}
We then define $f_n^* = h_{q1}^* \circ \sigma(\widetilde h_{q-1}) \circ \cdots \circ \sigma(\widetilde h_{0})$. We have by construction (cf.~Section 7.1 in \cite{ref.SH})
\begin{align*}
  f_n^* \in\mF \Big( E, (d,  6r_0 N,\ldots,6 r_q N,1), \sum_{i=0}^{q} d_{i+1}(s_i+4) -4 \Big),
\end{align*}
where $E = \sum_{i=0}^q (L'_i + 2 + 1) - 1 - 2 = 3q+\sum_{i=0}^q L'_i.$ Note that
\begin{align*}
  E
  & \leq 3q + 8(q+1) + \sum_{i=0}^q (m_i+5)(\log_2 (t_i \vee \beta_i) + 2) \\
  & = 11q + 8 + \sum_{i=0}^q (m_i+5) \log_2 (4t_i \vee 4\beta_i) 
   \leq L,
\end{align*}
where the last inequality follows from condition (ii) in Theorem \ref{thm.main}. 
Moreover, in the light of condition (ii) in Theorem \ref{thm.main} and \eqref{cond:s}, we can increase the value of $M$ if necessary and may assume for $n\De\geq M$
\[
 \sum_{i=0}^{q} d_{i+1}(s_i+4) -4+(L-E)d
  \leq c_s^l n\De \phi_n \log (n\De).
\]
Note that $M$ now depends on $(c_L^u, c_{\bp}, c_s^l, c_s^u, q, \bd, \bt, \bbeta, F)$. 
Hence, we obtain $f_n^*\in\mF(L, \bp, s)$ by Eq.(18) in \cite{ref.SH} and conditions (iii)--(iv) in Theorem \ref{thm.main}.

\noindent\textbf{Step 5}. Applying Lemma 3 in \cite{ref.SH} with \eqref{eq.proof_mt1} and \eqref{eq.proof_mt2}, we deduce 
\begin{align*}
  \norm{ f_n^* - f}_\infty^2 
  & \leq C_1\bra{ K \prod_{\ell=0}^{q-1} (2K)^{\beta_{\ell+1}}
  \sum_{i=0}^q \big( N^{-\frac{\beta_i}{t_i}} \big)
  ^{\prod_{\ell = i+1}^q \beta_\ell \wedge 1} }^2 \\
  & \leq C_2\max_{i=0,\dots,q} N^{-\frac{2\beta_i^*}{t_i}}
   \leq C_3\phi_n,
\end{align*}
where $C_1,C_2,C_3>0$ depend only on $(c_{\bp}, q, \bd, \bt, \bbeta,F)$. 

Define $\widetilde f_n^* := f_n^* (\frac{\|f\|_\infty}{\| f_n^* \|_\infty} \wedge 1).$ Since $f \in \mG(q, \bd, \bt, \bbeta, K)$, $\|\widetilde f_n^*\|_\infty \leq \|f\|_\infty \leq \|g_q\|_\infty \leq K \leq F$. 
Moreover, $\widetilde f_n^* \in \mF(L, \bp, s, F).$ Writing $\widetilde f_n^* - f = (\widetilde f_n^*- f_n^*) + ( f_n^* -f),$ we obtain $\|\widetilde f_n^* - f\|_\infty^2 \leq 4 \|f_n^* - f\|_\infty^2\leq4C_3\phi_n$ for $n\De\geq M$. Meanwhile, since $\|\widetilde f - f\|_\infty\leq2F$ for any $\widetilde f\in \mF(L, \bp, s, F)$, there is a constant $C_4>0$ depending only on $(c_L^u, c_{\bp}, c_s^l, c_s^u, q, \bt, \bbeta,F)$ such that $\|\widetilde f - f\|_\infty\leq C_4\phi_n$ for all $\widetilde f\in \mF(L, \bp, s, F)$ and $n\De< M$. This completes the proof.
\qed

%


\subsection{Generalization error bounds in Besov spaces}\label{sec:besov}

For a function $f:[0,1]^d\to\mathbb R$, an integer $r\geq0$ and a vector $h\in\mathbb R^d$, we define the $r$-th difference of $f$ in the direction $h$ by
\begin{align*}
\Delta^r_h(f)(x):=
\begin{cases}
\sum_{j=0}^r\binom{r}{j}(-1)^{r-j}f(x+jh) & \text{if }x,x+rh\in[0,1]^d,\\
0 & \text{otherwise}.
\end{cases}
\end{align*}
Note that, if $x,x+rh\in[0,1]^d$, then $x+jh\in[0,1]^d$ for all $j=0,1,\dots,r$ by convexity. 
Then, for every $p\in(0,\infty]$, we define the $r$-th modulus of smoothness of $f$ in $L^p([0,1]^d)$ as
\[
w_{r,p}(f,t):=\sup\{\|\Delta^r_h(f)\|_p:h\in\mathbb R^d,|h|\leq t\},\qquad t>0.
\]
Here, for a function $g:[0,1]^d\to\mathbb R$, we write
\[
\|g\|_p=\begin{cases}
(\int_{[0,1]^d}|g(x)|^pdx)^{1/p} & \text{if }0<p<\infty,\\
\sup_{x\in[0,1]^d}|g(x)| & \text{if }p=\infty.
\end{cases}
\]
Further, for $\alpha>0$, we set
\[
|f|_{B_{p,q}^\alpha}:=\left(\int_0^\infty[t^{-\alpha}w_{r,p}(f,t)]^q\frac{dt}{t}\right)^{1/q}
\]
for $q\in(0,\infty)$ and
\[
|f|_{B_{p,\infty}^\alpha}:=\sup_{t>0}t^{-\alpha}w_{r,p}(f,t).
\]
For $p,q\in(0,\infty]$ and $\alpha>0$, we define the \textit{Besov space} $B_{p,q}^\alpha([0,1]^d)$ as the set of all functions $f:[0,1]^d\to\mathbb R$ such that
\[
\|f\|_{B_{p,q}^\alpha}:=\|f\|_p+|f|_{B_{p,q}^\alpha}<\infty.
\]

\begin{thm}
  \label{thm:besov}
  Suppose that Assumptions \ref{ass.SDE1}--\ref{ass.SDE6} are satisfied. 
  Consider the $d$-dimensional diffusion process model \eqref{eq.mod} such that $\|f_0\|_{B_{p,q}^\alpha}\leq1$ and $\|f_0\|_\infty\leq K$ for some $p,q\in(0,\infty],\alpha>0$ and $K>0$ with $\alpha>d/p$. 
  Set $N:=(n\De)^{d/(2\alpha+d)}$. 
  Take a positive integer $m$ such that $0<\alpha<\min\{m,m-1+1/p\}$, and set $W_0:=6dm(m+2)+2d$. 
  Then, there exists a constant $c_L^l>0$ depending only on $(d, p,q,\alpha,m)$ such that, if $\hf$ is an estimator taking values in the network class $\mF(L, \bp, s, F)$ satisfying
  \begin{enumerate}[label=(\roman*)]
    \item $F \geq K\vee1,$
    \item $c_L^l\log_2^2 N \leq L\leq c_L^u N $ for some $c_L^u>0$,
    \item $NW_0 \leq \min_{i=1, \ldots, L} p_i,$
    \item $s=2c_s\{((L-1)W_0^2+1)N+12L\}$ for some $c_s\geq1$,
  \end{enumerate}
  then
  \begin{align*}
    \eR{\hf}
    \leq 2\Psin{\hf} + C\bra{(n\De)^{-\frac{2\alpha}{2\alpha+d}} L^2 \log^2 (n\De) + \De},
  \end{align*}
  where $C>0$ is a constant depending only on 
  \[
  (C_{b}, C_{b}', C_\Sigma, C_\Sigma', C_\be, C'_\be,  c_L^u,c_s, d, p,q,\alpha,m, F).
  \]
  In particular, if $\hf$ be a minimizer of $\eQ{f}$ over $f\in\mF(L, \bp, s, F)$ and $L=O(\log_2^2(n\De))$, $n\De\to\infty$ and $n\De^2\to0$ as $n\to\infty$, then
\[
    \eR{\hf} = O\bra{(n\De)^{-\frac{2\alpha}{2\alpha+d}}  \log^6 (n\De)}\qquad\text{as }n\to\infty.
\]
\end{thm}

\begin{rem}
The assumption $\alpha>d/p$ is stronger than the corresponding one in Theorem 2 in \cite{ref.Suzuki} ($\alpha>d(1/p-1/2)_+$ is assumed ibidem). 
Note that, however, we typically need to assume a stronger condition $\alpha\geq d/p+1$ to ensure the Lipschitz continuity of the drift function (cf.~Proposition 1 in \cite{SuNi19}), so this difference does not matter in practice. 
\end{rem}

\begin{proof}[Proof of Theorem \ref{thm:besov}]
By Proposition 1 in \cite{ref.Suzuki} (with $s=\alpha$ and $r=\infty$) and Remark \ref{rem:dnn-b}, there exist positive constants $c_L^l$ and $C_0$ depending only on $(d, p,q,\alpha,m)$ such that
\[
\inf_{f\in \mF(L, \bp, s)} \norm{f - f_0}_\infty\leq C_0N^{-\alpha/d}=C_0(n\De)^{-\alpha/(2\alpha+d)}
\]
when the network parameters $L,\bp$ and $s$ satisfy conditions (ii)--(iv). Then, by the same argument as in Step 5 of Section \ref{proof:lem:approx}, we obtain
\[
\inf_{f\in \mF(L, \bp, s,F)} \norm{f - f_0}_\infty\leq 2C_0(n\De)^{-\alpha/(2\alpha+d)}.
\]
Combining this with Theorem \ref{thm.oracle_ineq} gives the first claim. 
The second claim is an immediate consequence of the first one. 
\end{proof}

\paragraph{Acknowledgments.}

The authors are grateful to an anonymous referee for his/her constructive comments. 
The authors also thank Kengo Kamatani for his helpful comments. 
Yuta Koike's research was partly supported by JST CREST Grant Number JPMJCR2115 and JSPS KAKENHI Grant Numbers JP17H01100, JP19K13668, JP22H00834, JP22H01139. 



\end{document}